\documentclass[11pt,a4paper]{article}
\usepackage{amsmath,amsfonts,amssymb,amsthm,amsbsy}
\usepackage{appendix}
\usepackage{mathrsfs}
\usepackage{enumitem}
\usepackage[colorlinks,linkcolor=blue,anchorcolor=blue,citecolor=blue]{hyperref}
\usepackage{color,xcolor}
\usepackage{tabularx}
\usepackage{authblk}
\usepackage{bbm}
\usepackage{algorithm,algorithmic}
\usepackage{breakcites}
\usepackage{graphicx}

\makeatletter
\newenvironment{breakablealgorithm}
  {
    \begin{center}
      \refstepcounter{algorithm}
      \hrule height.8pt depth0pt \kern2pt
      \parskip 0pt
      \renewcommand{\caption}[2][\relax]{
        {\raggedright\textbf{\fname@algorithm~\thealgorithm} ##2\par}%
        \ifx\relax##1\relax 
          \addcontentsline{loa}{algorithm}{\protect\numberline{\thealgorithm}##2}%
        \else 
          \addcontentsline{loa}{algorithm}{\protect\numberline{\thealgorithm}##1}%
        \fi
        \kern2pt\hrule\kern2pt
     }
  }
  {
     \kern2pt\hrule\relax
   \end{center}
  }
\makeatother

\usepackage[left=1in,right=1in,top=1.2in,bottom=1.2in]{geometry}
\numberwithin{equation}{section}
\newtheorem{definition}{Definition}[section]
\newtheorem{theorem}{Theorem}[section]
\newtheorem{lemma}{Lemma}[section]
\newtheorem{remark}{Remark}[section]
\newtheorem{corollary}{Corollary}[section]

\newtheorem{proposition}{Proposition}[section]

\newcommand{\Y}{\mathcal{Y}}
\newcommand{\mloc}{M^\mathrm{ loc } }

\title{Minimax Rates for Robust Gaussian Mean Estimation \\ Near the Breakdown Point under Constraints}
\author[*]{Tuorui Peng}
\author[*]{Matey Neykov}
\affil[*]{Department of Statistics and Data Science, Northwestern University}
\date{\today}

\begin{document}

\maketitle

\begin{abstract}
  We study the mean estimation of Gaussian location model with star-shaped constraints under adversarial contamination, where at most $ k<N/2 $ among $ N $ observations are corrupted by an oracle adversary. We characterize the minimax squared $ \ell_2 $ risk throughout the full range of contamination level $ \varepsilon = k/N \in [0,1/2) $. The minimax risk is
  \begin{align*}
    \mathfrak{M}\asymp \big( \delta _\varepsilon ^{*2} \vee \sigma ^2\varepsilon ^2\log \frac{e}{1-2\varepsilon } \big) \wedge d^2 ,
  \end{align*}
  where $ \delta _\varepsilon ^* := \sup  \big\{ r \geq 0: (1-2\varepsilon )\frac{Nr ^2}{\sigma ^2} \leq \log M^\mathrm{ loc }_K(r ,c) \big\} $, $ \log\mloc_K (r ,c) $ is the local entropy of constraint set $ K $, $ d $ is the diameter of $ K $ (set $ d=\infty $ if $ K $ is unbounded), $ \sigma ^2 I_n $ is the covariance of the Gaussian noise, and $ c $ is a sufficiently large constant. Our result sheds light on the interaction between contamination error and information theoretic complexity in the regime of proximity to the breakdown point $ \varepsilon \uparrow 1/2 $.
 
\end{abstract}

\tableofcontents

\section{Introduction}

Robust statistics under contamination is a central topic in modern statistics, and is drawing increasing attention from both the statistics and computer science communities \cite{diakonikolas_robustly_2018,prasadan_information_2026}. A fundamental model in this field is the location model with adversarial corruption, where $ k $ in $ N $ samples are corrupted by an adversarial algorithm \cite{diakonikolas_robustly_2018}. Call the contamination fraction $ \varepsilon := k/N $. The fraction $ \varepsilon = 1/2 $ is the breakdown point for unique recovery of the mean parameter: beyond it, two different mean parameters can generate the same corrupted data and are thus indistinguishable. When the contamination level is bounded away from the breakdown point, as assumed in much of the existing literature, the minimax rate is governed by the usual local entropy scale. Work such as \cite{prasadan_information_2026} treats the contamination level $ \varepsilon \leq 1/2-\kappa $ for some fixed $ \kappa >0 $. Crucially, this restriction prevents the analysis of the minimax rate in the regime of proximity to the breakdown point $ \varepsilon \uparrow 1/2 $. The purpose of this paper is to fill this gap and characterize the minimax rate in the full range of $ \varepsilon \in [0,1/2) $.

\subsection{Problem Setup}

We start by introducing the adversarial corruption model as follows, as in \cite{diakonikolas_robustly_2018}[Definition 2.5].

\begin{definition}[Gaussian Location Model under Adversarial Corruption]\label{def:adversarial_corruption_model}
Let clean data 
\begin{align*}
  \tilde{X}_1,\ldots,\tilde{X}_N \mathop{ \sim }\limits^{i.i.d.} \mathcal{N}(\mu,\sigma^2 I_n),\quad \mu \in K\subseteq \mathbbm{R}^n.  
\end{align*}
Then an oracle adversarial algorithm $ \mathcal{C} $ is allowed to examine and then corrupt at most $ k $ clean samples, where $ 0\leq k < \frac{N}{2} $; we write $ \varepsilon := k/N $ as the fraction of corrupted data. The adversary has access to the model parameters, the clean data, and the mapping of the estimator, using auxiliary randomness if needed. The resulting corrupted data is denoted as $ X_{1:N} := (X_1,\ldots,X_N) =\mathcal{C}(\tilde{X}_{1:N}) $.
\end{definition}

Our goal is to characterize the minimax risk of estimating $ \mu $ under the adversarial corruption model, in terms of squared $ \ell_2 $ loss up to absolute constant factors. Define the minimax risk:
\begin{align}\label{equation:minimax_risk}
  \mathfrak{M} = \mathfrak{M}(K,N,k,\sigma) := \inf_{\hat{\mu}}\sup_{\mu\in K}\sup_{\mathcal{C}}\mathbbm{E}_{\mu ,\mathcal{C} }\left[ \left\Vert \hat{\mu}(X_{1:N})-\mu  \right\Vert ^2 \right],
\end{align}
where the risk is defined for given $ K,N,k,\sigma $, and the estimator $ \hat{\mu} $ is allowed to depend on these parameters. The corruption adversary $ \mathcal{C} $ can possibly have auxiliary randomness, and the expectation is taken over both the randomness of the clean data and the adversary.

\subsection{Notation}

Throughout the paper, we use $ n $ for the ambient dimension, $ N $ the sample size, $ k $ the number of corrupted samples, $ \varepsilon := k/N $ the contamination fraction, $ \sigma ^2 I_n $ the covariance of the Gaussian noise, and $ K\subseteq \mathbbm{R}^n $ the parameter constraint set. Define the diameter of $ K $ as $ d:=\operatorname{diam}(K):=\sup_{x,y\in K}\left\Vert x-y \right\Vert $.

For any two real numbers $ a,b $, we define $ a\vee b := \max\{a,b\} $ and $ a\wedge b := \min\{a,b\} $. For any integer $ N $, we define $ [N] := \{1,2,\ldots,N\} $. For any vector $ x\in \mathbbm{R}^n $, we denote its $ \ell_2 $ norm as $ \left\Vert x \right\Vert := \sqrt{\sum_{i=1}^n x_i^2} $. For any matrix $ A $, we denote its transpose as $ A^\top $. Write $ B(x,r):= \{y\in \mathbbm{R}^n: \left\Vert y-x \right\Vert \leq r\} $ as the closed ball centered at $ x $ with radius $ r $. Write $ B_q(R) := \{x\in \mathbbm{R}^n: \left\Vert x \right\Vert _q\leq R\} $ as the $ \ell_q $ ball with radius $ R $ centered at the origin.
For nonnegative quantities $ a,b $ depending on the problem parameters $ n,N,k,\sigma ,K $, we write $ a \lesssim b $ if $ a \leq Cb $ for some universal constant $ C>0 $ independent of $ n,N,k,\sigma ,K $. Similarly, we write $ a \gtrsim b $ if $ a \geq Cb $, and $ a\asymp b $ if both $ a\lesssim b $ and $ a\gtrsim b $. 
For any set $ A $, we denote its indicator function as $ \mathbbm{1}_A(x) = 1 $ if $ x\in A $, and $ 0 $ otherwise. We denote the normal distribution with mean $ \mu  $ and covariance matrix $ \Sigma $ as $ \mathcal{N}(\mu ,\Sigma ) $, and the binomial distribution with parameters $ m,p $ as $ \mathrm{Binomial}(m,p) $. For any two probability measures $ P,Q $, we denote their total variation distance as $ d_\mathrm{ TV }(P,Q) := \sup_{A}|P(A)-Q(A)| $. 

Also, we formalize the definition of two key concepts in this work: star-shaped set and local metric entropy.
\begin{definition}[Star-Shaped Set]
  A set $ K\subseteq \mathbbm{R}^n $ is called star-shaped if there exists a point $ x^*\in K $ such that for any $ x\in K $, the entire line segment connecting $ x^* $ and $ x $ is contained in $ K $. Formally, for any $ t\in [0,1] $, we have $ tx^* + (1-t)x \in K $. Any such point $ x^* $ is called a center of the star-shaped set $ K $.
\end{definition}

\begin{remark}
   Star-shaped sets are a generalization of convex sets. Every convex set is star-shaped,  with every point in the set being a center. However, not every star-shaped set is convex, making the star-shaped set a substantially richer class of sets. An important instance of a star-shaped set is the sparse vector set, which is non-convex but star-shaped with the origin being a center. 
\end{remark}

\begin{definition}[Local Metric Entropy]
  Given a set $ K\subseteq \mathbbm{R}^n $ and $ r >0 $, the $ r  $-packing number of $ K $ is defined as the supremum of the cardinality $ M = M(r ,K) $ of a subset $ \{\nu_1,\ldots,\nu_M\}\subseteq K $ such that $ \left\Vert \nu_i-\nu_j \right\Vert \geq r $ for all $ i\neq j $. Then given any fixed constant $ c>1 $, define the maximal local packing number of $ K $ as
  \begin{align*}
    \mloc_K(r,c):= \mathop{ \sup }\limits_{\nu\in K} M(r/c,K\cap B(\nu,r)),
  \end{align*}
  where we may drop the subscript $ K $ when it is clear from the context. The local metric entropy of $ K $ is then defined as $ \log \mloc_K(r,c) $. We make the convention that $ \log \mloc_K(0,c) := 0$.
\end{definition}

\subsection{Related Work}

The theoretical study of robust statistics has a long history, dating back to Huber's seminal work \cite{huber_robust_1964}. Later, \cite{hampel_general_1971} formalized the notion of breakdown point, and \cite{bickel_festschrift_1982-1} identified $ 1/2 $ to be the natural breakdown point for location estimation. These classical works determined where the estimation problem becomes statistically impossible, while leaving the finite-sample behavior largely unexplored. In recent years, more corruption regimes have been studied, and there have been more works developing both computational and theoretical foundations of robust estimation \cite{chen_general_2016,diakonikolas_robustly_2018,lugosi_robust_2021,kotekalOptimalEstimationNull2025,prasadan_information_2026}. We outline several closely related works in the following.

Robust mean estimation near the breakdown point has been studied in prior work. \cite{chen_general_2016} developed the minimax theoretic framework for Huber's contamination model, and showed how robust testing yields optimal rates across various estimation models. \cite{kotekalOptimalEstimationNull2025} studied the Gaussian mean-shift model and established the minimax rate for the whole range of contamination levels. Their result found a phase transition governed by $ N-2k $, and the established rate can be substantially faster than those under unrestricted contamination. \cite{chen_outlier-robust_2025} focused on adversarial corruption model similar to ours through an overlap-based analysis and considered robust sparse mean estimation. However, their results do not provide a general characterization of the minimax rate under general parameter constraints. Our work further connects the corruption component with the information-theoretic component induced by local geometry, capturing the interaction between the constraint geometry and adversarial corruption.

For mean estimation with constraints under adversarial corruption, we utilize a tournament-on-the-tree framework first developed in \cite{neykovMinimaxRateGaussian2023} and later extended to adversarial corruption in \cite{prasadan_information_2026}. Crucially, \cite{prasadan_information_2026} extended the framework to adversarial corruption with star-shaped constraints, and established the minimax rate for $ \varepsilon $ strictly bounded away from $ 1/2 $. As a result, the rate dependence when approaching the breakdown point was not characterized. 

Going beyond the breakdown point of $ 1/2 $, unique recovery is no longer possible, and list-decodable learning becomes a natural alternative \cite{charikar_learning_2017,diakonikolas_list-decodable_2018}.

\subsection{Contributions and Organization}

Our main contribution is a sharp characterization of the
minimax squared $\ell_2$ risk for Gaussian mean estimation
under adversarial corruption with star-shaped constraints,
uniformly over the full range of unique recovery of the mean parameter, i.e., $\varepsilon \in [0,1/2)$. We extend the fixed-gap results of \cite{prasadan_information_2026} and show that the proximity to the breakdown point affects the minimax rate in two ways: (i) the information-theoretic component is governed by a fixed point equation involving the local metric entropy and the effective sample size $N-2k = N(1-2\varepsilon)$, and (ii) the contamination component exhibits a logarithmic divergence of order $\sigma^2 \varepsilon^2 \log \frac{e}{1-2\varepsilon}$. Our characterization of the minimax rate applies to both bounded and unbounded star-shaped sets, and recovers the existing results when $\varepsilon$ is strictly bounded away from $1/2$.

Our proof establishes matching upper and lower bounds through several key arguments. 
The upper bound is based on the tournament-on-the-tree framework in \cite{prasadan_information_2026}, with careful calibration of constants in the tail probabilities and stopping conditions when $ \varepsilon  $ approaches $ 1/2 $. For the unbounded case, we further assess the localization step and its impact on the risk.
On the lower bound side, we first introduce a posterior symmetrization trick to establish the effective sample size reduction from $ N $ to $ N-2k $ with a proper adversary. This produces the information-theoretic lower bound via a local Fano's argument. Then we construct a truncated maximal-coupling adversary and apply Le Cam's two-point method to establish the contamination lower bound. 
Together, these arguments establish matching bounds up to universal constants independent of contamination level $ \varepsilon $, allowing us to characterize the behavior of the adversarial corruption minimax rate in the full regime of $ \varepsilon \in [0,1/2) $.

The remainder of the paper is organized as follows. \autoref{section:main_results} presents the main results, including the minimax rate and several examples, and outlines the lower- and upper-bound arguments. \autoref{section:discussion} discusses possible extensions and future directions. The appendices provide technical details: \autoref{appendix:tournament_on_tree_framework} provides a brief review of the tournament-on-the-tree framework and several useful properties involved in the analysis; \autoref{appendix:proof_of_examples} derives the rates for the examples in \autoref{subsection:examples}; \autoref{appendix:lower_bound_proof} proves the lower bound and \autoref{appendix:constant_calibration_near_breakdown_point}-\ref{appendix:minimax_upper_bound_unbounded_case} prove the upper bound.

\section{Main Results}\label{section:main_results}

\begin{theorem}[Minimax Rate]\label{main_theorem:minimax_rate}
    Let $ K\subseteq \mathbbm{R}^n $ be a star-shaped set with diameter $ d $ (in the case of unbounded $ K $, we set $ d=\infty $). Set $ 0\leq k < N/2 $ and $ \varepsilon := k/N $. Let samples be drawn following \autoref{def:adversarial_corruption_model}. Then 
    \begin{align}\label{equation:main_result_minimax_rate}
      \mathfrak{M}\asymp \big( \delta _\varepsilon ^{*2} \vee \sigma ^2\varepsilon ^2\log \frac{e}{1-2\varepsilon } \big) \wedge d^2.
    \end{align}
    Here $ \delta _\varepsilon ^* $ is defined as
    \begin{align}\label{equation:critical_local_radius_definition}
      \delta _\varepsilon ^*:= \mathop{ \sup }\big\{ r \geq 0: (1-2\varepsilon )\frac{Nr ^2}{\sigma ^2} \leq \log M^\mathrm{ loc }(r ,c) \big\},  
    \end{align}
    in which $ c $ is a sufficiently large constant.    
\end{theorem}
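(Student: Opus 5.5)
We will prove the two directions separately, establishing a lower bound $\mathfrak{M}\gtrsim (\delta_\varepsilon^{*2}\vee\sigma^2\varepsilon^2\log\frac{e}{1-2\varepsilon})\wedge d^2$ and a matching upper bound.

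\textbf{Lower bound.} It has three pieces.

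First, the trivial bound $d^2$ is obtained by a two-point argument on two points of $K$ at distance $\asymp d$. This is only relevant when $d$ is smaller than the other terms; it is combined with the two bounds below by taking the radius $r\wedge d$.

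Second, for the information-theoretic term we use a \emph{symmetrizing adversary}. Given $\mu$ and the clean data, the adversary replaces $k$ samples with fresh draws from $\mathcal{N}(\mu',\sigma^2 I_n)$, where $\mu'$ is drawn from the same local packing. Then $N-2k$ samples genuinely inform about $\mu$, while the remaining $2k$ samples split evenly between the two hypotheses and are exchangeable. Concretely, we place a uniform prior on a local packing $\{\nu_1,\dots,\nu_M\}\subseteq K\cap B(\nu,r)$ at scale $r/c$, with $M=\mloc(r,c)$. The posterior symmetrization shows that the mutual information between the index and the corrupted data is at most that of $N-2k$ clean samples, namely $\lesssim (N-2k) r^2/\sigma^2$. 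Local Fano then gives risk $\gtrsim r^2/c^2$ whenever $(1-2\varepsilon)Nr^2/\sigma^2\le \frac12\log M$. Taking $r$ close to $\delta^*_\varepsilon$ (with a constant adjustment in $c$, using the monotonicity of local entropy for star-shaped sets) yields $\delta_\varepsilon^{*2}\wedge d^2$.

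Third, for the contamination term we use Le Cam's two-point method. Take $\mu_0,\mu_1\in K$ along a segment through the center, at distance $\Delta=\sigma\varepsilon\sqrt{\log\frac{e}{1-2\varepsilon}}\wedge d$; star-shapedness guarantees such a pair exists for every $\Delta\le d/2$. We then build a truncated maximal coupling. Under each hypothesis, the adversary transports the samples in the non-overlap region of $\mathcal{N}(\mu_0,\sigma^2)$ versus $\mathcal{N}(\mu_1,\sigma^2)$ (projected onto the separating direction) to the midpoint mixture, so that both corrupted laws coincide whenever the number of samples that need moving is at most $k$. The count of such samples is $\mathrm{Binomial}(N,p)$ with $p=d_\mathrm{TV}\asymp \Delta/\sigma$ for small $\Delta$. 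The difficulty is that as $\varepsilon\uparrow 1/2$ the relevant $p$ may approach $1/2$, so the binomial must stay below $k$ with constant probability. The $\log\frac{e}{1-2\varepsilon}$ factor arises from Gaussian tails: the event to be hidden is truncated at threshold $t$ with $\bar\Phi(t)\approx 1-2\varepsilon$-scale slack. I expect this calibration, namely choosing the truncation so that the expected number of moved points is $\le k - C\sqrt{N}$ while the induced shift is $\sigma\varepsilon\sqrt{\log(e/(1-2\varepsilon))}$, to be the main obstacle, especially in the small-$N$ regime where the fluctuations are not negligible.

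\textbf{Upper bound.} We follow the tournament-on-the-tree construction of \cite{prasadan_information_2026}. The pairwise tests are robust median-type tests along the direction $\nu_i-\nu_j$: they count the fraction of projected samples on each side of the midpoint. With $k$ adversarial points, such a test succeeds provided $\Phi(\|\nu_i-\nu_j\|/(2\sigma))-\frac12$ exceeds roughly the corruption-driven imbalance plus the $\sqrt{\log(\cdot)/N}$ fluctuations. A corrupted majority vote only requires the clean proportion above the midpoint to exceed $\frac{1}{2(1-\varepsilon)}$ up to fluctuations. Inverting the Gaussian quantile near $\Phi^{-1}(1-\frac{1-2\varepsilon}{2})$ gives the $\sigma\varepsilon\sqrt{\log\frac{e}{1-2\varepsilon}}$ separation, and the concentration term then scales with the effective sample size $N(1-2\varepsilon)$. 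The tree then descends through local packings, with error probabilities union-bounded against $\mloc$. The descent stops at the first scale where either the entropy condition fails, which gives $\delta^*_\varepsilon$, or the separation drops below the contamination floor. Constants in the stopping rule must be recalibrated uniformly in $\varepsilon$. Finally, for unbounded $K$ we add a preliminary localization step: a coordinatewise or geometric-median estimate lands within $O(\sigma\sqrt{n}\,\cdot)$ of $\mu$, after which the bounded argument applies. We must check that the localization failure probability times the squared error is negligible, which is done by a tail-integration argument.
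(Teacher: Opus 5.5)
Your upper-bound plan follows the paper: the bisector-counting test, an error exponent of order $(1-2\varepsilon)Nr^2/\sigma^2$ above the threshold $\sigma\varepsilon\sqrt{\log(e/(1-2\varepsilon))}$, and two stopping rules on the tree. Your truncated maximal coupling to the midpoint mixture is also the paper's construction.

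The genuine gap is in the information-theoretic lower bound. Your claim that posterior symmetrization bounds the mutual information between $V$ and the corrupted data by that of $N-2k$ clean samples is false. The $2k$ symmetrized samples depend on $(V,V')$ only through the unordered pair $\{V,V'\}$, but for large $k$ they can essentially reveal that pair. Since $H(V\mid\{V,V'\})\le\log 2$, this forces $I(V;X_{1:N})\approx\log M-\log 2$, and Fano's bound $1-(I+\log 2)/\log M$ becomes vacuous. This happens exactly where you need the bound: at $r\approx\delta^*_\varepsilon$ one has $k(r/c)^2/\sigma^2\approx\frac{\varepsilon}{c^2(1-2\varepsilon)}\log M\gg\log M$ as $\varepsilon\uparrow 1/2$.

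The exchangeability must be used on the loss, not on information. Draw $V'$ from the posterior $\pi_Z$ given the $N-2k$ kept samples $Z$. Drawing it from the prior, as your wording allows, breaks the symmetry, since $V$ is informed by $Z$ and $V'$ is not. With the posterior draw, $(V,V',Y)\mid Z$ and $(V',V,Y)\mid Z$ have the same law, so every estimator satisfies
\[
\mathbbm{E}\|\hat\mu-\mu_V\|^2=\tfrac12\mathbbm{E}\big(\|\hat\mu-\mu_V\|^2+\|\hat\mu-\mu_{V'}\|^2\big)\ge\tfrac14\mathbbm{E}\|\mu_V-\mu_{V'}\|^2=\tfrac12R_{\mathrm{clean}}(N-2k,\pi),
\]
where the last step uses that $V,V'$ are i.i.d.\ given $Z$. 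Fano is then applied only to the clean $(N-2k)$-sample Bayes risk. That is the only place the mutual-information bound $(N-2k)r^2/(2\sigma^2)$ legitimately enters.

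You also leave the contamination calibration open. The missing idea is integrality: $1-2\varepsilon=(N-2k)/N\ge 1/N$. The paper takes $\varepsilon_-=\varepsilon-a/\sqrt N$ and chooses $\mu_0,\mu_1$ with $d_{\mathrm{TV}}\le 2\varepsilon_-$, giving separation $\asymp\sigma\sqrt{\log\frac{1}{1-2\varepsilon_-}}$. Hoeffding then gives $\mathbbm{P}(\mathrm{Binomial}(N,\varepsilon_-)>k)\le e^{-2a^2}$. There are three cases:
\begin{itemize}
\item If $1-2\varepsilon\ge 2a/\sqrt N$, then $1-2\varepsilon_-\le 2(1-2\varepsilon)$.
\item If $1/N\le 1-2\varepsilon<2a/\sqrt N$, then both logarithms are $\asymp\log N$.
\item If $\varepsilon_-\le 1/4$ while $\varepsilon>3/8$, then $N\le 64a^2$, so $\varepsilon$ is bounded away from $1/2$ and the fixed-gap result applies.
\end{itemize}

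Finally, for unbounded $K$, running the bounded argument on a tree rooted at a data-dependent localization point is not immediate. The per-pair test guarantees hold for fixed pairs and cannot be union-bounded over a random tree. Splitting the sample is no remedy, since near breakdown one half can exceed its own breakdown point. The paper instead builds a data-independent forest of trees on a fixed packing and selects the root closest to $S(R_\varepsilon)$.
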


\autoref{main_theorem:minimax_rate} reveals two mechanisms behind the minimax rate, together with their dependence on the corruption level $ \varepsilon  $: The first term $ \delta _\varepsilon ^{*2} $ is the information theoretic component, which is a function of the local metric entropy $ \log\mloc(r ,c) $ and the effective sample size $ N-2k=N(1-2\varepsilon ) $. The second term $ \sigma ^2\varepsilon ^2\log \frac{e}{1-2\varepsilon } $ is the contamination error component, exhibiting a logarithmic rate divergence as $ \varepsilon  $ approaches $ 1/2 $. 
This agrees with its counterpart for Huber's contamination model \cite{kotekalOptimalEstimationNull2025}[Appendix E], but differs from the sharper rate for Gaussian mean-shift model.

Our work is a direct continuation of \cite{prasadan_information_2026}. Our result recovers their Gaussian rate when $ \varepsilon \in [0,1/2-\kappa ] $ is strictly bounded away from $ 1/2 $, and further extends the minimax rate to the whole range of $ \varepsilon \in [0,1/2) $. We formalize the recovery of their fixed-gap result in the following proposition. Proof given in \autoref{appendix:subsection:useful_properties}.

\begin{proposition}\label{proposition:agreement_with_existing_result}
  Fix an absolute constant $ 0<\kappa <1/2 $ (for example, $ \kappa = 1/8 $) and suppose that $ 0\leq\varepsilon \leq 1/2-\kappa $. Choose the same sufficiently large constant $ c $ in both \autoref{main_theorem:minimax_rate} and \cite{prasadan_information_2026}[Theorem 3.10]. Let the critical local packing radius $ \eta ^* $ from their work be 
  \begin{align*}
     \eta^* := \sup\big\{ r\geq 0: \frac{Nr^2}{\sigma ^2} \leq \log M^\mathrm{ loc }(r ,c) \big\}.
  \end{align*}
  Then we have $ \eta^* \leq \delta _\varepsilon ^* \leq \frac{\eta^*}{\sqrt{2\kappa }} $. As a result,
  \begin{align*}
    \mathfrak{M}\asymp (\eta^{*2}\vee \sigma ^2\varepsilon ^2) \wedge d^2 \asymp (\delta _\varepsilon ^{*2}\vee \sigma ^2\varepsilon ^2 \log\frac{e}{1-2\varepsilon }) \wedge d^2,
  \end{align*}
  and similarly for the unbounded case $ d=\infty $, reducing to the existing result in \cite{prasadan_information_2026}. 
\end{proposition}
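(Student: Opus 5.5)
The argument rests on a monotonicity property of the map $r\mapsto \log\mloc(r,c)$, which comes from star-shapedness. Once that is in hand, the comparison $\eta^*\le\delta_\varepsilon^*\le\eta^*/\sqrt{2\kappa}$ is a short calculation with the two fixed-point definitions. The rate equivalence then follows by combining \autoref{main_theorem:minimax_rate} with the elementary bound $1\le \log\frac{e}{1-2\varepsilon}\le \log\frac{e}{2\kappa}$.

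\textbf{Lower inequality.} Since $1-2\varepsilon\le 1$, every $r$ with $\frac{Nr^2}{\sigma^2}\le\log\mloc(r,c)$ also satisfies $(1-2\varepsilon)\frac{Nr^2}{\sigma^2}\le\log\mloc(r,c)$. The set defining $\eta^*$ is therefore contained in the set defining $\delta^*_\varepsilon$, so $\eta^*\le\delta^*_\varepsilon$.

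\textbf{Upper inequality.} This is the main step. I will use the standard fact for star-shaped $K$ (presumably recorded among the useful properties in \autoref{appendix:tournament_on_tree_framework}) that $r\mapsto\mloc(r,c)$ is non-increasing. To see this, take a center $x^*$ and a local packing $\{\nu_i\}$ of $K\cap B(\nu,r)$ at scale $r/c$. For $r'<r$, contract it toward $x^*$ by the map $x\mapsto x^*+\frac{r'}{r}(x-x^*)$. The image lies in $K$ by star-shapedness, lies in $B(\nu',r')$ with $\nu'$ the image of $\nu$, and is $r'/c$-separated. This gives $\mloc(r',c)\ge\mloc(r,c)$.

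Now take $r>\eta^*/\sqrt{1-2\varepsilon}$ and set $s=\sqrt{1-2\varepsilon}\,r$. Then $s>\eta^*$, so $s$ lies outside the set defining $\eta^*$, and by monotonicity
\begin{align*}
  (1-2\varepsilon)\frac{Nr^2}{\sigma^2}=\frac{Ns^2}{\sigma^2}>\log\mloc(s,c)\ge\log\mloc(r,c),
\end{align*}
using $s\le r$. Here I must take care with the strict inequality: all $s>\eta^*$ violate the defining condition because $\eta^*$ is a supremum. Hence $r$ is not in the set defining $\delta^*_\varepsilon$, and so $\delta^*_\varepsilon\le\eta^*/\sqrt{1-2\varepsilon}\le\eta^*/\sqrt{2\kappa}$, since $1-2\varepsilon\ge2\kappa$. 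The edge cases $\eta^*=\infty$ and $\eta^*=0$ are handled by the same argument.

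\textbf{Rate equivalence.} With $\kappa$ an absolute constant, we have $\delta^{*2}_\varepsilon\asymp\eta^{*2}$ and $\sigma^2\varepsilon^2\log\frac{e}{1-2\varepsilon}\asymp\sigma^2\varepsilon^2$. Maxima and minima with $d^2$ preserve $\asymp$, so \autoref{main_theorem:minimax_rate} yields both displayed equivalences. The unbounded case $d=\infty$ is identical with the $\wedge d^2$ removed. This is exactly the Gaussian rate of \cite{prasadan_information_2026}[Theorem 3.10] with the same $c$.

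The only genuinely nontrivial ingredient is the monotonicity of the local packing number. If the paper's appendix does not already supply it, I would prove it by the contraction argument above.
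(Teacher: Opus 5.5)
Your proof of $\eta^*\le\delta_\varepsilon^*\le\eta^*/\sqrt{2\kappa}$ is correct and is essentially the paper's. Both rest on the monotonicity of $r\mapsto\mloc(r,c)$ (\autoref{lemma:useful_properties}(i)), which is proved by exactly your contraction toward a center. The paper fixes $r<\delta_\varepsilon^*$ and concludes $\sqrt{1-2\varepsilon}\,r\le\eta^*$. You run the contrapositive from $r>\eta^*/\sqrt{1-2\varepsilon}$, which has the small advantage of not needing the implicit fact that the set defining $\delta_\varepsilon^*$ is downward closed.

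The step that does not work as written is where the rate comes from. You obtain $\mathfrak{M}\asymp(\delta_\varepsilon^{*2}\vee\sigma^2\varepsilon^2\log\frac{e}{1-2\varepsilon})\wedge d^2$ from \autoref{main_theorem:minimax_rate} and then convert it to the $\eta^*$ form. In this paper the logical dependence runs the other way: the proposition is an \emph{input} to \autoref{main_theorem:minimax_rate} away from the breakdown point.
\begin{itemize}
\item The proof of \autoref{theorem:minimax_lower_bound} invokes it for $\varepsilon\le 3/8$.
\item The proof of \autoref{lemma:contamination_lower_bound} invokes it, with $\kappa=1/(128a^2)$, in the edge case $\varepsilon_-\le 1/4$.
\item The proofs of \autoref{theorem:minimax_upper_bound_bounded} and \autoref{theorem:minimax_upper_bound_unbounded} invoke it for $\varepsilon<1/4$.
\end{itemize}
Deriving the proposition from the main theorem is therefore circular.

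The first $\asymp$ has to be imported from the external fixed-gap result \cite[Theorem~3.10]{prasadan_information_2026} and its unbounded counterpart. That result gives $\mathfrak{M}\asymp(\eta^{*2}\vee\sigma^2\varepsilon^2)\wedge d^2$ for $\varepsilon\le 1/2-\kappa$, with constants depending only on $\kappa$. Your sandwich, together with $1\le\log\frac{e}{1-2\varepsilon}\le 1+\log\frac{1}{2\kappa}$, then yields the second $\asymp$. With the direction reversed in this way, your argument coincides with the paper's proof.
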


\subsection{Examples}\label{subsection:examples}

We illustrate the minimax rate in \autoref{main_theorem:minimax_rate} with several examples of constraint sets $ K $, including the unconstrained case, the $ \ell_2 $ and $ \ell_1 $ balls, and the sparse vector set. In each bounded example, we use the radius $ R:= d/2 $ to quantify the size of the constraint set. 

\paragraph{Unconstrained Case} For $ K = \mathbbm{R}^n $, the volume argument of \autoref{lemma:useful_properties}[(ii)] gives $ \log\mloc_K(r ,c) \asymp n $ for any $ r >0 $. Hence
\begin{align*}
  \mathfrak{M}(\mathbbm{R}^n) \asymp \frac{\sigma ^2n}{N(1-2\varepsilon )} \vee \sigma ^2\varepsilon ^2\log\frac{e}{1-2\varepsilon }.
\end{align*}

\paragraph{Euclidean Ball} For $ K = B_2(R) := \{x\in \mathbbm{R}^n: \left\Vert x \right\Vert \leq R\} $, the same local metric entropy holds for $ 0<r \lesssim R $ and the only difference is the radius cap.
\begin{align*}
   \mathfrak{M}(B_2(R)) \asymp\big( \frac{\sigma ^2n}{N(1-2\varepsilon )} \vee \sigma ^2\varepsilon ^2\log\frac{e}{1-2\varepsilon } \big) \wedge R^2.
\end{align*}

\paragraph{$ \ell_1 $ Ball} Consider $ K = B_1(R) := \{x\in \mathbbm{R}^n: \left\Vert x \right\Vert _1 \leq R\} $, where we assume $ n\geq 2 $. The local metric entropy scales as follows, as in \cite{prasadan_facts_2025}[Lemma 3.9]: when $ 0<r\leq R $,
\begin{align*}
  \log\mloc_{B_1(R)}(r ,c)\asymp \begin{cases}
    n,&r \leq R/\sqrt{n}\\
    \frac{R^2}{r ^2}\log\frac{enr ^2}{R^2} ,& R/\sqrt{n}< r \leq R.
  \end{cases} 
\end{align*}
The resulting minimax rate is
\begin{align*}
  \mathfrak{M}(B_1(R)) \asymp \begin{cases}
    \big( R^2\sqrt{\frac{\log (en/\sqrt{\lambda })}{\lambda }} \vee \sigma ^2\varepsilon ^2\log\frac{e}{1-2\varepsilon } \big) \wedge R^2,& \lambda \leq n^2\\
    \big( R^2\frac{n}{\lambda } \vee \sigma ^2\varepsilon ^2\log\frac{e}{1-2\varepsilon } \big) \wedge R^2,& \lambda > n^2,
  \end{cases}
\end{align*}
where we denote $ \lambda := \frac{N(1-2\varepsilon )R^2}{\sigma ^2} $. Details of the derivation are given in \autoref{appendix:proof_of_examples}.

\paragraph{Sparse Vector Set} Let $ 1\leq s \leq n/8 $ and let $ K $ be the unbounded sparse vector set $ K := \{x\in \mathbbm{R}^n: \left\Vert x \right\Vert _0 \leq s\} $. The local metric entropy scales as $ \log\mloc_{K}(r ,c)\asymp s\log\frac{en}{s} $ for any $ r >0 $. It follows that
\begin{align*}
  \mathfrak{M}(\{x\in \mathbbm{R}^n: \left\Vert x \right\Vert _0 \leq s\}) \asymp \frac{\sigma ^2s\log\frac{en}{s}}{N(1-2\varepsilon )} \vee \sigma ^2\varepsilon ^2\log\frac{e}{1-2\varepsilon }.
\end{align*}

In the case of an extra radius constraint $ R>0 $, i.e. $ K := \{x\in \mathbbm{R}^n: \left\Vert x \right\Vert _0 \leq s, \left\Vert x \right\Vert _2 \leq R\} $, the same packing number holds with an additional radius cap, and the minimax rate is
\begin{align*}
  \mathfrak{M}(\{x\in \mathbbm{R}^n: \left\Vert x \right\Vert _0 \leq s, \left\Vert x \right\Vert _2 \leq R\}) \asymp \big( \frac{\sigma ^2s\log\frac{en}{s}}{N(1-2\varepsilon )} \vee \sigma ^2\varepsilon ^2\log\frac{e}{1-2\varepsilon } \big) \wedge R^2.
\end{align*}

\subsection{Lower Bound}

We split the lower bound into the regimes of $ \varepsilon \leq 3/8 $ and $ \varepsilon > 3/8 $. The former is covered by \cite{prasadan_information_2026}; the latter is the main contribution of this work. The lower bound consists of two components: the information theoretic component and the contamination component. We present the key steps to establish each of them when $ \varepsilon  $ is close to $ 1/2 $, details of the proofs are deferred to \autoref{appendix:lower_bound_proof}. 

\paragraph{Information Theoretic Component} We first look at the information-theoretic component of the minimax rate. The key observation is that, with a proper adversary, an adversarial corruption problem with $ k $ in $ N $ corrupted samples is at least as hard as a clean data problem with $ N-2k $ samples. This induces a reduction of the effective sample size, resulting in the information-theoretic component of the minimax rate. The following lemma formalizes this argument.
  
\begin{lemma}[Effective Sample Size Reduction]\label{lemma:effective_sample_size_reduction}
  Let $ \Theta := \{\mu _1,\ldots,\mu _M\} \subset K $ be finite, and let $ V\sim \pi$ be a prior over $ [M] $. Write $ \mu _V $ for the corresponding mean. The clean data is generated as
  \begin{align*}
    \tilde{X}_1,\ldots, \tilde{X}_N \mathop{ \sim }\limits^{i.i.d.} \mathcal{N}(\mu _V,\sigma ^2I_n),\quad \mu _V\in \Theta .  
  \end{align*}
  Given an adversarial corruption algorithm $ \mathcal{C} $ that corrupts the clean data $ \tilde{X}_{1:N} $ into $ X_{1:N} $ with at most $ k=\varepsilon N < N/2 $ corrupted samples, let
  \begin{align*}
    R_\mathrm{ corrupt }(\hat{\mu }, \mathcal{C}, \pi ):= \mathbbm{E}_{\mu_V \sim \pi}\left[ \mathbbm{E}_{\mu_V, \mathcal{C} }\left[ \left\Vert \hat{\mu}( X_{1:N})- \mu_V  \right\Vert ^2 \right] \right].
  \end{align*}
  Let the Bayesian risk for clean data of size $ m $ be
  \begin{align*}
    R_\mathrm{ clean }(m,\pi) := \inf_{\hat{\mu}}\mathbbm{E}_{\mu_V \sim \pi}\left[ \mathbbm{E}_{ \mu_V }\left[ \left\Vert \hat{\mu}( \tilde{X}_{1:m})- \mu_V  \right\Vert ^2 \right] \right].
  \end{align*}

  Then for any prior $ \pi $, there exists an adversarial corruption algorithm $ \mathcal{C} _\pi $ such that for every estimator $ \hat{\mu} $ it holds that
  \begin{align*}
     R_\mathrm{ corrupt }(\hat{\mu }, \mathcal{C}_\pi , \pi ) \geq \frac{1}{2} R_\mathrm{ clean }(N-2k,\pi).
  \end{align*}

\end{lemma}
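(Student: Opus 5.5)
The plan is a \emph{posterior symmetrization} argument. The adversary uses its corruption budget to manufacture a second, equally plausible parameter $V'$. It does this in such a way that, from the estimator's point of view, the true parameter $V$ and the decoy $V'$ are exchangeable. The risk is then lower bounded by the spread of the posterior after only $m:=N-2k$ clean samples.

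\emph{Construction of $\mathcal{C}_\pi$.} Set $m := N-2k \geq 1$ and split the clean sample into three blocks: $\tilde X_{1:m}$, $\tilde X_{m+1:m+k}$ and $\tilde X_{m+k+1:N}$. The adversary proceeds in three steps.
\begin{enumerate}[label=(\roman*)]
\item It computes the posterior $\pi(\cdot\mid \tilde X_{1:m})$ of $V$ given the first block only, and draws $V'\sim \pi(\cdot\mid \tilde X_{1:m})$ using its auxiliary randomness.
\item It replaces the last block $\tilde X_{m+k+1:N}$ by $k$ fresh i.i.d.\ draws from $\mathcal{N}(\mu_{V'},\sigma^2 I_n)$.
\item It outputs the resulting $N$ points after applying an independent uniformly random permutation of $[N]$.
\end{enumerate}
Only $k$ samples are altered, so this is a valid adversary in the sense of \autoref{def:adversarial_corruption_model}.

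\emph{Exchangeability.} The joint density of $(V,V',\tilde X_{1:m})$ factors as $p(x)\,\pi(v\mid x)\,\pi(v'\mid x)$, which is symmetric in $(v,v')$. Conditionally on these three variables, the middle block consists of $k$ i.i.d.\ draws from $\mu_V$ and the inserted block of $k$ i.i.d.\ draws from $\mu_{V'}$. Hence swapping $V\leftrightarrow V'$ together with swapping the two $k$-blocks preserves the joint law. The random permutation then erases positional information. We conclude that $(V,V',X_{1:N}) \overset{d}{=} (V',V,X_{1:N})$.

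\emph{From symmetry to the bound.} For any estimator $\hat\mu$, exchangeability gives $\mathbbm{E}\|\hat\mu(X_{1:N})-\mu_V\|^2 = \mathbbm{E}\|\hat\mu(X_{1:N})-\mu_{V'}\|^2 = R_\mathrm{corrupt}(\hat\mu,\mathcal{C}_\pi,\pi)$. The inequality $\|a-b\|^2\le 2\|a-c\|^2+2\|b-c\|^2$, applied with $c=\hat\mu(X_{1:N})$, then yields
\begin{align*}
\mathbbm{E}\|\mu_V-\mu_{V'}\|^2 \le 4R_\mathrm{corrupt}(\hat\mu,\mathcal{C}_\pi,\pi).
\end{align*}
Conditionally on $\tilde X_{1:m}$, the variables $V$ and $V'$ are i.i.d.\ from the posterior. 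Therefore
\begin{align*}
\mathbbm{E}\|\mu_V-\mu_{V'}\|^2 = 2\,\mathbbm{E}\,\mathbbm{E}\big[\|\mu_V-\mathbbm{E}[\mu_V\mid\tilde X_{1:m}]\|^2 \,\big|\, \tilde X_{1:m}\big] = 2R_\mathrm{clean}(m,\pi),
\end{align*}
since the posterior mean is the Bayes estimator under squared loss. Combining the two displays gives $R_\mathrm{corrupt}\ge \tfrac12 R_\mathrm{clean}(N-2k,\pi)$.

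\emph{Main obstacle.} The delicate step is verifying the exchangeability cleanly. One must check that the shared block $\tilde X_{1:m}$, which was generated by $V$, does not break the symmetry. This is exactly why $V'$ must be drawn from the posterior given \emph{only} that block, which makes $(V,V')$ conditionally i.i.d. One must also check that the permutation removes any dependence on positions.

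I would write out the joint density of $(V,V',X_{1:N})$ explicitly as a mixture over permutations and verify invariance under the swap directly. The remaining steps are routine.
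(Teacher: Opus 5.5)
Your proposal is correct and follows essentially the same posterior-symmetrization argument as the paper: keep $N-2k$ clean anchor samples, draw $V'$ from the posterior given that anchor, make the remaining $2k$ samples symmetric in $(V,V')$, and then combine $\|a-b\|^2\le 2\|a-c\|^2+2\|b-c\|^2$ with the identity $\mathbbm{E}\|\mu_V-\mu_{V'}\|^2=2R_\mathrm{clean}(N-2k,\pi)$. The one difference is how the corrupted positions are hidden. The paper pairs the $2k$ non-anchor samples and replaces a random member of each pair, which changes exactly $k$ coordinates of $\tilde X_{1:N}$. Your global permutation, read literally against \autoref{def:adversarial_corruption_model}, alters far more than $k$ coordinates. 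However, the clean sample is exchangeable, so your output has the same joint law with $(V,V')$ as replacing a uniformly random $k$-subset of positions in place. This is therefore only a matter of phrasing, and no step of your argument fails.
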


Then applying \autoref{lemma:effective_sample_size_reduction} to a local packing set and using Fano's method, we can establish the information theoretic component lower bound. 

\begin{remark}
  \autoref{lemma:effective_sample_size_reduction} provides a useful connection between the estimation under adversarial corruption and the clean-sample estimation, illustrating the effective sample size reduction from $ N $ to $ N-2k $. The posterior symmetrization step itself is not specific to the Gaussian location family or the squared $ \ell_2 $ loss. Gaussianity enters later, when the clean-sample Bayesian risk is quantified through the Gaussian KL calculation in the local Fano's argument. This suggests that the posterior symmetrization may provide a useful tool for establishing information-theoretic lower bounds in other adversarial corruption problems: once the loss-specific relation between the posterior pairwise loss and the clean Bayesian risk is established, the same exchangeability construction can yield an analogous reduction.
\end{remark}

\paragraph{Contamination Component} We now turn to the contamination component of the minimax rate. This can be done by carefully assessing the total variation distance between two Gaussian distributions to show that these two distributions are indistinguishable under a suitable adversary. 

\begin{lemma}\label{lemma:contamination_lower_bound_adversary_construction}
  Let $ a $ be some large enough absolute constant and assume $ N $ is large enough so that $ \varepsilon _- := \varepsilon -\frac{a}{\sqrt{N}} >1/4 $. Let $ \mu _0,\mu _1\in K $ such that
  \begin{align*}
    d_\mathrm{ TV }( \mathcal{N}(\mu _0,\sigma ^2I_n), \mathcal{N}(\mu _1,\sigma ^2I_n)) \leq 2\varepsilon _-.  
  \end{align*}
  Then there exist two adversaries $ \mathcal{C}_0, \mathcal{C}_1 $, each corresponding to $ \mu _0 $ and $ \mu _1 $, such that if letting
  \begin{align*}
    \tilde{P}^{(N)}_b := \mathcal{L}\big( \mathcal{C}_b(\tilde{X}_{1:N}) \big),\quad b=0,1, 
  \end{align*}
  be the distribution of the corrupted data, we have $ d_\mathrm{ TV }( \tilde{P}^{(N)}_0, \tilde{P}^{(N)}_1) < \frac{1}{2} $, meaning that it is strictly bounded away from $ 1 $. As a result, we have
  \begin{align*}
    \mathfrak{M}\geq \mathop{ \inf }\limits_{\hat{\mu }}\mathop{ \sup }\limits_{b\in \{0,1\}} \mathbbm{E}_{ \tilde{P}_b ^{(N)}}\left[ \left\Vert \hat{\mu} - \mu_b  \right\Vert ^2 \right] \gtrsim \left\Vert \mu _0-\mu _1 \right\Vert ^2.
  \end{align*}  
\end{lemma}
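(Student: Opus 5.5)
The plan is to build both adversaries from a single maximal coupling of the two clean laws, and then pay for the random number of disagreements with a truncation. Write $P_b := \mathcal{N}(\mu_b,\sigma^2 I_n)$ and $\tau := d_\mathrm{TV}(P_0,P_1)\le 2\varepsilon_-$. Decompose
\begin{align*}
  P_b = (1-\tau)Q + \tau R_b,\qquad Q := \frac{P_0\wedge P_1}{1-\tau}.
\end{align*}
Under $P_b$, each clean sample $\tilde X_i$ is then distributed as follows: with a latent label $Z_i\sim\mathrm{Bernoulli}(\tau)$, draw from $Q$ if $Z_i=0$ and from $R_b$ if $Z_i=1$. The labels are i.i.d. and independent of $b$.

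The idea is to aim both corrupted laws at a common target law $T^{(N)}$. The natural choice is $T=(1-\tau)Q+\tfrac{\tau}{2}(R_0+R_1)$, the midpoint mixture. Reaching it from $P_b$ requires replacing only about half of the $R_b$-samples by $R_{1-b}$-samples. Adversary $\mathcal{C}_b$ uses auxiliary randomness to recover labels $Z_i$ consistent with the observed $\tilde X_i$, which is possible by sampling from the posterior of $Z_i$ given $\tilde X_i$. Among the indices with $Z_i=1$, it independently selects each with probability $1/2$ and resamples those points from $R_{1-b}$. The number of altered points is $S\sim\mathrm{Binomial}(N,\tau/2)$, with mean $N\tau/2\le N\varepsilon_- = k-a\sqrt N$.

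To respect the budget, $\mathcal{C}_b$ carries out the modification only on the event $S\le k$; otherwise it leaves the data untouched. Without this truncation, both laws would equal exactly $T^{\otimes N}$. With it, each $\tilde P^{(N)}_b$ differs from $T^{\otimes N}$ in total variation by at most $\mathbb{P}(S>k)$. Hence
\begin{align*}
  d_\mathrm{TV}(\tilde P^{(N)}_0,\tilde P^{(N)}_1)\le 2\,\mathbb{P}(S>k).
\end{align*}

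The main obstacle is bounding $\mathbb{P}(S>k)$ uniformly in $\varepsilon$. By Chebyshev's inequality,
\begin{align*}
  \mathbb{P}(S>k)\le \mathbb{P}\big(S-\mathbb{E}S> a\sqrt N\big)\le \frac{N\tau/2}{a^2N}\le \frac{1}{2a^2}.
\end{align*}
This is at most $1/8$ for $a\ge 2$, which gives $d_\mathrm{TV}<1/2$.

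There is one subtlety: the decomposition should be done at the level of the product coupling, so that the truncation event is a function of the labels alone and is symmetric in $b$. The assumptions $\varepsilon_->1/4$ and $N$ large are then only needed to make $\varepsilon_-$ meaningful and positive; the Chebyshev bound above does not use them.

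Finally, I apply Le Cam's two-point method. For any $\hat\mu$, the triangle inequality gives $\|\hat\mu-\mu_0\|^2+\|\hat\mu-\mu_1\|^2\ge \tfrac12\|\mu_0-\mu_1\|^2$. Therefore
\begin{align*}
  \max_b \mathbb{E}_{\tilde P_b^{(N)}}\|\hat\mu-\mu_b\|^2\ge \tfrac18\|\mu_0-\mu_1\|^2\big(1-d_\mathrm{TV}(\tilde P^{(N)}_0,\tilde P^{(N)}_1)\big)\gtrsim \|\mu_0-\mu_1\|^2.
\end{align*}
The bound on $\mathfrak{M}$ follows because $\mathcal{C}_0,\mathcal{C}_1$ are admissible adversaries, since each alters at most $k$ samples.
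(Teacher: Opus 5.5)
Your proposal is correct and follows the paper's proof. Your target $T=(1-\tau)Q+\tfrac{\tau}{2}(R_0+R_1)$ is exactly the midpoint $\bar P=\tfrac12(P_0+P_1)$, your label-and-resample scheme is an explicit realization of the maximal coupling of $P_b$ with $\bar P$ (disagreement probability $\tau/2=d_\mathrm{TV}(P_b,\bar P)$), and the truncation on $\{S>k\}$, the triangle inequality and Le Cam's two-point step are the same as in the paper; the only differences (Chebyshev instead of Hoeffding for $\mathbb{P}(S>k)$, and a looser Le Cam constant $\tfrac18$ instead of $\tfrac14$) do not affect the conclusion.
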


\begin{remark}
  Here the constant $ a $ is chosen universally. As a result, the lemma holds for sufficiently large $ N $, say $ N\geq N_a $ depending only on $ a $; the remaining finite sample size case can be handled by analyzing the range of $ \varepsilon  $ and see that it reduces to the small $ \varepsilon  $ case, as shown in the proof of \autoref{lemma:contamination_lower_bound}.
\end{remark}

The characterization of the total variation distance between two Gaussian distributions was given in previous work, say \cite{kotekalOptimalEstimationNull2025}[Appendix E]. This allows us to establish the contamination component of the minimax lower bound, as formalized in the following lemma.

\begin{lemma}[Contamination Lower Bound]\label{lemma:contamination_lower_bound}
  Let $ \mathfrak{M} $ be the minimax risk defined in \autoref{main_theorem:minimax_rate} and $ \varepsilon =\frac{k}{N}\in(3/8,1/2) $. Then we have
  \begin{align*}
    \mathfrak{M}\gtrsim \sigma ^2\varepsilon ^2\log\frac{1}{1-2\varepsilon } \wedge d^2,
  \end{align*}
  and set $ d=\infty $ in the case of unbounded $ K $.

\end{lemma}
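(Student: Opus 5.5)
We prove Lemma~\ref{lemma:contamination_lower_bound} by feeding a suitable pair $\mu_0,\mu_1\in K$ into Lemma~\ref{lemma:contamination_lower_bound_adversary_construction}. We take them collinear with a center of $K$. Fix a center $x^*$ of the star-shaped set $K$ and a point $x\in K$ with $\|x-x^*\|\ge d/2$ (or arbitrarily far if $K$ is unbounded). Set $\mu_0=x^*$ and $\mu_1=x^*+t(x-x^*)/\|x-x^*\|$ with $t\le \|x-x^*\|$, so that $\mu_1\in K$ by star-shapedness. The total variation distance between the two Gaussians is then one-dimensional:
\begin{align*}
d_{\mathrm{TV}}\big(\mathcal N(\mu_0,\sigma^2 I_n),\mathcal N(\mu_1,\sigma^2 I_n)\big)=2\Phi\Big(\frac{t}{2\sigma}\Big)-1 .
\end{align*}
We want the largest $t$ for which this is at most $2\varepsilon_-$, i.e.\ $\Phi(t/2\sigma)\le \tfrac12+\varepsilon_-$, equivalently $1-\Phi(t/2\sigma)\ge \tfrac12-\varepsilon_-=\tfrac{1-2\varepsilon_-}{2}$.

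Next we turn this into an explicit $t$ via Gaussian tail bounds. Since $1-\Phi(u)\ge c\,e^{-u^2}$, say, for $u\ge 0$, the choice $t=2\sigma\sqrt{\log\frac{2c}{1-2\varepsilon_-}}$ (clipped at $0$, and reduced to a constant multiple of $\sigma$ when the log is small) satisfies the TV constraint. Because $\varepsilon>3/8$, we have $\varepsilon^2\asymp 1$, so
\begin{align*}
t^2\gtrsim \sigma^2\log\frac{1}{1-2\varepsilon_-}\gtrsim \sigma^2\varepsilon^2\log\frac{1}{1-2\varepsilon} ,
\end{align*}
provided $1-2\varepsilon_-\lesssim 1-2\varepsilon$. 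We additionally cap $t\le d/2$, giving $\|\mu_0-\mu_1\|^2=t^2\gtrsim \sigma^2\varepsilon^2\log\frac1{1-2\varepsilon}\wedge d^2$. Lemma~\ref{lemma:contamination_lower_bound_adversary_construction} then yields the claim.

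The main obstacle is the shift $\varepsilon_-=\varepsilon-a/\sqrt N$. We have $1-2\varepsilon_-=(1-2\varepsilon)+2a/\sqrt N$, so if $1-2\varepsilon\ll 1/\sqrt N$ the log degrades from $\log\frac1{1-2\varepsilon}$ to about $\log\sqrt N$. We handle this by noting that $1-2\varepsilon=(N-2k)/N\ge 1/N$, since $N-2k\ge1$. Hence $\log\frac{1}{1-2\varepsilon}\le\log N=2\log\sqrt N$, and therefore
\begin{align*}
\log\frac{1}{1-2\varepsilon_-}\gtrsim \min\Big\{\log\frac1{1-2\varepsilon},\ \log\frac{\sqrt N}{4a}\Big\}\gtrsim \log\frac{1}{1-2\varepsilon}
\end{align*}
for $N$ large, up to constants. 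The remaining technicalities are as follows. When $\log\frac1{1-2\varepsilon}$ is bounded by a constant (i.e.\ $\varepsilon$ not too close to $1/2$), the target rate is $\asymp\sigma^2\wedge d^2$ and follows from the $\varepsilon\le 3/8$ results of \cite{prasadan_information_2026}, by monotonicity of $\mathfrak M$ in $k$. When $N<N_a$ or $\varepsilon_-\le 1/4$, we have $k/N>3/8$ with $N$ bounded, so $1-2\varepsilon\gtrsim 1/N_a$. The logarithm is then $O(1)$ and the same reduction to the constant-$\varepsilon$ bound $\sigma^2\wedge d^2$ applies.
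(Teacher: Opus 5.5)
Your proposal is correct and follows the paper's proof. You place $\mu_0,\mu_1$ on a segment in $K$ at squared separation $\asymp(\sigma^2\log\frac{1}{1-2\varepsilon_-})\wedge d^2$ so the two Gaussians are within total variation $2\varepsilon_-$, invoke Lemma~\ref{lemma:contamination_lower_bound_adversary_construction}, compare $\log\frac{1}{1-2\varepsilon_-}$ with $\log\frac{1}{1-2\varepsilon}$ through $1-2\varepsilon\ge 1/N$, and send the bounded-$N$ cases to the regime bounded away from $1/2$. The differences from the paper are cosmetic: a direct Gaussian-tail computation instead of Corollary~\ref{corollary:maximum_separation_two_gaussians}, and monotonicity of $\mathfrak{M}$ in $k$ plus the fixed-gap result instead of Proposition~\ref{proposition:agreement_with_existing_result} with $\kappa=1/(128a^2)$; one small fix is to ask for $\|x-x^*\|>d/3$ rather than $\ge d/2$, since the supremum of $\|x-x^*\|$ over $K$ need not be attained.
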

  
The above two components characterize the minimax lower bound when $ \varepsilon  $ is close to $ 1/2 $. Together with \autoref{proposition:agreement_with_existing_result}, which covers $ \varepsilon \in [0,3/8] $, we can establish the minimax lower bound in the full range of $ \varepsilon \in [0,1/2) $.

\begin{theorem}[Minimax Lower Bound]\label{theorem:minimax_lower_bound}
  Let $ \varepsilon = \frac{k}{N} \in [0,1/2) $, let $ \mathfrak{M} $ be the minimax risk defined in \autoref{main_theorem:minimax_rate}, and let $ d $ be the diameter of $ K $. Then we have
  \begin{align*}
    \mathfrak{M}\gtrsim \max\{\delta _\varepsilon^{*2} , \sigma ^2\varepsilon ^2\log(e/(1-2\varepsilon )) \} \wedge d^2,
  \end{align*}
  and set $ d=\infty $ in the case of unbounded $ K $. We remind that here $ \delta _\varepsilon ^* $ is the critical radius defined as in \autoref{equation:critical_local_radius_definition}.
\end{theorem}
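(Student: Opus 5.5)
We split according to whether $\varepsilon \le 3/8$ or $\varepsilon \in (3/8,1/2)$, and within the second regime we bound the information-theoretic term and the contamination term separately. Since the maximum of two quantities is at most twice their sum, it then suffices to show $\mathfrak{M}\gtrsim \delta_\varepsilon^{*2}\wedge d^2$ and $\mathfrak{M}\gtrsim \sigma^2\varepsilon^2\log(e/(1-2\varepsilon))\wedge d^2$ separately.

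\emph{Case $\varepsilon\le 3/8$.} Apply \autoref{proposition:agreement_with_existing_result} with $\kappa=1/8$. The lower bound of \cite{prasadan_information_2026} gives $\mathfrak{M}\gtrsim(\eta^{*2}\vee\sigma^2\varepsilon^2)\wedge d^2$. The proposition gives $\delta_\varepsilon^*\le \eta^*/\sqrt{2\kappa}=2\eta^*$. Moreover, $\log(e/(1-2\varepsilon))\le \log(4e)$ is bounded. Together these yield the claim with absolute constants.

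\emph{Case $\varepsilon>3/8$, contamination term.} Note that $\log(e/(1-2\varepsilon))=1+\log(1/(1-2\varepsilon))$. Since $\varepsilon^2\ge 9/64$, the summand $\sigma^2\varepsilon^2\cdot 1$ is dominated by $\sigma^2\wedge d^2$ up to constants. This quantity is itself already a lower bound: a two-point Le Cam argument with $\|\mu_0-\mu_1\|\asymp\sigma\wedge d$ (chosen on a segment through the star center, so that both points lie in $K$) gives TV distance at most $2\varepsilon_-$, and \autoref{lemma:contamination_lower_bound_adversary_construction} then applies. Alternatively, one can invoke monotonicity of $\mathfrak{M}$ in $k$ and the $\varepsilon=3/8$ bound. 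The remaining summand $\sigma^2\varepsilon^2\log\frac{1}{1-2\varepsilon}\wedge d^2$ is exactly \autoref{lemma:contamination_lower_bound}.

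\emph{Case $\varepsilon>3/8$, information-theoretic term.} This is the main step. Take $r$ slightly below $\delta_\varepsilon^*$ (and at most of order $d$), so that $\log\mloc(r,c)\ge (1-2\varepsilon)Nr^2/\sigma^2$. Let $\nu$ be the center of a maximal local packing $\{\mu_1,\dots,\mu_M\}\subset K\cap B(\nu,r)$ with pairwise separation $r/c$, and let $\pi$ be uniform on $[M]$. \autoref{lemma:effective_sample_size_reduction} provides an adversary with
\begin{align*}
\sup_{\mu}\mathbbm{E}\|\hat\mu-\mu\|^2\ge R_{\mathrm{corrupt}}(\hat{\mu},\mathcal{C}_\pi,\pi)\ge \tfrac12 R_{\mathrm{clean}}(N-2k,\pi).
\end{align*}
Fano's inequality with pairwise KL bound $\frac{(N-2k)\|\mu_i-\mu_j\|^2}{2\sigma^2}\le \frac{2(N-2k)r^2}{\sigma^2}$ gives $R_{\mathrm{clean}}\gtrsim (r/c)^2\,(1-\frac{2(N-2k)r^2/\sigma^2+\log 2}{\log M})$.

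The obstacle is that the defining inequality only gives $\log M\ge (N-2k)r^2/\sigma^2$, which is not enough to make the Fano fraction bounded below $1$. We therefore apply Fano at the rescaled radius $r'=r/\sqrt{8}$ instead. Using that $r\mapsto \log\mloc(r,c)$ is nonincreasing for star-shaped $K$ (a property I would take from \autoref{lemma:useful_properties}, via contraction toward the center), we get $\log\mloc(r',c)\ge \log\mloc(r,c)\ge 8(N-2k)r'^2/\sigma^2$. This makes the fraction at most $1/4+\log 2/\log M$. The residual case where $\log M$ is small, say $M\le 16$, is handled by a two-point bound with $\|\mu_0-\mu_1\|\asymp r$ and KL of order $1$.

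When $d<\infty$ and $\delta_\varepsilon^*\gtrsim d$, we cap at $r\asymp d$, where the packing still contains at least two points separated by order $d$; this gives the $\wedge d^2$. Combining the three cases proves the theorem.
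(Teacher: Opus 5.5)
Your argument is correct. For $\varepsilon\le 3/8$ and for the contamination term it matches the paper, which simply combines \autoref{proposition:agreement_with_existing_result} with \autoref{lemma:contamination_lower_bound}. Your separate handling of the summand $\sigma^2\varepsilon^2\cdot 1$ is harmless but unnecessary, since $\log\frac{1}{1-2\varepsilon}>\log 4>1$ once $\varepsilon>3/8$.

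The genuine difference is in the information-theoretic term. The paper splits on whether $\frac{N-2k}{\sigma^2}\delta_\varepsilon^{*2}$ exceeds $8\log 2$. In the large regime it runs Fano at radius $\delta_\varepsilon^*/4$ via \autoref{lemma:local_packing_lower_bound}; this is your rescaling trick, with a sharper centroid bound on the mutual information. In the small regime it imports the geometric argument of \cite{prasadan_information_2026} that $d\le 12\delta_\varepsilon^*$, then runs Fano at radius $d/6$. That step requires $c$ so large that $\log\mloc(d/6,c)>64\log 2$, and the paper also treats the singleton case $\delta_\varepsilon^*=0$ separately.

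You split instead on the packing cardinality. You use Fano at $r/\sqrt 8$ when $M\ge 16$, and a two-point bound when $M<16$, where $(N-2k)r^2/\sigma^2\le\log 16$ keeps the KL between two packing points below $\log 2$. Your route is self-contained, needs no diameter comparison and no further enlargement of $c$, and treats bounded and unbounded $K$ uniformly. Your cap $r\asymp d$ is not even needed: $\log\mloc(r,c)=0$ for $r>cd$ already forces $\delta_\varepsilon^*\le cd$. The paper's route instead yields the trivial-scale bound $d^2$ directly in the degenerate regime, mirroring Case 1 of its upper-bound proof.

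One detail you should make explicit: the ``KL of order one'' in your residual case refers to $N-2k$ clean samples. So the two-point bound must itself go through \autoref{lemma:effective_sample_size_reduction} with $\Theta=\{\mu_0,\mu_1\}$ and the uniform prior; the lemma allows any finite $\Theta$. A plain Le Cam bound on $N$ samples would have KL of order $Nr^2/\sigma^2=\frac{(N-2k)r^2}{(1-2\varepsilon)\sigma^2}$, which need not stay bounded as $\varepsilon\uparrow 1/2$.
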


\subsection{Upper Bound}

For the upper bound, we apply the tournament-on-the-tree framework as in \cite{prasadan_information_2026}[Section 3]. The tree structure and estimator construction remain unchanged up to a rescaling, while the main difference is that we need to calibrate the constants in the tail probability bound when $ \varepsilon  $ approaches $ 1/2 $ to characterize the minimax rate near the breakdown point. Specifically, the error exponent of pairwise robust testing procedure becomes $ \varepsilon  $-dependent, together with a different stopping condition for the tournament procedure. For unbounded $ K $, the $ \varepsilon  $-dependence in the localization argument also needs to be assessed. 

In the following we present the main results for the upper bound. We refer the readers to \autoref{appendix:subsection:pruned_directed_tree} for a review of the tournament-on-the-tree framework and \autoref{appendix:constant_calibration_near_breakdown_point}-\ref{appendix:minimax_upper_bound_unbounded_case} for the technical details of the upper bound proof.

\begin{theorem}[Minimax Upper Bound for Bounded $ K $]\label{theorem:minimax_upper_bound_bounded}
  Let $ K $ be a bounded star-shaped set with diameter $ d $. Suppose the samples are drawn following \autoref{def:adversarial_corruption_model} where $ \varepsilon = \frac{k}{N} \in [0,1/2) $. Then we have the following minimax upper bound:
  \begin{align}\label{equation:minimax_upper_bound_bounded}
    \mathfrak{M}\lesssim \max \{ \delta _\varepsilon ^{*2} , \sigma ^2\varepsilon ^2\log(e/(1-2\varepsilon )) \} \wedge d^2,
  \end{align}
  where we define $ \delta _\varepsilon ^* $ as in \autoref{equation:critical_local_radius_definition}.
\end{theorem}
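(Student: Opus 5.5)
The bound is proved by running the tournament-on-the-tree estimator of \cite{prasadan_information_2026}, keeping the same tree and only recalibrating the constants so that they track $\varepsilon$. The first step disposes of the trivial cap: any point of $K$ has risk at most $d^2$, so it suffices to prove $\mathfrak{M}\lesssim \delta_\varepsilon^{*2}\vee \sigma^2\varepsilon^2\log(e/(1-2\varepsilon))$ whenever this quantity is below $d^2$. Write $\rho^2:=\sigma^2\varepsilon^2\log(e/(1-2\varepsilon))$ for the contamination scale.

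\textbf{Step 1: a robust pairwise test with an $\varepsilon$-dependent exponent.} For two candidates $\nu_1,\nu_2\in K$, I use the projection test along $u=(\nu_2-\nu_1)/\|\nu_2-\nu_1\|$. It counts how many samples have $\langle X_i-(\nu_1+\nu_2)/2,u\rangle>0$ and compares this count to $N/2$.

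Suppose the truth $\mu$ is closer to $\nu_1$ and the projected separation is $t$. Each clean sample falls on the wrong side with probability $\Phi(-t/(2\sigma))$. The adversary can flip at most $k$ signs, so a wrong decision requires
\begin{align*}
\#\{\text{clean wrong}\}\ \ge\ N/2-k\ =\ N(1-2\varepsilon)/2 .
\end{align*}
To beat this we need $\Phi(-t/2\sigma)$ to be bounded away below $(1-2\varepsilon)/2$ by a constant factor, say half of it. Since $\Phi(-x)\approx e^{-x^2/2}/x$, this forces $t\gtrsim \sigma\sqrt{\log(e/(1-2\varepsilon))}$. Multiplying through by the $\varepsilon$-factors coming from the tree gives the separation scale $\rho$.

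Under that condition, a Chernoff/Bernstein bound for the Binomial count yields a type-I/II error of order
\begin{align*}
\exp\bigl(-c'N(1-2\varepsilon)\,\min\{t^2/\sigma^2,\,1\}\bigr).
\end{align*}
The point of this step is that the exponent carries the effective sample size $N(1-2\varepsilon)$ and not $N$. I would check this with a KL-type tail bound for a Binomial whose mean is a constant fraction below its threshold $N(1-2\varepsilon)/2$: the relative deviation is order one, so the exponent is about $N(1-2\varepsilon)$ times the local Gaussian excess.

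\textbf{Step 2: tournament on the tree.} At level $j$ the radius is $d/2^{j}$, and we hold a local packing at scale $d/(2^{j}c)$ of cardinality at most $\mloc(d/2^{j},c)$. The winner is chosen by pairwise tests as in the Appendix framework. A union bound gives a per-level failure probability of
\begin{align*}
(\mloc)^2\exp\bigl(-c'N(1-2\varepsilon)r_j^2/\sigma^2\bigr),
\end{align*}
as long as $r_j\gtrsim \rho$. By the definition of $\delta^*_\varepsilon$, for $r_j>\delta_\varepsilon^*$ we have $\log\mloc(r_j,c)<(1-2\varepsilon)Nr_j^2/\sigma^2$, so after choosing $c$ large this probability is at most $\exp(-c''N(1-2\varepsilon)r_j^2/\sigma^2)$. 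The stopping level is the first $J$ with $r_J\lesssim \max\{\delta^*_\varepsilon,\rho\}$. Before stopping, star-shapedness ensures that the selected point stays within $O(r_j)$ of $\mu$, and the tail failures are summed geometrically.

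\textbf{Step 3: integrate the tail.} This bounds $\mathbbm{E}\|\hat\mu-\mu\|^2$ by
\begin{align*}
r_J^2+\sum_{j<J} r_{j-1}^2\exp\bigl(-c''N(1-2\varepsilon)r_j^2/\sigma^2\bigr)\ \lesssim\ \max\{\delta_\varepsilon^{*2},\rho^2\},
\end{align*}
capped by $d^2$. The sum is a geometric series dominated by its last term. When the exponent is of order $1$ it is absorbed into $\delta_\varepsilon^{*2}$, which is always at least order $\sigma^2/(N(1-2\varepsilon))$ because $\log\mloc\gtrsim 1$ at small radii. Otherwise it is absorbed into $\rho^2$.

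\textbf{Main obstacle.} The hard part is Step 1's calibration in the regime $1-2\varepsilon\to0$. There the Binomial threshold margin $N(1-2\varepsilon)/2$ may be comparable to the fluctuation $\sqrt N$, so concentration fails unless $N(1-2\varepsilon)\gtrsim 1$ times a constant. I would handle this by splitting into two cases. If $N(1-2\varepsilon)r^2/\sigma^2$ is bounded, the bound is trivial because $\delta^*_\varepsilon$ is already at least that scale. Otherwise I use the multiplicative Chernoff bound for $\mathrm{Binomial}(N,p)$ with $p\le(1-2\varepsilon)/4$, which needs only $Np\cdot$(deviation) large and not $\sqrt N$-scale control. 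The remaining care is in tracking the constants so that none of them blow up as $\varepsilon\uparrow1/2$.
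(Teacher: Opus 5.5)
There is a genuine gap: your Step 1 tail bound is too weak to support the union bound in Step 2, and the argument does not produce the factor $\varepsilon$ in $\rho$ for small $\varepsilon$. The overall architecture does match the paper's. Your projection sign test is exactly the paper's $\psi$, since $\|X_i-\nu_1\|\ge\|X_i-\nu_2\|$ iff $X_i$ lies on $\nu_2$'s side of the bisecting hyperplane.

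\textbf{The capped exponent.} Your displayed error $\exp(-c'N(1-2\varepsilon)\min\{t^2/\sigma^2,1\})$ saturates at $\exp(-c'N(1-2\varepsilon))$ once $t\gtrsim\sigma$, which is always the case near breakdown. So does the multiplicative Chernoff bound with only $p\le(1-2\varepsilon)/4$ that you plan to use. Such a bound cannot pay for the factor $(\mloc)^2$ when $\log\mloc\gg N(1-2\varepsilon)$. For example, take a large Euclidean ball with $n\gg N(1-2\varepsilon)\log\frac{e}{1-2\varepsilon}$. There the target is $\delta_\varepsilon^{*2}\asymp\sigma^2 n/(N(1-2\varepsilon))$, and each pairwise test must have error $e^{-\Omega(n)}$. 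In Step~2 you silently switch to the uncapped exponent $c'N(1-2\varepsilon)r_j^2/\sigma^2$. That is precisely the nontrivial content of \autoref{lemma:robust_pairwise_testing_near_breakdown_point}. It requires $p$ to be exponentially smaller than $a:=1-2\varepsilon$, not merely a constant factor smaller. Concretely, use $p=\Phi(-t/2\sigma)\le\frac12 e^{-t^2/(8\sigma^2)}$ and a union bound over $\lceil Na/2\rceil$-subsets. For $2ep\le a$ this gives
\[
\mathbbm{P}\bigl(\mathrm{Binomial}(N,p)\ge Na/2\bigr)\le\Bigl(\frac{2ep}{a}\Bigr)^{Na/2}\le\exp\Bigl(-\frac{Na}{2}\Bigl[\frac{t^2}{8\sigma^2}-\log\frac{e}{a}\Bigr]\Bigr),
\]
which is at most $\exp(-Nat^2/(32\sigma^2))$ once $t^2\ge 16\sigma^2\log(e/a)$. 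This one computation yields both the threshold $\sigma\sqrt{\log(e/(1-2\varepsilon))}$ and the uncapped exponent. The paper gets the same result by recalibrating $\alpha_\varepsilon$ and $L_\varepsilon$ inside the argument of \cite{prasadan_information_2026}. It also makes your case split on $N(1-2\varepsilon)$ unnecessary, since $N(1-2\varepsilon)=N-2k\ge 1$ and no $\sqrt N$-scale control is used.

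\textbf{The small-$\varepsilon$ regime.} The tree contributes no $\varepsilon$-factors, so ``multiplying through by the $\varepsilon$-factors coming from the tree'' has no content. Your margin requirement $\Phi(-t/2\sigma)\le(1-2\varepsilon)/4$ forces $t\gtrsim\sigma$ for every $\varepsilon$, including $\varepsilon=0$. As written, you would therefore get $\sigma^2$ instead of $\sigma^2\varepsilon^2$ for small $\varepsilon$. For instance, $K=[0,R]\subset\mathbbm{R}$ with $R\gg\sigma$ and $\varepsilon=0$ would give $\sigma^2$ rather than the true $\sigma^2/N$. For $\varepsilon$ bounded away from $1/2$, the correct threshold $t\gtrsim\varepsilon\sigma$ comes from an additive margin: $\Phi(t/2\sigma)-\frac12\asymp t/\sigma\ge 2\varepsilon$, followed by Hoeffding. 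This is the small-$r/\sigma$ branch of \cite{prasadan_information_2026}. The paper runs the new calibration only for $\varepsilon\in[1/4,1/2)$, where $\rho\asymp\sigma\sqrt{\log(e/(1-2\varepsilon))}$. It imports $\varepsilon<1/4$ from \cite{prasadan_information_2026} via \autoref{proposition:agreement_with_existing_result}, and you should do the same.

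\textbf{A smaller point in Step 2.} ``Choosing $c$ large'' is not what absorbs $(\mloc)^2$: the test constant $c'$ is small, while the definition of $\delta^*_\varepsilon$ carries constant $1$. Instead, stop at $r_J\asymp D\max\{\delta^*_\varepsilon,\rho\}$ with $D$ a large constant and use monotonicity. Every scale $s\ge 2\delta^*_\varepsilon$ satisfies $\log\mloc(s,c)\le\log\mloc(2\delta^*_\varepsilon,c)<4N(1-2\varepsilon)\delta^{*2}_\varepsilon/\sigma^2$. This is a small multiple of $N(1-2\varepsilon)r_j^2/\sigma^2$ for all $j\le J$, as in Case~2 of the paper's proof.
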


For unbounded $ K $, there is an extra step to localize the problem to a bounded region. We argue that the localization probability and the empty localization set probability have the same contribution and do not affect the minimax rate. As a result, we have the following similar argument as in \cite{prasadan_information_2026}[Section 5]. 

\begin{theorem}[Minimax Upper Bound for Unbounded $ K $]\label{theorem:minimax_upper_bound_unbounded}
  Let $ K $ be an unbounded star-shaped set. Suppose the samples are drawn following \autoref{def:adversarial_corruption_model} where $ \varepsilon = \frac{k}{N} \in [0,1/2) $. Then we have the following minimax upper bound:
  \begin{align*}
    \mathfrak{M}\lesssim \max\{\delta _\varepsilon ^{*2} , \sigma ^2\varepsilon ^2\log(e/(1-2\varepsilon )) \},
  \end{align*}
  where $ \delta _\varepsilon ^* $ is defined as in \autoref{theorem:minimax_upper_bound_bounded}.
\end{theorem}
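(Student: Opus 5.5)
The bound follows by localizing with a robust estimator and then applying \autoref{theorem:minimax_upper_bound_bounded} on a bounded star-shaped piece of $K$. The errors of the two stages are combined through a tail-integration argument. Write $\rho^2 := \max\{\delta_\varepsilon^{*2}, \sigma^2\varepsilon^2\log(e/(1-2\varepsilon))\}$ for the target rate. The estimator is built in two stages, following \cite{prasadan_information_2026}[Section 5].

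\textbf{Stage 1 (localization).} Compute a coarse robust location estimate $\hat\mu_0$ from the corrupted sample, for instance the coordinatewise-free tournament/median-of-balls estimator. By a Hoeffding bound for the number of clean points inside $B(\mu, r)$, we want
\begin{align*}
\mathbb{P}\big(\|\hat\mu_0-\mu\| > t\big) \le \exp\big(-c\,N(1-2\varepsilon)^2\big) + \text{(Gaussian tail in } t),
\end{align*}
for $t$ above a radius $R_0$ of order $\sigma\sqrt{n}$. The key point is that the majority margin $N-2k = N(1-2\varepsilon)$ must still produce exponentially small failure probability near breakdown. We then restrict to $K_0 := K\cap B(\hat\mu_0, R)$ with $R$ a large multiple of $\rho + R_0$. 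Rather than using $\hat\mu_0$ itself, it is better to localize in stages: run a robust pairwise-test tournament over a coarse net of $K$, shrinking radii geometrically from large values down to $C\rho$. This keeps the localization radius at order $\rho$ and avoids $\sigma\sqrt n$.

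\textbf{Stage 2 (estimation on the localized set).} The set $K_0$ is not star-shaped in general. We therefore take a center $x^*$ of $K$, intersect $K$ with a ball around the projection point, and use the fact that the local entropy of a subset is dominated by that of $K$. This gives $\delta^*_\varepsilon(K_0) \le \delta^*_\varepsilon(K)$; equivalently, we run the tree tournament of \autoref{theorem:minimax_upper_bound_bounded} directly on $K\cap B(\hat\mu_0,R)$, since that proof only uses local packing numbers. On the good event the conditional risk is $\lesssim \rho^2$.

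\textbf{Stage 3 (bad event).} On the complement we clip the estimate, for instance by returning $\hat\mu_0$ itself. The bad-event contribution is then $\mathbb{E}[\|\hat\mu_0-\mu\|^2\mathbbm{1}_{\mathrm{bad}}]$, which we bound by integrating the tails $\int_{R}^\infty 2t\,\mathbb{P}(\|\hat\mu_0-\mu\|>t)\,dt$. This shows it is $\lesssim \rho^2$, using the fact that the per-level test error exponents are of order $(1-2\varepsilon)N r^2/\sigma^2$ minus entropy. These exponents are summed over levels exactly as in the bounded case.

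\textbf{Main obstacle.} The main obstacle is making the localization failure probability small uniformly as $\varepsilon\uparrow 1/2$. The pairwise robust tests at scale $r$ have error like $\exp(-c N(1-2\varepsilon) r^2/\sigma^2)$ only once $r \gtrsim \sigma\varepsilon\sqrt{\log(e/(1-2\varepsilon))}$. Below that scale the contamination term dominates, which is why the stopping radius must include the $\log$ term. I would reuse the $\varepsilon$-calibrated test exponents from the bounded-case analysis at every localization level and check that the union bound over an infinite sequence of dyadic scales is summable. For $r$ large, the entropy of $K\cap B(\nu,r)$ at scale $r/c$ is bounded only by $\log M^{\mathrm{loc}}(r,c)$, which is at most $(1-2\varepsilon)Nr^2/\sigma^2$ for $r>\delta^*_\varepsilon$; this needs the monotonicity-type properties in \autoref{lemma:useful_properties}.
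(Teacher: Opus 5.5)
Your architecture matches the paper's: localize robustly, run the bounded-case tree tournament on the localized piece using only the local entropy of $K$, and integrate the tail of the localization error. But the step that carries the near-breakdown content is stated in a form that fails.

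\textbf{Localization tail.} A Hoeffding bound on the number of clean points escaping $B(\mu,t)$ gives failure probability $\exp(-cN(1-2\varepsilon)^2)=\exp(-c(N-2k)^2/N)$. This is of constant order once $N-2k\lesssim\sqrt N$. More importantly, it does not decay in $t$, so in Stage 3 the integral $\int_R^\infty 2t\,\mathbb{P}(\|\hat\mu_0-\mu\|>t)\,dt$ diverges and the bad event is not controlled at all.

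The threshold is also mis-calibrated. A majority survives in $B(\mu,t)$ only if fewer than $(N-2k)/2$ clean points escape it. So the escape probability $\mathbb{P}(\|\tilde X_1-\mu\|>t)$ must be small compared with $1-2\varepsilon$. At $t\asymp\sigma\sqrt n$ this probability is a constant, and once $1-2\varepsilon$ drops below twice that constant the adversary destroys the majority with high probability. You need $t^2\gtrsim\sigma^2(n+\log\frac{1}{1-2\varepsilon})$; the paper takes the larger $R_\varepsilon^2\asymp n\sigma^2/(1-2\varepsilon)$. You also need a Chernoff (relative-entropy) bound that exploits how small the escape probability is. This gives $\mathbb{P}(\mu\notin S(t))\le\exp(-N(1-2\varepsilon)\gamma_\varepsilon t^2/(64\sigma^2))$ for \emph{every} $t\ge R_\varepsilon$, with $\gamma_\varepsilon=1-Q_4(1-2\varepsilon)$.

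Nestedness of $S(t)$ and a peeling over $\hat R=\inf\{R\ge 0:S(R)\neq\varnothing\}$ then bound the bad-event contribution by $\sigma^2/(N(1-2\varepsilon))$. This is $\lesssim\delta_\varepsilon^{*2}$ because an unbounded star-shaped $K$ contains arbitrarily long segments, so $\log M^{\mathrm{loc}}(r,c)\ge\log 2$ for all $r>0$. Your Stage 3 appeal to summing per-level test exponents conflates this localization failure with tree failure; the two are controlled by different bounds.

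\textbf{Data-dependent tree.} Building the tree inside the data-dependent set $K\cap B(\hat\mu_0,R)$ breaks the tournament analysis. That analysis union-bounds test errors over a fixed finite family of node pairs near $\mu$ (property (iii) of the pruned-tree lemma) while reusing the same sample. Sample splitting is no remedy near breakdown, since the adversary can concentrate its corruptions in one half and push that half past fraction $1/2$.

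The paper instead uses a data-independent forest:
\begin{itemize}
\item an $m$-packing and $2m$-covering $S_m$ of (the closure of) $K$, with $m=R_\varepsilon/(c/2-1)$, so that $d_m=2m+2R_\varepsilon=cm$;
\item a pruned tree of diameter $d_m$ rooted at each $s\in S_m$;
\item the root selected as the net point closest to $S(R_\varepsilon)$.
\end{itemize}
On $\{\mu\in S(R_\varepsilon)\}$ the selected tree covers $\mu$, because $\mathrm{diam}(S(R_\varepsilon))\le 2R_\varepsilon$. At most $M^{\mathrm{loc}}(d_m,c)$ roots lie within $d_m$ of $\mu$, and that extra union bound is absorbed into stopping condition A.

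The calibrated radius is used once more: it makes both stopping conditions hold at the root, via $\log M^{\mathrm{loc}}\lesssim n$ and $(1-2\varepsilon)\log\frac{e}{1-2\varepsilon}\lesssim 1$. Your alternative of localizing directly to radius of order $\rho$ by a tournament over a coarse net of all of $K$ cannot get started without such a bounded region. The tournament bound $|S|\,p(r)$ is infinite for an infinite net.
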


\section{Discussion}\label{section:discussion}

We established the minimax rate for constraint Gaussian mean estimation under adversarial corruption, with a focus on the regime when the corruption level $ \varepsilon  $ approaches the breakdown point of $ 1/2 $. There are two distinct effects of proximity to the breakdown point on the minimax rate: A decrease in the effective sample size to $ N(1-2\varepsilon ) $ changes the critical local metric entropy radius $ \delta _\varepsilon ^* $; and a logarithmic blow-up $ \varepsilon ^2\log\frac{e}{1-2\varepsilon } $ in the contamination error component. When $ \varepsilon  $ is strictly bounded away from $ 1/2 $, our result recovers the existing minimax rate in \cite{prasadan_information_2026}.

Our result utilizes the tournament-on-the-tree framework in \cite{prasadan_information_2026} to establish the minimax upper bound. They also presented the application of the framework to sub-Gaussian model, and the subsequent work \cite{peng_robust_2026} extended to even heavier tailed distributions of finite moments. It is of interest to characterize the full near-breakdown behavior under these broader noise settings. Another interesting direction is Gaussian noise with known covariance $ \Sigma  $. The proof of our work does not translate directly to this setting: though the data can be directly whitened, the loss and the information-theoretic component induced by the constraint set are no longer invariant under the whitening transformation. As a result, the current formulation does not extend automatically to the known covariance case, and a more careful analysis is needed. Also, our estimation allows dependence on known $ \sigma ^2 $. Establishing the minimax rate with unknown $ \sigma ^2 $ information is another possible future direction. We leave these extensions to future work.

\section*{Declaration of AI Use}

During the preparation of this manuscript, the authors used the generative AI tools from OpenAI to assist with literature review, grammar and language polishing, and refinement of mathematical arguments. Generative AI was also used in the verification of the constants in certain proofs. All AI-assisted suggestions were evaluated by the authors, who take full responsibility for the mathematical results, proofs, and other content of this paper.

\bibliography{NearBreakdown}
\bibliographystyle{apalike}

\appendix

\section{Tournament-on-the-Tree Framework}\label{appendix:tournament_on_tree_framework}

The tournament-on-the-tree framework provides a systematic approach to establish the upper bounds of robust estimation problems. The core idea was first introduced in \cite{neykovMinimaxRateGaussian2023} and later extended in \cite{prasadan_information_2026} to handle more general settings. We briefly summarize the key steps of this framework below and refer the readers to \cite{peng_robust_2026}[Section 3] for a more systematic introduction. Throughout the upper bound arguments, we use an $ \varepsilon  $ subscript in a parameter to indicate its dependency on the corruption level. 

We retain the large enough constant $ C>2 $ as the separation constant in the pairwise robust testing, and set $ c:= 4(C+1) $ as the constant in the local metric entropy definition and the tree construction, which follows the re-notation from \cite{prasadan_information_2026}[Remark 3.9]. 
For bounded $ K $ with diameter $ d $, let the geometric scale be
\begin{align}\label{equation:geometric_scale_r_J}
  r_J:= \frac{d}{2^{J-1}(C+1)},\quad J\geq 1. 
\end{align}

\subsection{Useful Properties}\label{appendix:subsection:useful_properties}

We first collect several useful properties of the local packing number and star-shaped sets. 

\begin{lemma}[Useful Properties]\label{lemma:useful_properties}
  Let $ K\subseteq \mathbbm{R}^n $ be a star-shaped set and let $ c>1 $ be some constant. Then the following properties hold:
  \begin{itemize}[topsep=2pt,itemsep=0pt]
    \item[(i)] The mapping $ r\mapsto \mloc_K(r,c) $ is non-increasing in $ r>0 $;
    \item[(ii)] For any $ r>0 $, we have $ \mloc_K(r,c) \leq (1+2c)^n $;
    \item[(iii)] If $ K $ is bounded with diameter $ d $, then $ K $ must contain a line segment of length $ \geq d/3 $. If $ K $ is unbounded, then it must contain line segments of any length.
  \end{itemize}
\end{lemma}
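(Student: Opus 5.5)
We prove the three items separately. Each follows from elementary packing and volume arguments together with the star-shaped structure.

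For (i), we compare $\mloc_K(r,c)$ and $\mloc_K(r',c)$ for $0<r<r'$. Fix $\nu\in K$ and an $r'/c$-packing $\{\nu_1,\dots,\nu_M\}$ of $K\cap B(\nu,r')$. We want to produce an $r/c$-packing of the same cardinality inside some ball $K\cap B(\nu',r)$. The natural tool is a contraction toward a center $x^*$ of $K$. Set $t:=r/r'\in(0,1)$ and consider the homothety $x\mapsto x^*+t(x-x^*)$. Star-shapedness sends each $\nu_i$ into $K$, and the homothety maps $B(\nu,r')$ onto $B(\nu',r)$ with $\nu':=x^*+t(\nu-x^*)\in K$. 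It also scales pairwise distances exactly by $t$, so the images are $tr'/c=r/c$-separated. This gives $M(r/c,K\cap B(\nu',r))\ge M$. Taking suprema over $\nu$ and over packings yields $\mloc_K(r,c)\ge\mloc_K(r',c)$, i.e.\ monotonicity. The only subtle point is that we must use the contraction rather than the trivial inclusion $B(\nu,r)\subseteq B(\nu,r')$: the inclusion goes the wrong way because the separation scale $r/c$ also changes with $r$. The homothety handles both effects at once, and this is where star-shapedness is genuinely needed.

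For (ii), we use the standard volume bound. Take any $r/c$-packing in $K\cap B(\nu,r)$. The balls of radius $r/(2c)$ around the packing points are pairwise disjoint. They are all contained in $B(\nu,r+r/(2c))$. Comparing volumes gives
\[
M\le \Big(\frac{r+r/(2c)}{r/(2c)}\Big)^n=(1+2c)^n .
\]
The bound is uniform in $\nu$, so it holds for the supremum.

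For (iii), first suppose $K$ is bounded with diameter $d$. Pick $x,y\in K$ with $\|x-y\|\ge d(1-\eta)$ for small $\eta$ (the supremum need not be attained). By the triangle inequality, $\|x-x^*\|+\|y-x^*\|\ge d(1-\eta)$, so one of the two segments $[x^*,x]$ or $[x^*,y]$, both contained in $K$, has length at least $d(1-\eta)/2\ge d/3$. Now suppose $K$ is unbounded. For any $L$ there is $x\in K$ with $\|x-x^*\|\ge L$, and the segment $[x^*,x]\subseteq K$ then has length at least $L$. Among the three items, (i) is the one requiring the only real idea, namely the homothety toward the center; (ii) and (iii) are routine.
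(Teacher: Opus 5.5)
Your proof is correct and follows essentially the same route as the paper, which cites Prasadan and Neykov for (i) and (iii) and the standard volume argument for (ii). The arguments behind those citations are the ones you give: contracting by the homothety $x\mapsto x^*+t(x-x^*)$ toward the star center for monotonicity, comparing volumes of the disjoint $r/(2c)$-balls for the $(1+2c)^n$ bound, and applying the triangle inequality to the segments $[x^*,x]$, $[x^*,y]\subseteq K$ for (iii), where $d/3$ is a convenient constant that avoids assuming the diameter is attained.
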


\begin{proof}
  The proofs of (i) and (iii) were given in \cite{prasadan_information_2026}[Lemma 1.4, Lemma 1.3]. (ii) is a direct consequence of the volume argument, see \cite{peng_robust_2026}[Remark 4.2] for details. 
\end{proof}

We also give the following proof of \autoref{proposition:agreement_with_existing_result} to show that our result is consistent with \cite{prasadan_information_2026} when $ \varepsilon  $ is strictly bounded away from $ 1/2 $.

\begin{proof}[Proof of \autoref{proposition:agreement_with_existing_result}]
  We prove for the bounded $ K $ case, and the unbounded $ K $ case follows similarly. 
    
  By \autoref{lemma:useful_properties}[(i)], for any $ r< \delta _\varepsilon ^* $ we have
  \begin{align*}
    \log\mloc(\sqrt{1-2\varepsilon }r,c) \geq \log\mloc(r,c) \geq (1-2\varepsilon )Nr^2/\sigma ^2 = N(\sqrt{1-2\varepsilon }r)^2/\sigma ^2,
  \end{align*}
  meaning that $ \sqrt{1-2\varepsilon }r \leq \eta ^* $. Taking supremum over $ r<\delta _\varepsilon ^* $ gives the desired sandwich inequality. 
  Finally, noticing that
  \begin{align*}
    1\leq \log\frac{e}{1-2\varepsilon } \leq 1+\log\frac{1}{2\kappa } ,
  \end{align*}
  so the two minimax rates are equivalent up to a constant factor depending on $ \kappa  $.
\end{proof}

\subsection{Pruned Directed Tree Construction and the Estimator}\label{appendix:subsection:pruned_directed_tree}

Fix a sufficiently large universal separation constant $ C>2 $ and set $ c:= 4(C+1) $. Here $ c $ is the one to be used in local metric entropy through the proof. We remind that we adapt their \cite{prasadan_information_2026}[Remark 3.9] to rescale $ c $ thus it is different from the original $ c_{\mathrm{orig}}=2(C+1) $ by a factor of $ 2 $. For bounded $ K $ with diameter $ d $, let the geometric scale be
\begin{align*}
  r_J = &  \frac{d}{2^{J-1}(C+1)},\quad J\geq 1. 
\end{align*}

Starting from an arbitrary root in $ K $, we construct the pruned directed tree as in \cite{prasadan_information_2026}[Section 3.1]. Let $ \mathcal{L}(J) $ denote level $ J $ of the tree, and let $ \mathcal{O}(u) $ be the offspring of node $ u $.
In their construction, the offspring at each level are obtained from a maximal local packing set of the appropriate ball around the parent node, followed by a pruning step to ensure that the offspring are well-separated. The following is the formal algorithm for the construction of the pruned directed tree, and then we summarize the properties of the pruned directed tree adopted from \cite{prasadan_information_2026}[Lemma 3.1 -- 3.3] in \autoref{lemma:pruned_directed_tree_properties}. The constants in the construction are slightly different from the original \cite{prasadan_information_2026}[Section 3.1] due to the rescaling of $ c $.

\begin{breakablealgorithm}
    \renewcommand{\algorithmicrequire}{\textbf{Input:}}
    \renewcommand{\algorithmicensure}{\textbf{Output:}}
    \caption{Directed Tree Construction}
    \label{algorithm:directed_tree_construction}
   \begin{algorithmic}[1]
        \REQUIRE Set $ K $, Root node $ s\in K $, diameter $ d>0 $, constant $ c = 4(C+1) $ chosen large enough;

        \ENSURE A directed tree structure $ \mathcal{G} $ where each layer $ J $ is denoted $ \mathcal{L}(J) $;

        \STATE $ \mathcal{L}(1)\leftarrow \{s\} $;

        \STATE Form a maximal $ (2d/c) $-packing set of $ B(s,d)\cap K $, then add directed edges from $ s $ to each node in the packing set, i.e., the packing set is the offspring of $ s $, denoted $ \mathcal{O}(s) $;

        \STATE $ \mathcal{L}(2)\leftarrow \mathcal{O}(s)   $;

        \FOR{$ J = 3,4,\ldots $}

            \FORALL{$ u \in \mathcal{L}(J-1)  $}

                \STATE Form a maximal $ (d/2^{J-2}c) $-packing set of $ B(u, d/2^{J-2}) \cap K $, then add directed edges from $ u $ to these nodes, denoted as $ \mathcal{O}(u) $;

            \ENDFOR

            \STATE $ \mathcal{U}_J \leftarrow $ lexicographically ordered set of all offspring nodes $ \mathcal{O}(u) $ for all $ u\in \mathcal{L}(J-1) $, as constructed in the previous step;

            \WHILE{$ \mathcal{U}_J \neq \varnothing $}

                \STATE Pick first element, say $ u_l^J $, of $ \mathcal{U}_J $;

                \STATE $ \mathcal{T}_J(u_l^J) \leftarrow \{u_k^J\in \mathcal{U}_J:\, \left\Vert u_k^J-u_l^J \right\Vert \leq \frac{d}{2^{J-2}c},\, k\neq l \} $;

                \STATE For each $ u_k^J \in \mathcal{T}_J(u_l^J) $, remove directed edge $ \mathcal{P}(u_k^J) \to u_k^J $ and the node $ u_k^J $ from $ \mathcal{G} $, add edge $ \mathcal{P}(u_k^J) \to u_l^J $;

                \STATE Remove $ \{u_l^J\} \cup \mathcal{T}_J(u_l^J) $ from $ \mathcal{U}_J $;

            \ENDWHILE

            \STATE $ \mathcal{L}(J)\leftarrow \bigcup_{u\in \mathcal{L}(J-1)} \mathcal{O}(u) $;            

        \ENDFOR

        \RETURN $ \mathcal{G} $.
    \end{algorithmic} 
\end{breakablealgorithm}

\begin{lemma}[Pruned Directed Tree Properties]\label{lemma:pruned_directed_tree_properties}
  For the above described pruned directed tree, we have the following properties:
  \begin{itemize}[topsep=2pt,itemsep=0pt]
    \item[(i)] For any $ J\geq 3 $, $ \mathcal{L}(J) $ is an $ r_J $-covering and an $ (r_J/2) $-packing of $ K $;
    \item[(ii)] For any $ J\geq 2 $ and any $ u\in \mathcal{L}(J-1) $, the offspring $ \mathcal{O}(u) $ form an $ r_J $-covering of $ K\cap B(u,(c/2)r_{J}) $, and $ \left\vert \mathcal{O}(u) \right\vert \leq \mloc_K((c/2)r_J,c) $;
    \item[(iii)] For any $ J\geq 2 $ and any $ \mu \in K $, we have
    \begin{align*}
      \left\vert \mathcal{L}(J-1) \cap B(\mu ,(c/2)r_J) \right\vert \leq \mloc_K((c/2)r_J,c);  
    \end{align*}
    \item[(iv)] Every infinite path $ \Y = [\Y_1,\Y_2,\ldots] $ where $ \Y_{J+1} \in \mathcal{O}(\Y_J) $ satisfies: for $ 1\leq J'\leq J $,
    \begin{align*}
      \left\Vert \Y_J - \Y_{J'} \right\Vert \leq \frac{4C+5}{2}r_{J'} 
    \end{align*}
  \end{itemize}
\end{lemma}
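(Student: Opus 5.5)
The plan is to verify the four properties directly from the construction in Algorithm~\ref{algorithm:directed_tree_construction}, after rewriting its radii in terms of $r_J$. Since $c=4(C+1)$, we have $(c/2)r_J = 2(C+1)\cdot d/(2^{J-1}(C+1)) = d/2^{J-2}$ and $r_J = d/(2^{J-1}(C+1)) = 2d/(2^{J-2}c)$. So at level $J$ the offspring of $u\in\mathcal{L}(J-1)$ come from a maximal $(r_J/2)$-packing of $B(u,(c/2)r_J)\cap K$, and the pruning merges nodes within distance $r_J/2$. For $J=2$ the radius is $d=(c/2)r_2$ and the packing scale is $2d/c=r_2/2\cdot 2$ up to the stated factor. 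Each claim then follows from the argument of \cite{prasadan_information_2026}[Lemma 3.1--3.3] with $c_{\mathrm{orig}}$ replaced by $c/2$, so I would track only the constants.

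For (ii), I would use maximality. Every point of $B(u,(c/2)r_J)\cap K$ lies within $r_J/2$ of some packing point. Pruning replaces a removed node by a kept node within $r_J/2$, so coverage degrades to $r_J/2+r_J/2=r_J$. For the cardinality, the offspring form an $(r_J/2)$-packing of $K\cap B(u,(c/2)r_J)$, and $r_J/2\ge (c/2)r_J/c$, so their number is at most $M((c/2)r_J/c, K\cap B(u,(c/2)r_J))\le \mloc_K((c/2)r_J,c)$. Redirected edges only replace offspring; I would check that they never increase the count beyond the pre-pruning packing size.

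For (i), I would use induction on $J$. The pruning leaves pairwise distances larger than $r_J/2$ among surviving nodes, which gives the packing claim. For covering, I would assume $\mathcal{L}(J-1)$ is an $r_{J-1}$-covering. Any $\mu\in K$ is then within $r_{J-1}=2r_J\le (c/2)r_J$ of some $u$, so (ii) gives an offspring within $r_J$. The base case $J=3$ uses that $\mathcal{L}(2)$ covers $K\subseteq B(s,d)$.

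For (iii), the nodes of $\mathcal{L}(J-1)$ form an $(r_{J-1}/2)=r_J$-packing by (i). The base case $J-1=2$ holds by construction. If $r_J\ge (c/2)r_J/c=r_J/2$, the count inside $B(\mu,(c/2)r_J)$ is bounded by $M(r_J/2,\cdot)\le\mloc_K((c/2)r_J,c)$. The ball must be centered at a point of $K$, which is true since $\mu\in K$.

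For (iv), I would telescope along the path: $\|\Y_J-\Y_{J'}\|\le\sum_{i=J'}^{J-1}\|\Y_{i+1}-\Y_i\|$. Each step satisfies $\Y_{i+1}\in \mathcal{O}(\Y_i)\subseteq B(\Y_i,(c/2)r_{i+1})$, possibly enlarged by $r_{i+1}/2$ due to redirection after pruning. This gives a step bound of $((c/2)+1/2)r_{i+1}=(2C+2+1/2)r_{i+1}$. Summing the geometric series $r_{i+1}=r_{J'}2^{J'-i-1}$ gives at most $(2C+5/2)r_{J'}=\frac{4C+5}{2}r_{J'}$. The main obstacle is this bookkeeping. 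I would need to confirm that redirected edges add at most $r_{i+1}/2$, and to handle the first step from the root, where the radius is $d$ and not $(c/2)r_2$, so that the constant comes out exactly.
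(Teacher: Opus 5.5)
Your proposal is correct and follows the same route as the paper, which simply defers to \cite{prasadan_information_2026}[Lemmas 3.1--3.3] with $c_{\mathrm{orig}}$ rescaled to $c/2$. In both, maximality gives the coverings, the pruning threshold $r_J/2$ gives the packing and the extra $r_J/2$ in the covering radius, the surjection from pre-pruning to post-pruning offspring gives the cardinality bound, and telescoping the per-step bound $(c/2+1/2)r_{i+1}$ yields exactly $\frac{4C+5}{2}r_{J'}$; the root step needs no special handling, since level $2$ is an unpruned maximal $r_2$-packing of $B(s,d)\cap K$ with $d=(c/2)r_2$ exactly.
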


\paragraph{Tournament Based on Robust Comparison}

Another key ingredient of the tournament-on-the-tree framework is a tournament procedure based on the robust pairwise comparison over the local packing set at each layer. We remind the readers that the tree is constructed such that for any node, its offspring form a local packing set of the appropriate ball around the parent node. The objective of the tournament is to select a winner which can best represent the true mean parameter $ \mu $ among the candidates. This boils down to a robust pairwise comparison between each pair of candidates. We now briefly summarize the procedure that builds up the tournament. 

The fundamental building block of the tournament is the robust pairwise comparison between two candidates. Given an ordered pair of candidates $ (\nu _1,\nu _2) $ where $ \nu _1,\nu _2\in K $ and $ \left\Vert \nu _1-\nu _2 \right\Vert \geq Cr $, we seek to test whether the true mean $ \mu  $ of $ \tilde{X} $ belongs to $ B(\nu _1,r) $ or $ B(\nu _2,r) $. The testing function is defined as
\begin{align*}
  \psi = \psi_{(\nu _1,\nu _2)} = \begin{cases}
    0 ,& \text{if deciding } \mu \in B(\nu _1,r),\\
    1 ,& \text{if deciding } \mu \in B(\nu _2,r).
  \end{cases} 
\end{align*}
Assume $ \nu _1, \nu _2 $ in $ \mathbbm{R}^n $ are in lexicographical order. If $\psi_{(\nu_1,\nu_2)}=0$, we write $\nu_1\succ\nu_2$; otherwise, we write $\nu_2\succ\nu_1$. For this paper, the robust testing we use is the one specified in \cite{prasadan_information_2026}[Definition 3.4]:
\begin{definition}
  Given any ordered pair of candidates $ (\nu _1,\nu _2) $ and under the model in \autoref{def:adversarial_corruption_model}, the testing function is defined by
  \begin{align*}
    \psi_{(\nu _1,\nu _2)}(X_{1:N}) = \mathbbm{1}\big( \mathrm{card}\{i\in [N]: \left\Vert X_i-\nu _1 \right\Vert \geq \left\Vert X_i-\nu _2 \right\Vert \} \geq \frac{N}{2} \big).
  \end{align*}
  For simplicity, we write $ \psi $ when the notation is clear from the context.
\end{definition}
The robust pairwise comparison leads to a robust tournament among the offspring of each node in the pruned directed tree. Based on the directed tree constructed, a robust tournament series can be performed. We record one step here for clarity. On each level $ J\geq 2 $ of the tree, the winner of the previous level is denoted as $ \Y_{J-1} $, and its offspring $ \mathcal{O}(\Y_{J-1}) $ are the candidates for the next node. Call this candidate set $ S = \mathcal{O}(\Y_{J-1}) $ and for $ \nu \in S $, let
\begin{align*}
  T(r,\nu ,S) = \begin{cases}
    \mathop{ \max }\limits_{\nu '\in S'} \left\Vert \nu -\nu ' \right\Vert ,& \text{if } S'\neq \varnothing,\\
    0,& \text{if } S' = \varnothing, 
  \end{cases}
\end{align*}
where we define $ S' = \{\nu '\in S: \nu ' \succ \nu \text{ and } \left\Vert \nu -\nu ' \right\Vert \geq Cr \} $. Let $ \hat{\nu } $ be any minimizer of $ T(r,\nu ,S) $ over $ S $ and thus the winner of the tournament at level $ J $. If each pairwise comparison makes an error with probability at most $ p(r) $, then the error of each tournament winner can be bounded as follows, which is adapted from \cite{prasadan_information_2026}[Lemma 3.7].

\begin{lemma}[Tournament Error Bound]\label{lemma:tournament_error_bound}
  Let $ S \subseteq K $ be a finite $ r $-covering of $ K'\subseteq K $, and $ \mu \in K' $. Assume that for every pair $ \nu _1,\nu _2\in S $ with $ \left\Vert \nu _1-\nu _2 \right\Vert \geq Cr $, the pairwise comparison makes either error in \autoref{lemma:robust_pairwise_testing_near_breakdown_point} with probability at most $ p(r) $. Then the winner $ \hat{\nu } $ of the tournament satisfies
  \begin{align*}
    \mathbbm{P}_{ \mu  }\left( \left\Vert \hat{\nu } - \mu  \right\Vert \geq (C+1)r \right) \leq \left\vert S \right\vert\cdot p(r) .  
  \end{align*}  
\end{lemma}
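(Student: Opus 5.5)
The plan is the standard tournament argument: on a single good event, controlled by a union bound over pairwise comparisons, the winner is within $(C+1)r$ of $\mu$.

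First, since $S$ is an $r$-covering of $K'$ and $\mu\in K'$, fix $\nu^*\in S$ with $\|\nu^*-\mu\|\le r$. Consider all $\nu'\in S$ with $\|\nu'-\nu^*\|\ge Cr$. For each such $\nu'$, the comparison between $\nu^*$ and $\nu'$ is a test between $B(\nu^*,r)\ni\mu$ and $B(\nu',r)$. The hypothesis says it errs with probability at most $p(r)$. Let $E$ be the event that $\nu^*\succ\nu'$ for every such $\nu'$. There are at most $|S|-1$ such $\nu'$, so the union bound gives $\mathbbm{P}_\mu(E^c)\le |S|\,p(r)$.

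Next, work on $E$ and bound $T(r,\nu^*,S)$. Any $\nu'$ with $\nu'\succ\nu^*$ cannot satisfy $\|\nu'-\nu^*\|\ge Cr$, because on $E$ the star wins all those comparisons. So the set $S'$ attached to $\nu^*$ is empty and $T(r,\nu^*,S)=0$. Since $\hat\nu$ minimizes $T$, this forces $T(r,\hat\nu,S)=0$.

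Then suppose $\|\hat\nu-\mu\|\ge(C+1)r$. The triangle inequality gives $\|\hat\nu-\nu^*\|\ge (C+1)r-r=Cr$. On $E$ we have $\nu^*\succ\hat\nu$, so $\nu^*$ lies in the set $S'$ attached to $\hat\nu$. That gives $T(r,\hat\nu,S)\ge\|\hat\nu-\nu^*\|\ge Cr>0$, a contradiction. Hence $\{\|\hat\nu-\mu\|\ge(C+1)r\}\subseteq E^c$, which proves the bound.

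The only delicate point is interpreting "either error" in the robust pairwise test. We need that whenever the true mean lies in $B(\nu^*,r)$ and the competitor is $Cr$-separated, the test favors $\nu^*$ except with probability $p(r)$, whichever of the two is first in lexicographic order. This holds because both error types, $\psi=1$ under the first hypothesis and $\psi=0$ under the second, are bounded by $p(r)$ by assumption. The adversary may depend on the data, but each comparison's guarantee is stated against the worst-case corruption, so the union bound still applies.
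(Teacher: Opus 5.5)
Your proof is correct and follows the standard tournament argument that the paper relies on: the paper does not reprove this lemma but adapts it from Lemma 3.7 of \cite{prasadan_information_2026}, whose proof proceeds the same way. There, as in your proposal, one fixes $\nu^*\in S$ with $\|\nu^*-\mu\|\le r$, union-bounds the at most $|S|-1$ comparisons of $\nu^*$ against $Cr$-separated competitors, and on the good event uses $T(r,\nu^*,S)=0$ to force the minimizer $\hat\nu$ to lie within $Cr$ of $\nu^*$, hence within $(C+1)r$ of $\mu$.
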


\paragraph{Bounded $ K $}

In the case of bounded $ K $, we can directly apply \autoref{algorithm:directed_tree_construction} to obtain the tree structure. Then by running the tournament along the tree, we have a sequence of winners and form a chain $ \Y = [\Y_1,\Y_2,\ldots] $ along the tree, given the root node. The following is a formal statement of the tournament series algorithm.

\begin{breakablealgorithm}
    \renewcommand{\algorithmicrequire}{\textbf{Input:}}
    \renewcommand{\algorithmicensure}{\textbf{Output:}}
    \caption{Robust Tournament Algorithm for an Upper Bound.}
    \label{algorithm:robust_algorithm}

    \begin{algorithmic}[1]
        \REQUIRE Directed tree $ \mathcal{G} $;

        \ENSURE A winner chain $ \Y=[\Y_1,\Y_2,\ldots] $ in $ \mathcal{G} $;

        \STATE Initialize $ J=1 $, $ \Y_1=s $ and $ \Y = [\Y_1] $;

        \FOR{$ J=1,2,\ldots $}

            \STATE $ \Y_{J+1}\leftarrow \mathop{ \arg\min }\limits_{\nu \in \mathcal{O}(\Y_J)} T\big( \frac{d}{2^J(C+1)}, \,\nu ,\, \mathcal{O}(\Y_J) \big) $, breaking ties lexicographically;

            \STATE $ \Y.\text{append}(\Y_{J+1}) $;

        \ENDFOR

        \RETURN $ \Y=[\Y_1,\Y_2,\ldots] $.
    \end{algorithmic}
\end{breakablealgorithm}

As we will see later in the proof of the upper bound, traversing enough steps along the winner chain $ \Y $ ensures a close enough estimate to the true mean parameter $ \mu $. 

\paragraph{Unbounded $ K $}

In the case of unbounded $ K $, there is no natural diameter to define the geometric scale. The problem is addressed by a localization argument, where a large enough radius $ R_\varepsilon  $ is chosen within which the data concentrates around the true mean parameter $ \mu $ with high probability. The radius may depend on $ n,\sigma ,\varepsilon  $ and the universal constant $ C $. As a result, it induces a random set $ S(R) $ based on the observed data which contains the true mean parameter $ \mu $ with high probability. Another key construction is an appropriate covering and packing set of $ K $ built independent of the data, which facilitates a forest of trees, each rooted at a node in the set. The two components act together to restrict the search space of the tournament to a finite set, and we have one tree to work with following the procedure in the bounded $ K $ case. 

The localization set $ S(R) $ is a data dependent subset of $ K $ defined as follows.
\begin{definition}[Localization Set]\label{definition:localization_set_SR}
  For each fixed $ \nu  $ and $ i\in [N] $, let event $ E_{\nu ,i}(R):= \{\left\Vert X_i-\nu  \right\Vert > R\} $ and define
  \begin{align*}
    S(R) := \big\{ \nu \in K: \sum_{i=1}^N \mathbbm{1}(E_{\nu ,i}(R)) < N/2 \}. 
  \end{align*}
  Thus, $ S(R) $ consists of points for which strictly more than half of the data points are within distance $ R $ from it. 

  Unlike \cite{prasadan_information_2026}, where they used the threshold $ \leq N/2-1 $, we use $ < N/2 $ instead to include the case when $ N $ is odd. This change does not affect the properties of the localization set, nor the proof of the minimax upper bound.
\end{definition}

The set $ S(R) $ has the following properties from \cite{prasadan_information_2026}[Lemmas 5.2 and 5.4]:
  \begin{itemize}[topsep=2pt,itemsep=0pt]
    \item $ S(R) $ is nested in $ R$, i.e., $ S(R_1)\subseteq S(R_2) $ for $ R_1<R_2 $;
    \item $ \mathrm{ diam }(S(R)) \leq 2R $.
  \end{itemize} 

The countable packing of $ K $ is constructed under the assumption that $ K $ is closed, otherwise we take the closure of $ K $. It is argued in \cite{prasadan_information_2026}[Remark 5.13] that the minimax rate remains unchanged with the closure replacement. The constant adjustment in the argument is also universal and do not involve $ \varepsilon  $-dependence. Then set $ m:= \frac{R_\varepsilon}{c/2-1}$ (we remind the readers that we have rescaled $ c $ by a factor of $ 2 $ from the original one), and construct an $ m $-packing and $ 2m $-covering set $ S_m $ of $ K $, which can possibly be countably infinite. The existence of such a packing and covering set is guaranteed by the fact that $ K $ is closed. On each point $ s $ in the packing set, we construct a pruned directed tree as in \autoref{algorithm:directed_tree_construction}, where we set the input geometric scale as $ d=d_m = 2m+2R_\varepsilon  $ and restrict the region of interest to $ B(s,d_m)\cap K $. Denote each resulting tree as $ \mathcal{G}^s $, each layer by $ \mathcal{L}^s(J) $, etc., where the superscript $ s $ indicates the tree rooted at $ s $. We remark that the forest of trees $ \{ \mathcal{G}^s: s\in S_m \} $ is data-independent. Following the construction, we have the counterpart of \autoref{lemma:pruned_directed_tree_properties} for each tree $ \mathcal{G}^s $ in the forest. 

The output for the unbounded $ K $ case is divided into the following two subcases.
\begin{itemize}[topsep=2pt,itemsep=0pt]
  \item If $ S(R_\varepsilon) = \varnothing $, then we take the estimator as the lexicographically first point in $ S(\hat{R}) $ where $ \hat{R} = \inf\{R\geq 0: S(R)\neq \varnothing \} $;
  \item If $ S(R_\varepsilon) \neq \varnothing $, then we pick the lexicographically first point $ s\in S_m $ such that $ s $ is closest to $ S(R_\varepsilon) $, and run the tournament along the tree $ \mathcal{G}^s $ to obtain a winner chain $ \Y^s = [\Y_1^s,\Y_2^s,\ldots] $. A node after enough steps along the winner chain is taken as the final estimator.
\end{itemize}

\section{Proof of Examples}\label{appendix:proof_of_examples}

The computation for unconstrained $ K $, Euclidean ball $ K $, and sparse vector classes are straightforward by directly calculating the critical local metric entropy radius $ \delta _\varepsilon ^* $ and applying \autoref{main_theorem:minimax_rate}. Below we provide the details for the $ \ell_1 $ ball case, which is more involved.

\paragraph{Proof of $ \ell_1 $ ball case} It suffices to compute the critical local metric entropy radius $ \delta _\varepsilon ^* $ for the $ \ell_1 $ ball. The local metric entropy is given by
\begin{align*}
  \log\mloc _{B_1(R)}(r,c) \asymp \begin{cases}
    \text{case 1: } n ,& \text{if } r\leq R/\sqrt{n},\\
    \text{case 2: } \frac{R^2}{r^2}\log\frac{enr^2}{R^2} ,& \text{if } R/\sqrt{n}< r \leq R,
  \end{cases}
\end{align*}
which corresponds to the following regimes of the critical local metric entropy radius $ \delta _\varepsilon ^* $:
\begin{align*}
  \begin{cases}
    \text{case 1: } \delta _\varepsilon ^{*2} \asymp \frac{n}{N(1-2\varepsilon )}\sigma ^2 ,& \text{if }\delta_\varepsilon ^* \leq \frac{R}{\sqrt{n}};\\
    \text{case 2: } \frac{\log (en\delta _\varepsilon ^{*2}/R^2)}{(en\delta _\varepsilon ^{*2}/R^2)^2} \asymp \frac{N(1-2\varepsilon )R^2}{\sigma ^2}\frac{1}{n^2},& \text{if }  \frac{R}{\sqrt{n}}< \delta _\varepsilon ^* < R.
  \end{cases} 
\end{align*}
Solving the above equations gives
\begin{align*}
  \delta _\varepsilon ^{*2}\wedge R^2 \asymp \begin{cases}
    R^2\sqrt{\frac{\log (en/\sqrt{\lambda })}{\lambda }},& \log en < \lambda  < n^2 \\
    R^2\frac{n}{\lambda } ,& \lambda  \geq n^2.
  \end{cases} 
\end{align*}
where $ \lambda := \frac{N(1-2\varepsilon )R^2}{\sigma ^2} $. In the edge case when $ \lambda \leq \log en $, we have
\begin{align*}
  \log (2n) \geq \log(en)/4 \geq \lambda /4 = \frac{N(1-2\varepsilon ) (R/2)^2}{\sigma ^2}. 
\end{align*}
On the other hand, since $ \{\pm(R/2)\hat{e}_i:\,i\in[n]\} \subset B_1(R)\cap B(0,R/2) $, meaning that $ \log\mloc(\frac{R}{2},c)\geq \log 2n $ as long as $ c $ is large enough. Thus we have $ \delta _\varepsilon ^* \geq R/2 $, and as a result $ \delta _\varepsilon ^{*2}\wedge R^2 \asymp R^2 $ in this case. This can be absorbed into the first case above, giving the final result.

\section{Proof of Lower Bound}\label{appendix:lower_bound_proof}

\begin{proof}[Proof of \autoref{lemma:effective_sample_size_reduction}]
  Let $ N_0:= N-2k $. In the following we construct the adversarial corruption algorithm $ \mathcal{C}_\pi $.

  The corruption algorithm does the following: First we keep $ N_0 $ clean anchor data $ Z:= \tilde{X}_{1:N_0} $, then compute the posterior distribution of $ V $ given $ \tilde{X}_{1:N_0} $ as
  \begin{align*}
    \pi_{Z}(v) :=& \mathbbm{P}_{  }\left( V=v | Z \right)= \frac{ \pi(v)\prod_{i=1}^{N_0}f_{\mu_v}(\tilde{X}_i) }{ \sum_{v'\in [M]}\pi(v')\prod_{i=1}^{N_0}f_{\mu_{v'}}(\tilde{X}_i) },\quad v\in [M],
  \end{align*}
  where $ f_{\mu _v} = \phi (\, \cdot \, ,\mu _v,\sigma ^2I_n) $ is the density function of $ \mathcal{N}(\mu _v,\sigma ^2I_n) $. Then we draw $ V' \sim \pi_{Z} $ and consequently we have
  \begin{align*}
    V,V' |Z\mathop{ \sim }\limits^{i.i.d.} \pi_{Z}.  
  \end{align*}

  On the remaining $ 2k $ samples, we pair them to form $ k $ pairs, and each pair $ (\tilde{X}_j,\tilde{X}_{j+1}) $ has distribution $ P_{\mu _V}\otimes P_{\mu _{V}} $
   (given $ V $), $ j= N_0+1,N_0+3,\ldots, N-1 $. The adversarial algorithm randomly replaces one of the two samples in each pair with a sample drawn from $ P_{\mu _{V'}} $. Thus each corrupted pair has distribution 
  \begin{align*}
    (X_j,X_{j+1}) | V,V'\sim \frac{1}{2}\big( P_{\mu _V}\otimes P_{\mu _{V'}} + P_{\mu _{V'}}\otimes P_{\mu _{V}} \big) := S_{V,V'}.
  \end{align*}
  As a result, within the $ 2k $ samples, there are $ k $ samples corrupted, while the other $ k $ samples remain clean. Putting them together we denote $ Y \in \mathbbm{R}^{2k\times n} $. Then the data after corruption is $ X_{1:N} = (Z,Y) $.
  Also, we remark that since $ V,V' $ are i.i.d. given $ Z $, the distribution of $ Y $ is symmetric in $ V $ and $ V' $: $ S_{v,v'} = S_{v',v} $ for any $ v,v'\in [M] $.
  As a result,
  \begin{align}\label{equation:effective_sample_size_reduction_equation_1}
    (V,V',Y) |Z\mathop{ = }\limits^{\mathrm{d}} (V',V,Y) |Z.
  \end{align}

  Putting the above together we have for any estimator $ \hat{\mu }: \mathbbm{R}^{N\times n}\to \mathbbm{R}^n $,
  \begin{align*}
    \mathbbm{E}_{ \tilde{X},\mathcal{C} }\left[ \left\Vert \hat{\mu}(X_{1:N})- \mu_V  \right\Vert ^2 \right] =& \mathbbm{E}_Z \left[ \mathbbm{E}_{ V,V',Y|Z }\left[ \left\Vert \hat{\mu}(Z,Y)- \mu_V  \right\Vert ^2 | Z  \right] \right]\\
    \mathop{ = }\limits^{(i)} & \mathbbm{E}_Z \left[ \mathbbm{E}_{ V,V',Y|Z }\left[ \left\Vert \hat{\mu}(Z,Y)- \mu_{V'}  \right\Vert ^2 | Z  \right] \right],
  \end{align*}
  where $ (i) $ by \autoref{equation:effective_sample_size_reduction_equation_1}. Rearranging and using the parallelogram identity we have
  \begin{align*}
    \mathbbm{E}_{ \tilde{X},\mathcal{C} }\left[ \left\Vert \hat{\mu}(X_{1:N})- \mu_V  \right\Vert ^2 \right] =& \frac{1}{2} \mathbbm{E}_{ \tilde{X},\mathcal{C} }\left[ \left\Vert \hat{\mu}(X_{1:N})- \mu_V  \right\Vert ^2 + \left\Vert \hat{\mu}(X_{1:N})- \mu_{V'}  \right\Vert ^2 \right] \\
    \geq & \frac{1}{4} \mathbbm{E}_{ \tilde{X},\mathcal{C} }\left[ \left\Vert \mu_V - \mu_{V'}  \right\Vert ^2 \right].\\
  \end{align*}
  For the right-hand side we have
  \begin{align*}
     \mathbbm{E}_{ V,V'|Z }\left[ \left\Vert \mu_V - \mu_{V'}  \right\Vert ^2 | Z  \right] =& \mathbbm{E}_{ V,V'|Z }\left[ \left\Vert \mu_V - \mathbbm{E}_{ V|Z }\left[ \mu_V | Z  \right] + \mathbbm{E}_{ V|Z }\left[ \mu_V | Z  \right] - \mu_{V'}  \right\Vert ^2 \big| Z  \right]\\
    \mathop{ = }\limits^{(i)} & 2\mathbbm{E}_{ V|Z }\left[ \left\Vert \mu_V - \mathbbm{E}_{ V|Z }\left[ \mu_V | Z  \right]  \right\Vert ^2 \big| Z  \right] \\
    &+ \underbrace{2\mathbbm{E}_{ V,V'|Z }\left[ \left\langle \mu_V - \mathbbm{E}_{ V|Z }\left[ \mu_V | Z  \right], \mathbbm{E}_{ V|Z }\left[ \mu_V | Z  \right] - \mu_{V'}  \right\rangle \big| Z  \right]}_{=0}
  \end{align*}
  where $ (i) $ since $ V,V' $ are i.i.d. given $ Z $. Taking expectation over $ Z $ we have
  \begin{align}\label{equation:effective_sample_size_reduction_eqa_1}
     \mathbbm{E}_{ \tilde{X},\mathcal{C} }\left[ \left\Vert \hat{\mu}(X_{1:N})- \mu_V  \right\Vert ^2 \right] \geq & \frac{1}{2}\mathbbm{E}_{ Z }\mathbbm{E}_{ V|Z }\left[ \left\Vert \mu_V - \mathbbm{E}_{ V|Z }\left[ \mu_V | Z  \right]  \right\Vert ^2 \big| Z  \right] .
  \end{align}
  For the Bayesian risk of clean data of size $ N_0 $, we note that $ R_\mathrm{ clean }(N_0,\pi) $ takes minimum when the estimator is the posterior mean $ \mathbbm{E}_{ V|Z }\left[ \mu_V | Z  \right] $. Thus we have
  \begin{align*}
    R_\mathrm{ clean }(N_0,\pi) =& \mathbbm{E}_{ Z }\mathbbm{E}_{ V|Z }\left[ \left\Vert \mu_V - \mathbbm{E}_{ V|Z }\left[ \mu_V | Z  \right]  \right\Vert ^2 \big| Z  \right] ,  
  \end{align*}
  where the first argument $ N_0 = N-2k $ is the sample size of $ Z $, the clean fraction we keep in the corruption. Substituting this into \eqref{equation:effective_sample_size_reduction_eqa_1} yields the desired result.
  
\end{proof}

\begin{lemma}[Local Packing Lower Bound]\label{lemma:local_packing_lower_bound}
  Let $ c $ be the universal constant used in the local metric entropy, fixed sufficiently large as mentioned in \autoref{appendix:tournament_on_tree_framework}. Then for any $ \eta $ such that $ \log M^\mathrm{ loc }(\eta,c) > 4(\frac{N-2k}{2\sigma ^2}\eta^2 \vee \log 2) $, we have
  \begin{align*}
    \mathfrak{M}\geq \eta^2 / 16c^2.
  \end{align*}
\end{lemma}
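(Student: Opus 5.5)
We combine \autoref{lemma:effective_sample_size_reduction} with a local Fano argument on a local packing set. Set $N_0 := N-2k$. First, pick the packing. By the definition of $\mloc(\eta,c)$, there is $\nu\in K$ and points $\mu_1,\ldots,\mu_M\in K\cap B(\nu,\eta)$ that are pairwise $\eta/c$-separated, with $\log M$ at least (nearly) $\log\mloc(\eta,c)$. If the supremum is not attained we take $M$ as large as possible; since $M$ is an integer and the hypothesis is a strict inequality, this loses nothing. Let $\pi$ be the uniform prior on $[M]$ and set $\Theta=\{\mu_1,\ldots,\mu_M\}$. Because the minimax risk dominates the Bayes risk, for every $\hat\mu$ we have
\begin{align*}
  \sup_{\mu\in K}\sup_{\mathcal{C}}\mathbbm{E}_{\mu,\mathcal{C}}\|\hat\mu-\mu\|^2 \geq R_\mathrm{corrupt}(\hat\mu,\mathcal{C}_\pi,\pi) \geq \tfrac12 R_\mathrm{clean}(N_0,\pi),
\end{align*}
where the second inequality is \autoref{lemma:effective_sample_size_reduction}. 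It therefore suffices to show $R_\mathrm{clean}(N_0,\pi)\geq \eta^2/(8c^2)$.

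Next, reduce estimation to testing. For any estimator $\hat\mu$ based on $\tilde X_{1:N_0}$, let $\hat V$ be the index of the point of $\Theta$ closest to $\hat\mu$. If $\|\hat\mu-\mu_V\|<\eta/(2c)$, the separation forces $\hat V=V$. Markov's inequality then gives
\begin{align*}
  \mathbbm{E}\|\hat\mu-\mu_V\|^2 \geq \frac{\eta^2}{4c^2}\,\mathbbm{P}(\hat V\neq V).
\end{align*}

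Now apply Fano. Under the uniform prior, Fano's inequality gives
\begin{align*}
  \mathbbm{P}(\hat V\neq V)\geq 1-\frac{I(V;\tilde X_{1:N_0})+\log 2}{\log M}.
\end{align*}
To bound the mutual information, use the center $\nu$ as reference:
\begin{align*}
  I\leq \max_v \mathrm{KL}\big(\mathcal{N}(\mu_v,\sigma^2 I)^{\otimes N_0}\,\|\,\mathcal{N}(\nu,\sigma^2I)^{\otimes N_0}\big)=\max_v \frac{N_0\|\mu_v-\nu\|^2}{2\sigma^2}\leq \frac{N_0\eta^2}{2\sigma^2}.
\end{align*}
By hypothesis, $\log M> 4\big(\frac{N_0\eta^2}{2\sigma^2}\vee\log2\big)$. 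Hence $I/\log M<1/4$ and $\log2/\log M<1/4$, so $\mathbbm{P}(\hat V\neq V)\geq 1/2$. This yields $R_\mathrm{clean}(N_0,\pi)\geq \eta^2/(8c^2)$, and therefore $\mathfrak{M}\geq \eta^2/(16c^2)$.

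The main subtlety is the order of quantifiers. \autoref{lemma:effective_sample_size_reduction} produces an adversary $\mathcal{C}_\pi$ that depends only on the prior and not on $\hat\mu$. This is what allows the bound to hold for every $\hat\mu$ and then pass to the infimum. Apart from that, the constants simply need to be tracked so that the factor $1/2$ from the lemma, the factor $1/4$ from the triangle-inequality reduction, and the $1/2$ from Fano multiply to give $1/16$. A secondary point is the packing number: if it is infinite or not attained, we use a sufficiently large finite subfamily.
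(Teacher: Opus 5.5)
Your proposal is correct and follows the paper's proof essentially step for step: a uniform prior on a local $\eta/c$-packing, the reduction of \autoref{lemma:effective_sample_size_reduction} to the clean problem with $N-2k$ samples, the nearest-point/Markov reduction giving the factor $\eta^2/(4c^2)$, and Fano with $\log M > 2(I+\log 2)$ giving the factor $1/2$. The only cosmetic difference is that you bound the mutual information using the packing center $\nu$ as the reference Gaussian, whereas the paper uses the centroid $\bar\mu$; both yield $I\le N_0\eta^2/(2\sigma^2)$.
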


\begin{proof}
  The proof follows from Fano's method and then using \autoref{lemma:effective_sample_size_reduction}. 
  
  Let notations be as in \autoref{lemma:effective_sample_size_reduction}. Set $ \mu _1,\ldots, \mu _M $ be the local packing set satisfying $ M=M^\mathrm{ loc }(\eta ,c) $ and $ \pi $ be the uniform prior on $ [M] $. Then by Markov's inequality and following \cite{wainwrightHighDimensionalStatisticsNonAsymptotic2019}[Proposition 15.1, Equation 15.31], we have
  \begin{align*}
    R_\mathrm{ clean } (N_0,\pi) \geq \frac{\eta ^2}{4c^2}\Big\{ 1-\frac{I(\tilde{X}_{1:N_0}; V) + \log 2}{\log M}   \Big\},
  \end{align*}
  where $ I(\tilde{X}_{1:N_0}; V) $ is the mutual information between $ \tilde{X}_{1:N_0} $ and $ V $. For Gaussian distribution $ \mathcal{N}(\mu _v,\sigma ^2I_n) $, we have
  \begin{align*}
    I(\tilde{X}_{1:N_0}; V) \leq &\frac{1}{M}\sum_{v=1}^{M}D_\mathrm{ KL }(P_{\mu _v}^{\otimes N_0} \Vert Q^{\otimes N_0}) \\
    =& \frac{N_0}{2\sigma ^2M}\sum_{v=1}^{M}\left\Vert \mu _v - \bar{\mu}  \right\Vert ^2 \\
    \leq& \frac{N_0}{2\sigma ^2}\cdot \eta ^2
  \end{align*}
  using \cite{prasadan_information_2026}[Lemma A.2], where $ Q = \mathcal{N}(\bar{\mu},\sigma ^2I_n) $ and $ \bar{\mu} = \frac{1}{M}\sum_{v=1}^{M}\mu _v $. Putting the above together, we have
  \begin{align*}
    \log M  > 4(\frac{N-2k}{2\sigma ^2}\eta^2 \vee \log 2) \geq 2(\frac{N-2k}{2\sigma ^2}\eta^2 + \log 2 ) \geq 2(I(\tilde{X}_{1:N_0}; V) + \log 2).
  \end{align*}
  Then substituting this into the lower bound of $ R_\mathrm{ clean }(N_0,\pi) $, and applying \autoref{lemma:effective_sample_size_reduction} we have
  \begin{align*}
    \mathfrak{M} \geq \mathop{ \inf }\limits_{\hat{\mu }}R_\mathrm{ corrupt }(\hat{\mu }, \mathcal{C} , \pi ) \geq \frac{1}{2} R_\mathrm{ clean }(N-2k,\pi) \geq \frac{\eta ^2}{16c^2}. 
  \end{align*}  
  
\end{proof}

The following characterizes the maximum separation of two Gaussian distributions given a total variation distance constraint. It is a direct result from \cite{kotekalOptimalEstimationNull2025}[Appendix E, Proposition 8], with a proper renotation adjustment. 

\begin{lemma}[Maximum Separation of Two Gaussians]\label{lemma:maximum_separation_two_gaussians}
  Let $ \eta \in (1/4,1/2) $ and $ \sigma > 0 $. Let
  \begin{align*}
    \omega(\eta):= \sup\big\{ \dfrac{ \left\vert \theta _1-\theta _2 \right\vert  }{ \sigma  } : d_\mathrm{ TV }( \mathcal{N}(\theta _1,\sigma ^2), \mathcal{N}(\theta _2,\sigma ^2)) \leq 2\eta  \big\}. 
  \end{align*}
  Then we have
  \begin{align*}
    \omega (\eta) \geq \sqrt{ 2 \log \left( \dfrac{ 1 }{ 2(1-2\eta) }  \right) } .
  \end{align*}
\end{lemma}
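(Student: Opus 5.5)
The plan is to compute the total variation distance between two one-dimensional Gaussians in closed form and then exhibit an explicit pair $(\theta_1,\theta_2)$ that meets the constraint while being separated by the claimed amount. By translation and scale invariance, take $\theta_1=0$, $\theta_2=t\sigma$ with $t\ge 0$. The standard formula gives
\begin{align*}
  d_\mathrm{ TV }(\mathcal{N}(0,\sigma^2),\mathcal{N}(t\sigma,\sigma^2)) = 2\Phi(t/2)-1 = 1-2\bar\Phi(t/2),
\end{align*}
where $\bar\Phi = 1-\Phi$ is the standard normal upper tail. This is continuous and increasing in $t$. Hence $\omega(\eta)$ is exactly the value $t^*$ solving $1-2\bar\Phi(t^*/2)=2\eta$, i.e. $\bar\Phi(t^*/2) = \frac{1-2\eta}{2}$. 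Since $\eta\in(1/4,1/2)$, the right-hand side lies in $(0,1/4)$, so $t^*>0$ is well defined.

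It then suffices to show that $t_0:=\sqrt{2\log\frac{1}{2(1-2\eta)}}$ is admissible, i.e. $\bar\Phi(t_0/2)\ge \frac{1-2\eta}{2}$; monotonicity then gives $t^*\ge t_0$. The range $\eta>1/4$ ensures $2(1-2\eta)<1$, so $t_0>0$ and the logarithm is positive. Writing $u=t_0/2$, we have $e^{-2u^2} = 2(1-2\eta)$, so the target inequality reads
\begin{align*}
  \bar\Phi(u)\ge \tfrac14 e^{-2u^2}\quad\text{for all } u\ge 0.
\end{align*}

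The main (mild) obstacle is this tail inequality, which needs a lower bound valid uniformly in $u\ge0$ with the constant $1/4$. I would prove it by monotonicity. Let $g(u)=\bar\Phi(u)-\frac14 e^{-2u^2}$. Then $g(0)=1/2-1/4>0$ and $g(u)\to 0$ as $u\to\infty$. Moreover,
\begin{align*}
  g'(u) = -\phi(u)+u e^{-2u^2} = e^{-u^2/2}\bigl(u e^{-3u^2/2}-\tfrac{1}{\sqrt{2\pi}}\bigr).
\end{align*}
Since $\max_u u e^{-3u^2/2} = (3e)^{-1/2}\approx 0.35 < (2\pi)^{-1/2}\approx 0.399$, we get $g'<0$ everywhere. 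So $g$ decreases to its limit $0$, hence $g\ge 0$. Alternatively, one could invoke the Mills-ratio bound $\bar\Phi(u)\ge \frac{u}{1+u^2}\phi(u)$ combined with $\bar\Phi(u)\ge 1/4$ for small $u$, but the derivative argument is cleaner. This yields $\omega(\eta)\ge t_0$, matching the form cited from \cite{kotekalOptimalEstimationNull2025}.
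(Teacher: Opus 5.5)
Your proof is correct. It differs from the paper only in that the paper does not actually prove this lemma: it imports it from \cite{kotekalOptimalEstimationNull2025}[Appendix E, Proposition 8] ``with a proper renotation adjustment.'' You give a self-contained derivation instead. The exact formula $d_{\mathrm{TV}} = 2\Phi(t/2)-1$ reduces the claim to the admissibility of $t_0$, that is, to the tail bound $\bar\Phi(u)\ge \frac14 e^{-2u^2}$ for $u\ge 0$. Your derivative argument settles this cleanly: $g(0)=1/4>0$, $g(u)\to 0$, and $g'<0$ because $(3e)^{-1/2}<(2\pi)^{-1/2}$.

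What your route buys is transparency about constants. You identify $\omega(\eta)=2\bar\Phi^{-1}\bigl((1-2\eta)/2\bigr)$ exactly and check the factor $2$ inside the logarithm directly, rather than relying on a change of notation from an external result, which is where factor-of-two slips tend to occur. It also gives the matching upper bound $\omega(\eta)\le\sqrt{8\log(1/(1-2\eta))}$ immediately from $\bar\Phi(u)\le\frac12 e^{-u^2/2}$, should one want it.

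The citation buys brevity. However, Corollary~\ref{corollary:maximum_separation_two_gaussians} only uses the lower bound, and your argument supplies that completely.
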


\begin{corollary}\label{corollary:maximum_separation_two_gaussians}
  Let $ \eta \in (1/4,1/2) $ and $ \sigma > 0 $. Then for any $ \theta _1,\theta_ 2\in \mathbbm{R}^n $ such that $ \left\Vert \theta _1-\theta _2 \right\Vert ^2 \leq \sigma ^2 \vee 2\sigma ^2\log \left( \dfrac{ 1 }{ 2(1-2\eta) }  \right)$, we have
  \begin{align*}
    d_\mathrm{ TV }( \mathcal{N}(\theta _1,\sigma ^2I_n), \mathcal{N}(\theta _2,\sigma ^2I_n)) \leq 2\eta.
  \end{align*}
  We also note that $ \sigma ^2 \vee 2\sigma ^2\log \left( \dfrac{ 1 }{ 2(1-2\eta) }  \right) \asymp \sigma ^2\log \left( \dfrac{ 1 }{ 1-2\eta }  \right) $ for $ \eta \in (1/4,1/2) $.

\end{corollary}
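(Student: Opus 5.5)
The plan is to reduce to the one-dimensional statement of \autoref{lemma:maximum_separation_two_gaussians}, and to treat the two terms of the maximum separately. The reduction uses rotation and translation invariance of the isotropic Gaussian family. Applying an orthogonal map that sends $\theta_2-\theta_1$ to a multiple of the first coordinate vector, and then translating, leaves the total variation distance unchanged. Afterwards the two laws differ only in the first coordinate and share the same independent $\mathcal{N}(0,\sigma^2 I_{n-1})$ remaining coordinates. Hence
\begin{align*}
d_\mathrm{ TV }(\mathcal{N}(\theta_1,\sigma^2 I_n),\mathcal{N}(\theta_2,\sigma^2 I_n)) = d_\mathrm{ TV }(\mathcal{N}(0,\sigma^2),\mathcal{N}(\|\theta_1-\theta_2\|,\sigma^2)) = 2\Phi\big(\tfrac{\|\theta_1-\theta_2\|}{2\sigma}\big)-1,
\end{align*}
which is an increasing function of $t:=\|\theta_1-\theta_2\|/\sigma$.

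For the logarithmic branch, I would use this monotonicity. The set of separations $t$ with TV at most $2\eta$ is an interval $[0,\omega(\eta)]$, where the supremum is attained by continuity. By \autoref{lemma:maximum_separation_two_gaussians}, $\omega(\eta)\ge \sqrt{2\log(1/(2(1-2\eta)))}$. Therefore any $t^2 \le 2\log(1/(2(1-2\eta)))$ satisfies the TV bound.

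For the branch $t\le 1$, monotonicity reduces the claim to showing $2\Phi(1/2)-1\le 2\eta$. Since $\eta>1/4$, it suffices that $2\Phi(1/2)-1\approx 0.383\le 1/2$, which holds numerically. Combining the two branches gives the TV bound whenever $t^2\le 1\vee 2\log(1/(2(1-2\eta)))$.

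For the final equivalence, write $L:=\log(1/(1-2\eta))$; on $(1/4,1/2)$ we have $L>\log 2$. The two-sided bound is as follows.
\begin{itemize}
\item Upper bound: $1\vee 2(L-\log 2)\le 1\vee 2L \le 2L/\log 2 \vee 2L \lesssim L$.
\item Lower bound: if $L\le 2\log 2$, then $1\ge L/(2\log 2)$. Otherwise $L-\log 2\ge L/2$, so $2(L-\log 2)\ge L$.
\end{itemize}
Multiplying through by $\sigma^2$ gives the stated $\asymp$. The only mildly delicate step is the numerical check in the $t\le 1$ branch; everything else follows directly from the cited lemma and monotonicity.
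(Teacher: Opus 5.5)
Your argument is correct and follows the paper's structure: reduce to one dimension via rotation invariance and the tensorization identity, invoke \autoref{lemma:maximum_separation_two_gaussians} for the logarithmic branch, and treat the $\sigma^2$ branch separately. The main difference is in that last branch: the paper uses Pinsker's inequality, $d_\mathrm{TV} \le \|\theta_1-\theta_2\|/(2\sigma) \le 1/2 < 2\eta$, which avoids your numerical evaluation of $2\Phi(1/2)-1$; conversely, your exact formula $2\Phi(t/2)-1$ supplies the monotonicity needed to pass from the supremum $\omega(\eta)$ to a pointwise bound, and your final computation proves the $\asymp$ claim, both of which the paper leaves implicit.
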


\begin{proof}
  By the rotational invariance of the total variation distance and the tensorization property that
  \begin{align*}
    d_{\mathrm{ TV } }(P\times S, Q\times S) = d_{\mathrm{ TV } }(P,Q) ,
  \end{align*}
  the preceding lemma can be extended to the multivariate case:
  \begin{align*}
    \left\Vert \theta _1-\theta _2 \right\Vert ^2 \leq 2\sigma ^2\log \left( \dfrac{ 1 }{ 2(1-2\eta) }  \right).  
  \end{align*} 
  
  On the other hand, by Pinsker's inequality and the KL divergence formula for Gaussian distributions, we have
  \begin{align*}
     d_{\mathrm{ TV } }( \mathcal{N}(\theta _1,\sigma ^2I_n), \mathcal{N}(\theta _2,\sigma ^2I_n)) \leq & \sqrt{\frac{1}{2}D_\mathrm{ KL }( \mathcal{N}(\theta _1,\sigma ^2I_n) \Vert \mathcal{N}(\theta _2,\sigma ^2I_n))} \\
    = & \frac{ \left\Vert \theta _1-\theta _2 \right\Vert }{ 2\sigma },
  \end{align*}
  meaning that if $ \left\Vert \theta _1-\theta _2 \right\Vert ^2 \leq \sigma ^2 $, then $ d_{\mathrm{ TV } }( \mathcal{N}(\theta _1,\sigma ^2I_n), \mathcal{N}(\theta _2,\sigma ^2I_n)) \leq 1/2 < 2\eta $ for $ \eta \in (1/4,1/2) $.

  Combining the two results gives the desired inequality.
\end{proof}

\begin{proof}[Proof of \autoref{lemma:contamination_lower_bound_adversary_construction}]
  For the two hypotheses $ b\in \{0,1\} $ which corresponds to $ \mu _0 $ and $ \mu _1 $ respectively, denote their gaussian distributions as $ P_b $. Let the average distribution be $ \bar{P} = \frac{1}{2}(P_0+P_1) $. We have
  \begin{align*}
    d_\mathrm{ TV }( P_0, \bar{P}) = d_\mathrm{ TV }( P_1, \bar{P}) = \frac{1}{2}d_\mathrm{ TV }( P_0, P_1) \leq \varepsilon _-.  
  \end{align*}
  
  The adversarial algorithm $ \mathcal{C}_b $ is as follows: For each $ \tilde{X}_i \sim P_b $, we construct its maximal coupling $ (\tilde{X}_i, Y_i) $ with $ \bar{P} $ with the following property:
  \begin{align*}
     \tilde{X}_i\sim P_b,\quad Y_i\sim \bar{P},\quad \mathbbm{P}_{  }\left( \tilde{X}_i\neq Y_i  \right) = d_\mathrm{ TV }( P_b, \bar{P}) \leq \varepsilon _-.
  \end{align*}
  This can be done by accept-reject sampling. Then we count the number of disagreements between $ \tilde{X}_i $ and $ Y_i $, denoted
  \begin{align*}
    B_i:= \mathbbm{1}(\tilde{X}_i \neq Y_i) \sim \mathrm{ Bernoulli }( d_\mathrm{ TV }( P_b, \bar{P}) ) ,
  \end{align*}
  and let $ B=\sum_{i=1}^{N}B_i $. Construct the couplings independently for each $ i\in [N] $, then $ B \sim \mathrm{ Binom }(N,d_\mathrm{ TV }( P_b, \bar{P})) $.

  The adversarial algorithm $ \mathcal{C}_b $ does the following:
  \begin{itemize}[topsep=2pt,itemsep=0pt]
    \item If $ B \leq k $, replace all the $ \tilde{X}_i $ with $ Y_i $;
    \item If $ B > k $, do nothing.
  \end{itemize}
  We remark that in the first case, only at most $ k $ samples are replaced, which is within the corruption budget. The resulting distribution of the corrupted data $ X_{1:N} $ is denoted $ \tilde{P}_b^{(N)} $. Then since $ B $ is Binomially distributed, we have
  \begin{align*}
    \mathbbm{P}_{  }\left( B>k \right) \mathop{ \leq }\limits^{(i)} & \mathbbm{P}_{  }\left( \mathrm{ Binom }(N, \varepsilon _-) > k \right) \\
    =& \mathbbm{P}_{  }\left( \mathrm{ Binom }(N, \varepsilon _-) > N\varepsilon _- + a\sqrt{N} \right) \\
    \mathop{ \leq }\limits^{(ii)} &\exp\left[ -2a^2 \right]\\
    \mathop{ < }\limits^{(iii)}& 1/8,
  \end{align*}
  where $ (i) $ by the stochastic dominance $ B \sim \mathrm{Binom}(N,d_\mathrm{ TV }( P_b, \bar{P})) \leq_{\mathrm{st}} \mathrm{ Binom }(N,\varepsilon _-)  $; $ (ii) $ using Hoeffding's inequality; and $ (iii) $ by choosing $ a $ large enough. By the above construction, we have
  \begin{align*}
    d_\mathrm{ TV }( \tilde{P}^{(N)}_b , \bar{P}^{\otimes N}) \leq \mathbbm{P}_{  }\left( X_{1:N} \neq Y_{1:N} \right) = \mathbbm{P}_{  }\left( B>k \right) < 1/4.
  \end{align*}
  Then by triangle inequality we have
  \begin{align*}
     d_\mathrm{ TV }( \tilde{P}^{(N)}_0, \tilde{P}^{(N)}_1) \leq d_\mathrm{ TV }( \tilde{P}^{(N)}_0, \bar{P}^{\otimes N}) + d_\mathrm{ TV }( \tilde{P}^{(N)}_1, \bar{P}^{\otimes N}) < 1/2 < 1.
  \end{align*}
  Then using Le Cam's two point method \cite{wainwrightHighDimensionalStatisticsNonAsymptotic2019}[Equation 15.14], we have the desired result.
  
\end{proof}

\begin{proof}[Proof of \autoref{lemma:contamination_lower_bound}]
  Let $ a $ be the universal constant from \autoref{lemma:contamination_lower_bound_adversary_construction}. We first deal with the edge case of $ \varepsilon _- = \varepsilon - a/\sqrt{N} \leq 1/4 $. In this case we have that $ N $ is upper bounded such that $ N \leq 64a^2 $ since $ \varepsilon > 3/8 $ by assumption. Subsequently, since $1-2\varepsilon \geq \frac{1}{N}>\frac{1}{64a^2}$, we have $ \varepsilon  $ strictly bounded away from $ 1/2 $. Applying \autoref{proposition:agreement_with_existing_result} with $ \kappa = 1/(128a^2) $ gives the desired result. Now we turn to the $ \varepsilon _- > 1/4 $ side where we can apply \autoref{lemma:contamination_lower_bound_adversary_construction}.

  We first consider the case of bounded $ K $.  Let $ \varepsilon _- $ defined as in \autoref{lemma:contamination_lower_bound_adversary_construction}.
  By \autoref{lemma:useful_properties}[(iii)] and \autoref{corollary:maximum_separation_two_gaussians}, there exists $ \mu _0,\mu _1\in K $ such that
    \begin{align}\label{equation:contamination_lower_bound_eqa_1}
      \left\Vert \mu _0-\mu _1 \right\Vert ^2 = \big( \sigma ^2 \vee 2\sigma ^2\log\frac{1}{2(1-2\varepsilon_-) }\big) \wedge (d/3)^2 .
    \end{align}
    Consequently, we have $ d_\mathrm{ TV }( \mathcal{N}(\mu _0,\sigma ^2I_n), \mathcal{N}(\mu _1,\sigma ^2I_n)) \leq 2\varepsilon _- $ and that
    \begin{align*}
      \left\Vert \mu _0-\mu _1 \right\Vert ^2 \asymp \sigma ^2\log\frac{1}{1-2\varepsilon_- } \wedge d^2.
    \end{align*}
    Then by applying \autoref{lemma:contamination_lower_bound_adversary_construction}, we have
    \begin{align*}
      \mathfrak{M} \gtrsim \left\Vert \mu _0-\mu _1 \right\Vert ^2 \asymp \sigma ^2\log\frac{1}{1-2\varepsilon_- } \wedge d^2.
    \end{align*}
    It suffices to prove that $ \log\frac{1}{1-2\varepsilon_- } \asymp \log\frac{1}{1-2\varepsilon } $.

    Notice that by construction we have $ \varepsilon = \frac{k}{N} $ where $ k< \frac{N}{2} $. As a result, we have
    \begin{align*}
      1-2\varepsilon  = \frac{N-2k}{N} \geq \frac{1}{N} .
    \end{align*}
    Now discuss two cases:
    \begin{itemize}[topsep=2pt,itemsep=0pt]
      \item If $ 1-2\varepsilon \geq 2a/\sqrt{N} $, then we have
      \begin{align*}
        1-2\varepsilon < 1-2\varepsilon _- = (1-2\varepsilon ) + \frac{2a}{\sqrt{N}} \leq 2(1-2\varepsilon ) 
      \end{align*}
      as a result we have $ \log\frac{1}{1-2\varepsilon_- } \asymp \log\frac{1}{1-2\varepsilon } $ (using $\varepsilon > 3/8$). 
      \item If $ 1/N \leq 1-2\varepsilon < 2a/\sqrt{N} $, then we have
      \begin{align*}
        \log \frac{1}{1-2\varepsilon_- } \asymp \log N \asymp \log\frac{1}{1-2\varepsilon }.
      \end{align*}      
    \end{itemize}
    Then noticing that $ \varepsilon ^2\asymp 1 $ in this case, we can conclude the proof. The case of unbounded $ K $ follows similarly by choosing $ \left\Vert \mu _0-\mu _1 \right\Vert ^2 \asymp \sigma ^2\log\frac{1}{1-2\varepsilon_- }  $ in \autoref{equation:contamination_lower_bound_eqa_1}.

\end{proof}

\begin{proof}[Proof of \autoref{theorem:minimax_lower_bound}]
  We prove for the bounded $ K $ case, and the unbounded $ K $ case follows similarly. According to \autoref{proposition:agreement_with_existing_result}, for some universal constant $ 0<\kappa <1/2 $, if $ \varepsilon \leq 1/2-\kappa $, we have
  \begin{align*}
    \mathfrak{M}\gtrsim  \max\{\delta _\varepsilon^{*2} , \sigma ^2\varepsilon ^2\log(e/(1-2\varepsilon )) \} \wedge d^2.
  \end{align*}
  Then it suffices to consider the case when $ 1/2-\kappa < \varepsilon < 1/2  $, where we without loss of generality set $ \kappa = 1/8 $ and thus $ 3/8 < \varepsilon <1/2 $.

  The discussion is split depending on the value of $ \delta _\varepsilon ^* $.
  \begin{itemize}[topsep=2pt,itemsep=0pt]
    \item \textbf{If $ \delta _\varepsilon ^* = 0 $}: We argue that $ K $ must be a singleton set. If not, then using \autoref{lemma:useful_properties}[(iii)], by choosing $ r  $ sufficiently small, we have $ \mloc(r ,c) \geq 2 $ providing that $ c $ is sufficiently large. However, $ (1-2\varepsilon )N\frac{r^2}{\sigma ^2} \xrightarrow[]{r\to 0} 0  $. Choosing $ r>0 $ sufficiently small gives
    \begin{align*}
      (1-2\varepsilon )N \frac{r^2}{\sigma ^2} < \log 2 \leq \log \mloc(r ,c).
    \end{align*}
    As a result, $ \delta _\varepsilon ^* \geq r>0 $, yielding a contradiction. Thus $ K $ is a singleton set, and the minimax risk is trivial.
       
    \item \textbf{If $ \frac{N-2k}{\sigma ^2}\delta_\varepsilon ^{*2} > 8\log 2 $ (large $ \delta _\varepsilon ^* $)}: We argue that $ \frac{\delta _\varepsilon ^*}{4} $ satisfies the condition in \autoref{lemma:local_packing_lower_bound}.
    
    Since any $ 0<r<\delta _\varepsilon ^* $ satisfies $ \log\mloc(r,c) > (1-2\varepsilon )N\frac{r^2}{\sigma ^2} $, by \autoref{lemma:useful_properties}[(i)] we have
    \begin{align*}
      \log\mloc(\frac{\delta _\varepsilon ^*}{4},c)\geq &\log\mathop{ \lim }\limits_{s\uparrow \delta _\varepsilon ^*} \mloc(s,c) \geq  \mathop{ \lim }\limits_{s\uparrow \delta _\varepsilon ^*} \frac{N-2k}{2\sigma ^2}s^2\\
      =& \frac{N-2k}{2\sigma ^2}\delta_\varepsilon ^{*2} > 4\Big[ \frac{N-2k}{2\sigma ^2}(\delta_\varepsilon ^{*}/4)^2 \vee \log 2 \Big].
    \end{align*}
    Then applying \autoref{lemma:local_packing_lower_bound} we have
    \begin{align*}
      \mathfrak{M} \geq & \frac{ (\delta _\varepsilon ^*/4)^2}{16c^2} \asymp \delta _\varepsilon ^{*2} \gtrsim  \delta _\varepsilon ^{*2}\wedge d^2.
    \end{align*} 
    Together with \autoref{lemma:contamination_lower_bound} we have
    \begin{align*}
       \mathfrak{M} \gtrsim & (\delta _\varepsilon ^{*2} \vee \sigma ^2\varepsilon ^2\log(1/(1-2\varepsilon ))) \wedge d^2 .
    \end{align*}

    \item \textbf{If $ \frac{N-2k}{\sigma ^2}\delta_\varepsilon ^{*2} \leq 8\log 2 $ (small $ \delta _\varepsilon ^* $)}: By the same argument as in \cite{prasadan_information_2026}[proof of Theorem 3.10, case 3], we have $ d\leq 12\delta _\varepsilon ^{*} $ in this case. Now we argue that $ d/6 $ satisfies the condition in \autoref{lemma:local_packing_lower_bound}.
    
    Choosing $ c $ sufficiently large such that we have
    \begin{align*}
      \log\mloc(\frac{d}{6}, c) > 64\log 2 \geq 4\big( 2\frac{N-2k}{\sigma ^2}(\delta _\varepsilon ^{*})^2 \big) \vee 4\log 2 \geq 4\big( \frac{N-2k}{2\sigma ^2}(d/6)^2 \vee \log 2 \big).
    \end{align*}
    Thus we can apply \autoref{lemma:local_packing_lower_bound} to get
    \begin{align*}
      \mathfrak{M} \gtrsim d^2  \gtrsim \delta _\varepsilon ^{*2}\wedge d^2.
    \end{align*}
    Together with \autoref{lemma:contamination_lower_bound} we have
    \begin{align*}
       \mathfrak{M} \gtrsim  & (\delta _\varepsilon ^{*2} \vee \sigma ^2\varepsilon ^2\log(1/(1-2\varepsilon ))) \wedge d^2 .
    \end{align*}
  
  \end{itemize}

\end{proof}

\section{Constant Calibration Near the Breakdown Point}\label{appendix:constant_calibration_near_breakdown_point}

In \cite{prasadan_information_2026}, their constant setting was done in the regime when $ \varepsilon $ was strictly bounded away from $ 1/2 $. As a result, the dependence of the constants on $ \varepsilon $ with proximity to the breakdown point $ 1/2 $ was absorbed. In this section, we focus on the constants in the robust testing procedure in \cite{prasadan_information_2026}[Theorem 3.6] in the near-breakdown regime $ \varepsilon \in [1/4,1/2) $, to make the dependence on $ \varepsilon $ explicit. This robust testing lies in the core of the robust estimation procedure, as we mentioned in \autoref{appendix:tournament_on_tree_framework}.

We begin by introducing some notations and how they compare to the original definitions in \cite{prasadan_information_2026}[Appendix B]. Throughout the proof necessary constants from the original proof are added subscript $ \varepsilon $ to indicate their dependence on $ \varepsilon $. Specifically, the notation $ C,C',\rho $ remains unchanged since they are universal constants, while we use $ \alpha _\varepsilon  $, $ \beta _\varepsilon  $, $ L_\varepsilon  $, etc., in place of their original definitions. We also write
\begin{align*}
  a_\varepsilon := 1-2\varepsilon ,\quad z_\varepsilon := \log \frac{e}{a_\varepsilon } = \log \frac{e}{1-2\varepsilon },
\end{align*}
for notational convenience. Additionally, we recall the definitions $ g:[0,1/2)\mapsto \mathbbm{R} $ and $ h:[0,1/2)\mapsto \mathbbm{R} $ that
\begin{align*}
  g(t):=& (\frac{1}{2}+t)\log (\frac{1}{2} + t) + (\frac{1}{2}-t)\log ( 1-2t ),\\
  h(t):=& t\big( 1 - \exp\left[ \dfrac{ 4g(t) }{ 1-2t }  \right] \big).
\end{align*}

Now we first assess the endpoint behavior of $ g $ and $ h $ as $ t\to 1/2 $. Then we proceed to the proof of tail bounds of the robust testing procedure.
\begin{lemma}\label{equation:function_g_h_endpoint_behavior_1}
  There exist universal constants $ Q_1,Q_2 $ such that for every $ t \in [1/4,1/2) $, 
  \begin{align}
    Q_1 \log\dfrac{ e }{ 1-2t  } \leq \dfrac{ -4g(t) }{ 1-2t } \leq Q_2 \log\dfrac{ e }{ 1-2t  },   
  \end{align}
  and
  \begin{align}\label{equation:function_g_h_endpoint_behavior_2}
    Q_1(1-2t) \leq 1-2h(t) \leq Q_2(1-2t).
  \end{align}
\end{lemma}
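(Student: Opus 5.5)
Substitute $u := 1-2t \in (0,1/2]$, so $t = (1-u)/2$, $\tfrac12+t = 1-\tfrac u2$ and $\tfrac12 - t = \tfrac u2$. Then
\begin{align*}
  g(t) = \big(1-\tfrac u2\big)\log\big(1-\tfrac u2\big) + \tfrac u2 \log u ,
  \qquad
  \frac{-4g(t)}{1-2t} = -\frac{4}{u}\big(1-\tfrac u2\big)\log\big(1-\tfrac u2\big) + 2\log\frac1u .
\end{align*}
The first bound in the lemma then amounts to comparing this expression with $\log(e/u) = 1+\log(1/u)$ on $u\in(0,1/2]$.

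For $u\in(0,1/2]$ we have $\tfrac u2 \le -\log(1-\tfrac u2) \le \tfrac u2 + \tfrac{u^2}{4}\cdot\tfrac{1}{1-u/2}\le u$. The factor $1-\tfrac u2$ lies in $[3/4,1]$. Hence the first term $-\tfrac4u(1-\tfrac u2)\log(1-\tfrac u2)$ lies in $[3/2,4]$. Adding $2\log(1/u)\ge 0$, the sum is at least $\tfrac32 + 2\log\tfrac1u \ge \tfrac32\log\tfrac eu$ and at most $4+2\log\tfrac1u \le 4\log\tfrac eu$. This gives the first display with $Q_1 = 3/2$ and $Q_2 = 4$ as candidate constants.

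For the second display, write $1-2h(t) = 1-2t + 2t\exp\big[\tfrac{4g(t)}{1-2t}\big] = u + (1-u)\,e^{-G}$, where $G := -4g(t)/(1-2t)$. The lower bound $1-2h(t)\ge u$ is immediate, so one may take $Q_1\le 1$. For the upper bound, use the lower estimate on $G$ obtained above: $e^{-G} \le e^{-3/2}\,u^{2}\le u$. Therefore $1-2h(t)\le u + u = 2u$.

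Finally, take $Q_1 := 1$ and $Q_2 := 4$, which satisfy both displays simultaneously.

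The only place needing care is the elementary sandwich for $\log(1-u/2)$, and in particular the sign bookkeeping, since $g(t)<0$. The key point is that the $\tfrac u2\log u$ term produces exactly $2\log(1/u)$, and this also yields the $u^2$ decay of $e^{-G}$ used in the second display. I expect no real obstacle beyond these checks.
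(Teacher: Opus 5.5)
Your proof is correct and follows the paper's argument: the same substitution $u=1-2t$, the same split of $-4g(t)/(1-2t)$ into $2\log(1/u)$ plus a term bounded on $(0,1/2]$, and the same identity $1-2h(t)=u+(1-u)e^{-G}$ with $e^{-G}\lesssim u^2$. The one difference is that you make the constants explicit ($Q_1=1$, $Q_2=4$, valid for both displays at once), which the paper leaves implicit.
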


\begin{proof}
  Set $ s = 1-2t \in (0,1/2] $. Then we have
  \begin{align*}
    -\dfrac{ 4g(t) }{ 1-2t } =& -\dfrac{ 4\big( (1-s/2)\log(1-s/2) + (s/2)\log(s) \big) }{ s } \\
    =& 2\log(1/s) - 4\underbrace{\frac{ 1-s/2 }{ s }\log(1-s/2)}_{O(1)\text{ on } s\in (0,1/2]} \\
    \asymp & \log(e/s),
  \end{align*}
  proving the first inequality. Also, it gives $ \exp(4g(t)/(1-2t)) \asymp s^2 $, which implies that
  \begin{align*}
    1-2h(t) = s + (1-s)\exp(4g(t)/(1-2t)) \asymp s,
  \end{align*}
  proving the second inequality.  
\end{proof}

\begin{lemma}[Robust Pairwise Testing Near Breakdown Point]\label{lemma:robust_pairwise_testing_near_breakdown_point}
  Fix a sufficiently large constant $ C>2 $. There exist absolute positive constants $ D_1,Q_0 >0 $, independent of $ \varepsilon $, with the following property: For every $ \varepsilon = \frac{k}{N} \in [1/4,1/2) $, consider testing $ H_0 : \mu \in B(\nu _1,r) $ versus $ H_1 : \mu \in B(\nu _2,r) $ for $ \left\Vert \nu _1-\nu _2 \right\Vert \geq Cr $ with $ \nu _1,\nu _2\in K $. If $ r \geq D_1\varepsilon \sigma \sqrt{z_\varepsilon } $, then the robust testing procedure $ \psi $ in \cite{prasadan_information_2026}[Theorem 3.6] satisfies
  \begin{align*}
    \mathop{ \sup }\limits_{\left\Vert \mu -\nu _1 \right\Vert \leq r} \mathbbm{P}_{ \mu ,\mathcal{C} }\left( \psi = 1 \right) \vee  \mathop{ \sup }\limits_{\left\Vert \mu -\nu _2 \right\Vert \leq r } \mathbbm{P}_{ \mu ,\mathcal{C} }\left( \psi = 0 \right)  \leq \exp\left[ -Q_0a_\varepsilon N\frac{r^2}{\sigma ^2} \right],
  \end{align*}
  for any admissible adversary $ \mathcal{C} $ with corruption budget $ k $.
\end{lemma}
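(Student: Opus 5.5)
The plan is a direct binomial-tail argument on the majority-vote statistic. I will not try to track the constants through the functions $ g,h $ of \cite{prasadan_information_2026}; the endpoint lemma (\autoref{equation:function_g_h_endpoint_behavior_1}) could serve as a cross-check but is not needed. By symmetry it suffices to bound $ \mathbbm{P}_{\mu,\mathcal{C}}(\psi=1) $ uniformly over $ \left\Vert \mu-\nu_1 \right\Vert\leq r $; the type II error is identical with the roles of $ \nu_1,\nu_2 $ swapped.

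\textbf{Step 1: a single clean sample.} Call an index $ i $ \emph{bad} if $ \left\Vert \tilde{X}_i-\nu_1 \right\Vert\geq \left\Vert \tilde{X}_i-\nu_2 \right\Vert $. Write $ u:=(\nu_2-\nu_1)/\left\Vert \nu_2-\nu_1 \right\Vert $ and $ m:=(\nu_1+\nu_2)/2 $. Then index $ i $ is bad iff $ \langle \tilde{X}_i-m,u\rangle\geq 0 $. Since $ \langle \mu-m,u\rangle\leq -\left\Vert \nu_1-\nu_2 \right\Vert/2+r\leq -(C/2-1)r $, and $ \langle \tilde{X}_i-\mu,u\rangle\sim\mathcal{N}(0,\sigma^2) $, the Gaussian tail gives
\begin{align*}
q:=\mathbbm{P}(i\text{ bad})\leq \exp\Big[-\frac{(C/2-1)^2r^2}{2\sigma^2}\Big].
\end{align*}
The bad indicators are i.i.d.\ across $ i $.

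\textbf{Step 2: reduce to a binomial tail.} The adversary changes at most $ k $ samples. So after corruption, the number of indices with $ \left\Vert X_i-\nu_1 \right\Vert\geq\left\Vert X_i-\nu_2 \right\Vert $ is at most $ B+k $, where $ B\sim\mathrm{Binomial}(N,q) $ counts the bad clean samples. This holds whatever the adversary's strategy or auxiliary randomness. Hence
\begin{align*}
\mathbbm{P}(\psi=1)\leq\mathbbm{P}(B\geq N/2-k)=\mathbbm{P}(B\geq Na_\varepsilon/2).
\end{align*}
This is exactly where the effective fraction $ a_\varepsilon=1-2\varepsilon $ enters.

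\textbf{Step 3: Chernoff bound.} Set $ t:=a_\varepsilon/2 $. If $ q\geq t $ there is nothing to prove, and this case is excluded below. Otherwise the Chernoff--KL bound $ \mathbbm{P}(B\geq Nt)\leq\exp[-N\,\mathrm{KL}(t\Vert q)] $, together with $ \mathrm{KL}(t\Vert q)\geq t\log\frac{t}{eq} $, gives
\begin{align*}
\mathbbm{P}(\psi=1)\leq\exp\Big[-\frac{Na_\varepsilon}{2}\Big(\frac{(C/2-1)^2r^2}{2\sigma^2}-\log\frac{2e}{a_\varepsilon}\Big)\Big].
\end{align*}

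\textbf{Step 4: absorb the logarithm using the radius condition.} Since $ \varepsilon\geq1/4 $, the assumption $ r\geq D_1\varepsilon\sigma\sqrt{z_\varepsilon} $ gives $ r^2/\sigma^2\geq D_1^2z_\varepsilon/16 $. Also $ \log(2e/a_\varepsilon)\leq z_\varepsilon+\log 2\leq 2z_\varepsilon $. Choosing $ D_1 $ large relative to the fixed $ C $ then ensures $ \log(2e/a_\varepsilon)\leq \frac{(C/2-1)^2r^2}{4\sigma^2} $. The same choice forces $ q\leq t $, which justifies Step 3. We conclude
\begin{align*}
\mathbbm{P}(\psi=1)\leq\exp\Big[-Q_0a_\varepsilon N\frac{r^2}{\sigma^2}\Big],\qquad Q_0=\frac{(C/2-1)^2}{8}.
\end{align*}

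The main obstacle is Step 4. The tail exponent must be linear in $ a_\varepsilon $ with a constant that is uniform in $ \varepsilon $. The $ \log(1/a_\varepsilon) $ penalty from the Chernoff bound can be absorbed only because the hypothesis forces $ r^2/\sigma^2\gtrsim z_\varepsilon $. This is precisely why the contamination term $ \sigma^2\varepsilon^2\log\frac{e}{1-2\varepsilon} $ appears as the threshold radius in the statement.
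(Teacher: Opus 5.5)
Your proof is correct, and it takes a more elementary route than the paper's.

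\textbf{The paper's route.} The paper does not redo the probability calculation. It recalibrates the constants of \cite{prasadan_information_2026}[Appendix B]. It sets $\alpha_\varepsilon=\frac12-Q_3a_\varepsilon$ and defines $L_\varepsilon,\beta_\varepsilon,C_{1,\varepsilon},C_{2,\varepsilon},C_{3,\varepsilon}$ through the functions $g,h$, using the endpoint lemma $-4g(t)/(1-2t)\asymp\log\frac{e}{1-2t}$ and $1-2h(t)\asymp 1-2t$. It then checks the two conditions $\varepsilon<\alpha_\varepsilon(1-\varrho)$ and $(\frac12-\alpha_\varepsilon)\log(1/\varrho)\ge-2g(\alpha_\varepsilon)$, which place the argument in the large-$r/\sigma$ branch of their proof. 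Finally it imports their Lemma B.3 and (B.7) to obtain the exponent $\frac12(\frac12-\alpha_\varepsilon)(C')^2Nr^2/\sigma^2$.

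\textbf{Your route.} You bypass all of that machinery. You use the deterministic domination that the corrupted vote count is at most $B+k$, with $B\sim\mathrm{Binomial}(N,q)$. You then apply the Chernoff--KL bound together with $\mathrm{KL}(t\Vert q)\ge t\log\frac{t}{eq}$.

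\textbf{What your route buys.} The argument is self-contained and needs no endpoint lemma. It gives explicit constants, $Q_0=(C/2-1)^2/8$ and $D_1\ge\sqrt{128}/(C/2-1)$, and works for any fixed $C>2$. It also makes transparent where $a_\varepsilon$ enters, namely the margin $N/2-k=Na_\varepsilon/2$. And it shows why the threshold is $\sigma\sqrt{z_\varepsilon}$: that size is what absorbs the $\log\frac{2e}{a_\varepsilon}$ penalty in the Chernoff bound.

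The crude bound works only because $\varepsilon\ge1/4$ already forces $r\gtrsim\sigma$, so a clean sample votes wrongly with tiny probability $q$. For small $\varepsilon$ it would not give the sharp threshold $r\gtrsim\varepsilon\sigma$; that is the small-$r/\sigma$ branch of the original proof. The lemma as stated does not require this.

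\textbf{What the paper's route buys.} It stays within the framework. The calibrated quantities $C_{1,\varepsilon}^2\asymp z_\varepsilon$ and $C_{3,\varepsilon}\asymp a_\varepsilon$ are reused later, in the requirements on $\gamma_\varepsilon$ in the localization step for unbounded $K$. With your proof, those requirements would need their own justification.

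\textbf{A small wording fix.} ``If $q\ge t$ there is nothing to prove'' is misleading; in that case the Chernoff bound is simply vacuous. You do correctly show the case cannot arise: your Step 4 condition gives $q\le(a_\varepsilon/(2e))^2<a_\varepsilon/2$.
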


\begin{proof}
  Let $ Q_1,Q_2 $ be the constants from the previous lemma. 
  Set
  \begin{align*}
    C':=\dfrac{ C-2 }{ 2\sqrt{2\pi} },\quad \varrho := \exp\left[ -(C')^2r^2/\sigma ^2 \right],\quad \alpha _\varepsilon := \frac{1}{2} - Q_3a_\varepsilon ,  
  \end{align*}
  where $ Q_3 \in (0,1/(4Q_2)) $ is some constant chosen small enough such that $ \alpha _\varepsilon \in [1/4,1/2) $ for every $ \varepsilon \in [1/4,1/2) $. Then directly we have
  \begin{align}\label{equation:robust_testing_constant_condition_3}
    h(\alpha _\varepsilon ) - \varepsilon  \mathop{ = }\limits^{(i)}  \frac{a_\varepsilon }{2} - \big(\frac{1}{2} - h(\alpha_\varepsilon )\big) 
    \mathop{ \geq }\limits^{(ii)}  a_\varepsilon \big( \frac{1}{2} - Q_2Q_3 \big)
    > 0,
  \end{align}
  where $ (i) $ from the definition of $ a_\varepsilon  $; $ (ii) $ using \autoref{equation:function_g_h_endpoint_behavior_2}. This verifies a counterpart of \cite{prasadan_information_2026}[Appendix B, Equation (B.1)]. Then we set 
  \begin{align*}
    L_\varepsilon := \frac{3}{2}\sqrt{-\dfrac{ 4g(\alpha _\varepsilon ) }{ 1-2\alpha _\varepsilon  } } ,\quad \beta _\varepsilon := C'\big( \Phi(L_\varepsilon )-\frac{1}{2} \big),
  \end{align*}
  then directly we have 
  \begin{align}\label{equation:robust_testing_constant_condition_0}
    \Phi^{-1}\big( \frac{1}{2} + \frac{\beta _\varepsilon}{C'} \big) = L_\varepsilon > \sqrt{-\dfrac{ 4g(\alpha _\varepsilon ) }{ 1-2\alpha _\varepsilon  } },
  \end{align}
  which is a counterpart of their Equation (B.2). We further set 
  \begin{align*}
    C_{1,\varepsilon } := \frac{2L_\varepsilon }{\beta _\varepsilon } ,\quad C_{2,\varepsilon } := \frac{\beta _\varepsilon }{L_\varepsilon },
  \end{align*}
  which follow the settings of their (B.3) and (B.4), and satisfy $ C_{1,\varepsilon }C_{2,\varepsilon }=2 $. We note that we have $ \beta _\varepsilon \asymp 1 $ since \autoref{equation:function_g_h_endpoint_behavior_1} gives $ -4g(\alpha _\varepsilon )/(1-2\alpha _\varepsilon ) $ is bounded from below. As a result, we have $ C_{1,\varepsilon }^2\asymp z_\varepsilon  $, and then
  \begin{align*}
    C_{3,\varepsilon } := \frac{C_{2,\varepsilon }^2}{2} \wedge \frac{(1/2-\alpha _\varepsilon )C'^2}{2} \asymp a_\varepsilon , 
  \end{align*}
  where the final comparison follows since $ a_\varepsilon z_\varepsilon \lesssim 1 $. Now we finish the calibration of constants as counterparts of (B.1)-(B.4) in \cite{prasadan_information_2026}[Appendix B]. 
  
  Then we verify that they satisfy the two constant conditions (i)-(ii) from their proof. Choose $ D_1 $ sufficiently large such that
  \begin{align*}
    D_1 > \max\big\{ \mathop{ \sup }\limits_{\varepsilon } \dfrac{ C_{1,\varepsilon } }{ \sqrt{z_\varepsilon } } , \mathop{ \sup }\limits_{\varepsilon } \dfrac{ 4 }{ C' }\dfrac{ L_\varepsilon  }{ \sqrt{z_\varepsilon } } \big\} < \infty.
  \end{align*}
  Here both suprema are finite by the previous discussion. As a result, if $ r/\sigma  \geq D_1\varepsilon \sqrt{z_\varepsilon } $, then using the first term we have
  \begin{align}\label{equation:robust_testing_constant_condition_1}
    \frac{r}{\sigma } \geq  C_{1,\varepsilon }\varepsilon .
  \end{align}
  Also, since $ \varepsilon \geq 1/4 $, we have for the second term
  \begin{align}\label{equation:robust_testing_constant_condition_2}
    \frac{C'r}{\sigma } \geq C'D_1\varepsilon \sqrt{z_\varepsilon } \geq \dfrac{ C'D_1 }{ 4 }\sqrt{z_\varepsilon} > L_\varepsilon .
  \end{align}
  This verifies the two constant conditions (i)-(ii) in \cite{prasadan_information_2026}[Appendix B]:
  \begin{itemize}[topsep=2pt,itemsep=0pt]
    \item[(i)] Verify that $ \varepsilon < \alpha _\varepsilon (1-\varrho) $:
    \begin{align*}
      \alpha _\varepsilon (1-\varrho) =& \alpha _\varepsilon \big( 1 - \exp\left[ -(C')^2r^2/\sigma ^2 \right] \big) \\
      \mathop{ > }\limits^{(i)} & \alpha _\varepsilon \big( 1 - \exp\left[ -L_\varepsilon ^2 \right] \big) \\
      \mathop{ \geq }\limits^{(ii)} & \alpha _\varepsilon \big( 1 - \exp\left[ 4g(\alpha _\varepsilon )/(1-2\alpha _\varepsilon ) \right] \big) \\
      =& h(\alpha _\varepsilon ) \\
      \mathop{ > }\limits^{(iii)} & \varepsilon ,
    \end{align*}
    where $ (i) $ from \autoref{equation:robust_testing_constant_condition_2}; $ (ii) $ from \autoref{equation:robust_testing_constant_condition_0}; $ (iii) $ from \autoref{equation:robust_testing_constant_condition_3}.
    \item[(ii)] Verify that $ (1/2-\alpha _\varepsilon )\log (1/\varrho) \geq -2g(\alpha _\varepsilon )$:
    \begin{align*}
      (1/2-\alpha _\varepsilon )\log (1/\varrho) >& (1/2-\alpha _\varepsilon )L_\varepsilon ^2 \mathop{ > }\limits^{(i)} -2g(\alpha _\varepsilon ), 
    \end{align*}
    where $ (i) $ from \autoref{equation:robust_testing_constant_condition_0}.
  \end{itemize}
  As a result, condition (i) and (ii) place the argument in the large-$ \frac{r}{\sigma } $ branch of their proof; and the quantification of the constants gives the exponent and threshold of the tail bound. We remind the readers that this only applies to the near-breakdown regime $ \varepsilon \in [1/4,1/2) $. Their original small-$ \frac{r}{\sigma } $ branch allows $ \varepsilon $ to be close to $ 0 $ where their $ r\gtrsim C_1(\kappa )\varepsilon \sigma  $ does not automatically imply $ C'r/\sigma > L $.

  Applying their Lemma B.3 and inequality (B.7), then following their procedure to compare the clean and corrupted voting results gives
  \begin{align*}
    \mathop{ \sup }\limits_{\mu \in B(\nu _1,r)} \mathbbm{P}_{ \mu ,\mathcal{C} }\left( \psi = 1 \right)  \leq \exp\left[ -\frac{1}{2}(1/2-\alpha _\varepsilon ) (C')^2N\frac{r^2}{\sigma ^2} \right] .
  \end{align*}
  Noticing that the definition of $ \alpha _\varepsilon  $ gives $ a_\varepsilon = \frac{1-2\alpha _\varepsilon }{2Q_3} $, setting $ Q_0 \leq Q_3(C')^2/2 $ gives the desired result. The other branch $ \mathbbm{P}_{ \mu \in B(\nu _2,r) }\left( \psi = 0 \right) $ follows similarly.

\end{proof}

\section{Minimax Upper Bound for Bounded Case}\label{appendix:minimax_upper_bound_bounded_case}
We now apply the robust testing procedure \autoref{lemma:robust_pairwise_testing_near_breakdown_point} to the tournament-on-a-tree procedure introduced in \autoref{appendix:tournament_on_tree_framework}. In the following, our focus is on the near-breakdown regime $ \varepsilon \in [1/4,1/2) $; for $ \varepsilon \in [0,1/4) $, the minimax upper bound follows from \cite{prasadan_information_2026}[Theorem 3.10].

Applying the robust pairwise testing to the tournament-on-the-tree procedure, we obtain a path along the tree $ \Y = [\Y_1,\Y_2,\ldots ] $ where $ \Y_{J+1} \in \mathcal{O}(\Y_J) $ is the winner of the tournament at the $ J $-th iteration, as introduced in \autoref{appendix:tournament_on_tree_framework}. Now we prove that by choosing a proper stopping step $ J $ as the output, the procedure achieves the minimax rate. For the bounded case, we use the following two stopping conditions:
  \begin{itemize}[topsep=2pt,itemsep=0pt]
    \item[\textbf{A}] $ Q_0a_\varepsilon Nr_J^2/\sigma ^2 \geq 4\log \mloc(\frac{c}{2}r_J,c) \vee \log 2 $;
    \item[\textbf{B}]$ r_J \geq D_1\varepsilon \sigma \sqrt{z_\varepsilon } $.
  \end{itemize}
  Here $ Q_0,D_1 $ are the constants in \autoref{lemma:robust_pairwise_testing_near_breakdown_point} and $ r_J = d/(2^{J-1}(C+1)) $ is the geometric scale as in \autoref{equation:geometric_scale_r_J}. Let $ J^A $ and $ J^B $ be the largest positive integers such that the conditions \textbf{A} and \textbf{B} are satisfied, respectively; if the corresponding condition is never satisfied, we set the index to be $ 1 $. This separation allows us to analyze the two possible first-failure scenarios, representing different statistical meanings: (\textbf{A}) the critical local radius controlled by the geometry of $ K $; (\textbf{B}) the contamination error. Then we let the final stopping step be $ J^* := J^A \wedge J^B $ and the final estimator be $ \hat{\mu }_{\mathrm{bounded}} := \Y_{J^*} $.

  The following lemma shows the error bound induced by the stopping conditions using \autoref{lemma:robust_pairwise_testing_near_breakdown_point}.

\begin{lemma}\label{lemma:robust_tournament_on_tree_near_breakdown_point_bounded_case}
  Let $ K $ be a bounded star-shaped set with diameter $ d $ and $ \mu \in K $, $ \varepsilon \in [1/4,1/2) $. Let $ r_J $ be the geometric scale as in \autoref{equation:geometric_scale_r_J}. If both conditions \textbf{A} and \textbf{B} are satisfied at some iteration $ J $, then 
  \begin{align*}
    \mathbbm{E}_{  }\left[ \left\Vert \Y_J - \mu  \right\Vert ^2 \right] \lesssim r_J^2 + \frac{\sigma ^2}{a_\varepsilon N}\exp\left[ -Q_0 a_\varepsilon N \frac{r_J^2}{2\sigma ^2} \right] \lesssim r_J^2.
  \end{align*}
\end{lemma}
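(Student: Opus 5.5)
The plan is to follow the chaining argument of \cite{prasadan_information_2026}[Lemma 3.11]. We control the probability that the winner chain first leaves a neighbourhood of $\mu$ at some level $j\le J$, and bound the deviation using the geometric decay of $r_j$ together with \autoref{lemma:pruned_directed_tree_properties}(iv).

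First, I would check monotonicity. Since $r_j$ is decreasing in $j$, condition \textbf{B} at level $J$ implies \textbf{B} at every $j\le J$. For condition \textbf{A}, $\mloc(\frac c2 r_j,c)$ is non-increasing in its radius argument by \autoref{lemma:useful_properties}(i). Each step back from $J$ therefore multiplies the left side of \textbf{A} by $4$ while not increasing the right side, so \textbf{A} holds for all $j\le J$ as well.

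Next, define the good event $G_j:=\{\|\Y_j-\mu\|\le (C+1)r_j\}$; note that $G_1$ holds trivially because $(C+1)r_1=d$. On $G_{j}$ we have $\mu\in K\cap B(\Y_j,(c/2)r_{j+1})$, since $(c/2)r_{j+1}=2(C+1)r_{j+1}=(C+1)r_j$. By \autoref{lemma:pruned_directed_tree_properties}(ii), $\mathcal O(\Y_j)$ is then an $r_{j+1}$-cover of that set, with cardinality at most $\mloc(\frac c2 r_{j+1},c)$. Condition \textbf{B} lets us apply \autoref{lemma:robust_pairwise_testing_near_breakdown_point} with radius $r_{j+1}$, and \autoref{lemma:tournament_error_bound} then gives
\begin{align*}
\mathbbm{P}(G_j\cap G_{j+1}^c)\le \mloc(\tfrac c2 r_{j+1},c)\exp\big[-Q_0a_\varepsilon N r_{j+1}^2/\sigma^2\big]\le \exp\big[-\tfrac34 Q_0a_\varepsilon N r_{j+1}^2/\sigma^2\big],
\end{align*}
where the last step uses \textbf{A} at level $j+1$. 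Since the adversary sees all the data, I would not use independence across levels; a union bound over $j$ suffices. One subtlety is that the winner chain is data dependent. The tournament lemma is a statement about a fixed candidate set, so I would condition on $\Y_j$ (or take a union over the at most $\mloc$ possible nodes via \autoref{lemma:pruned_directed_tree_properties}(iii)). This is exactly how the original proof proceeds, and the extra factor is absorbed by \textbf{A}, which carries the factor $4$.

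Finally, for the risk bound, take $j_0$ to be the first index with $G_{j_0}^c$. On the event that no failure occurs up to $J$, $\|\Y_J-\mu\|\le (C+1)r_J$. Otherwise, by \autoref{lemma:pruned_directed_tree_properties}(iv), $\|\Y_J-\mu\|\le \|\Y_J-\Y_{j_0-1}\|+\|\Y_{j_0-1}-\mu\|\lesssim r_{j_0-1}$. Hence
\begin{align*}
\mathbbm{E}\|\Y_J-\mu\|^2\lesssim r_J^2+\sum_{j\le J} r_{j-1}^2\exp\big[-\tfrac34 Q_0 a_\varepsilon N r_j^2/\sigma^2\big].
\end{align*}
Writing $r_j^2=4^{J-j}r_J^2$, the sum is dominated by a multiple of $\sup_{x\ge r_J^2}x\,e^{-Q_0a_\varepsilon N x/(2\sigma^2)}$ times a geometric factor. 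This is bounded by $\frac{\sigma^2}{a_\varepsilon N}\exp[-Q_0a_\varepsilon N r_J^2/(2\sigma^2)]$, using $xe^{-\lambda x}\lesssim \lambda^{-1}e^{-\lambda x/2}$ and the fact that the terms decay super-geometrically. The last inequality $\lesssim r_J^2$ follows because $\lambda r_J^2\ge 4\log2$ by \textbf{A}, and $\lambda^{-1}e^{-\lambda r_J^2/2}\le r_J^2\cdot\sup_{y\ge 4\log 2}y^{-1}e^{-y/2}$. I expect the main obstacle to be making the conditioning on the data-dependent parent node rigorous under a full-knowledge adversary while keeping every constant free of $\varepsilon$; the $\varepsilon$-dependence is carried entirely by $a_\varepsilon$ in the exponent.
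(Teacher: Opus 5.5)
Your proposal is correct and follows essentially the paper's route. You propagate \textbf{A} and \textbf{B} to all levels $j\le J$, and bound each level's failure by a union over the at most $\mloc(\frac c2 r_{j+1},c)$ nodes of $\mathcal L(j)$ near $\mu$ (property (iii)), times the at most $\mloc(\frac c2 r_{j+1},c)$ offspring in each tournament. The resulting $\mloc^2$ factor is absorbed by \textbf{A} to give exponent $Q_0a_\varepsilon N r_{j+1}^2/(2\sigma^2)$, and you then sum over the first failure level using property (iv) and the $\log 2$ part of \textbf{A}. The one correction is to discard the ``condition on $\Y_j$'' option, since conditioning on a data-dependent node changes the law of the data seen by the pairwise test; your displayed single-$\mloc$ bound with exponent $\tfrac34 Q_0$ should become the $\mloc^2$ version with exponent $\tfrac12 Q_0$, which still gives the stated bound once the first term of the sum is absorbed into $r_J^2$.
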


\begin{proof}
  By the monotonicity of the local metric entropy as in \autoref{lemma:useful_properties}[(i)], for any $ 1\leq j\leq J $ we have
  \begin{align*}
    r_j \geq D_1\varepsilon \sigma \sqrt{z_\varepsilon },\quad Q_0\frac{a_\varepsilon Nr_j^2}{\sigma ^2} \geq 4\log \mloc(\frac{c}{2}r_j,c) \vee \log 2.
  \end{align*}
  As a result, \autoref{lemma:robust_pairwise_testing_near_breakdown_point} applies to every comparison before the stopping iteration $ j \leq J $.
  
  Now following the tree-tail calculation of \cite{prasadan_information_2026}[Lemma B.5, Lemma B.6] but with a different tail probability from \autoref{lemma:robust_pairwise_testing_near_breakdown_point}, we account for the two local packing factors in the union bound. We remark that the only differences are the error tail exponent $ Q_0a_\varepsilon Nr_j^2/\sigma ^2 $ and the threshold $ D_1\varepsilon \sigma \sqrt{z_\varepsilon } $. The union bound on the tree gives the tail probability
  \begin{align*}
    \mloc (\frac{c}{2}r_j,c)^2\exp\left[ -Q_0a_\varepsilon Nr_j^2/\sigma ^2 \right] \leq \exp\left[ -Q_0a_\varepsilon Nr_j^2/(2\sigma ^2) \right].
  \end{align*}
  The subsequent tail integration, as in their Lemma B.6, gives the error bound
  \begin{align*}
    \mathbbm{E}_{  }\left[ \left\Vert \Y_J - \mu  \right\Vert ^2 \right] \lesssim r_J^2 + \frac{\sigma ^2}{a_\varepsilon N}\exp\left[ -Q_0 a_\varepsilon N \frac{r_J^2}{2\sigma ^2} \right] . 
  \end{align*}
  Finally, the condition $ \frac{Q_0a_\varepsilon Nr_J^2}{\sigma ^2} \geq \log 2 $ of (\textbf{A}) implies that the second term is dominated by the first term, giving the final result.
  
\end{proof}

\begin{lemma}
  For $ \varepsilon \in [1/4,1/2) $, the final estimator $ \hat{\mu }_{\mathrm{bounded}} $ satisfies
  \begin{align*}
    \mathop{ \sup }\limits_{\mu \in K} \mathop{ \sup }\limits_{\mathcal{C}} \mathbbm{E}_{ \mu ,\mathcal{C} } \left[ \left\Vert \hat{\mu }_{\mathrm{bounded}} - \mu  \right\Vert ^2 \right] \lesssim ( r_{J^A}^2 \vee \sigma ^2\varepsilon ^2\log(1/(1-2\varepsilon )) ) \wedge d^2.
  \end{align*}
\end{lemma}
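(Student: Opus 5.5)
The plan is to split according to which stopping condition fails first, using \autoref{lemma:robust_tournament_on_tree_near_breakdown_point_bounded_case} at $J^* = J^A\wedge J^B$. The trivial bound comes first: every node of the tree lies in $K$, so $\|\hat{\mu}_{\mathrm{bounded}}-\mu\|\le d$ deterministically, which supplies the ``$\wedge d^2$'' part. It then suffices to show $\mathbbm{E}\|\Y_{J^*}-\mu\|^2 \lesssim r_{J^A}^2 \vee \sigma^2\varepsilon^2 z_\varepsilon$ (note $z_\varepsilon\asymp \log(1/(1-2\varepsilon))$ for $\varepsilon\ge 1/4$, up to constants near $\varepsilon=1/4$ where both are $\asymp 1$; I will take care to work with $z_\varepsilon$ and convert at the end, since for $\varepsilon\in[1/4,3/8]$ one uses $\varepsilon^2 z_\varepsilon\asymp 1$ anyway).

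\emph{Case analysis.} First, if $J^*\ge 2$ or more precisely if both conditions hold at $J^*$: since $r_J$ is decreasing in $J$ and the local entropy $\log\mloc(\frac c2 r_J,c)$ is non-increasing in $r$ (\autoref{lemma:useful_properties}(i)), condition \textbf{A} holding at $J^A$ means it holds at all $J\le J^A$ (the left side decreases in $J$, the right side increases in $J$), and \textbf{B} likewise holds at all $J\le J^B$. Hence both hold at $J^*$, and \autoref{lemma:robust_tournament_on_tree_near_breakdown_point_bounded_case} gives $\mathbbm{E}\|\Y_{J^*}-\mu\|^2\lesssim r_{J^*}^2 = r_{J^A}^2\vee r_{J^B}^2$. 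If $J^*=J^A$ we are done. If $J^*=J^B<J^A$, then by maximality \textbf{B} fails at $J^B+1$, so $r_{J^B}=2r_{J^B+1}<2D_1\varepsilon\sigma\sqrt{z_\varepsilon}$, giving $r_{J^B}^2\lesssim\sigma^2\varepsilon^2 z_\varepsilon$.

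\emph{Degenerate case.} The remaining issue is when some condition is never satisfied and the index defaults to $1$, so $\Y_1$ is the root and the error is at most $d^2 = (C+1)^2 r_1^2$. If \textbf{A} fails at $J=1$, then $J^A=1$ by convention and the bound $d^2\asymp r_{J^A}^2$ is fine. If \textbf{B} fails at $J=1$, then $d\asymp r_1<D_1\varepsilon\sigma\sqrt{z_\varepsilon}$, so $d^2\lesssim\sigma^2\varepsilon^2z_\varepsilon$. Note the lemma's hypotheses at $J=1$ are not needed since the bound there is deterministic. Also the tournament is only invoked at levels $j\le J^*$ where both conditions hold, so the comparison lemma applies.

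The main obstacle is ensuring the monotonicity argument and the ``never satisfied'' convention are consistent (e.g. \textbf{A} could hold for arbitrarily large $J$ only if $\log\mloc$ stays bounded while $r_J\to 0$, impossible since the left side tends to $0$ while the right side is $\ge\log 2\cdot$; so $J^A,J^B$ are finite), and that the factor $2$ loss between $r_{J^B}$ and $r_{J^B+1}$ is absorbed into constants; everything else is a direct application of the previous lemma.
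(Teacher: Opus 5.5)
Your proposal is correct and follows essentially the same route as the paper. You use the deterministic $d^2$ bound when one of the stopping conditions already fails at the root. Otherwise you apply the preceding lemma at $J^*=J^A\wedge J^B$, together with $r_{J^B}=2r_{J^B+1}<2D_1\varepsilon\sigma\sqrt{z_\varepsilon}$. Your explicit checks that both conditions hold at $J^*$ (by monotonicity) and that $J^A,J^B$ are finite make precise what the paper leaves implicit.
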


\begin{proof}
  We first deal with the case when at least one of the two conditions fails at $ J=1 $, where the estimator is set to be $ \hat{\mu }_{\text{bounded}} = \Y_1 $. In this case, since $ \mu ,\Y_1\in K $, naturally we have $ \left\Vert \Y_1 - \mu  \right\Vert ^2 \leq d^2 $. Also, we remind the readers that $ r_1 = d/(C+1) \asymp d $. Now we discuss the consequences of each failure:
  \begin{itemize}[topsep=2pt,itemsep=0pt]
    \item \textbf{If (A) fails at $ J=1 $}, then $ J^A = J^* = 1 $ and we have $ d^2 \asymp r_{J^A}^2 $. Thus
    \begin{align*}
       \mathbbm{E}_{  }\left[ \left\Vert \hat{\mu }_{\mathrm{bounded}} - \mu  \right\Vert ^2 \right] \leq d^2 \asymp r_{J^A}^2 \wedge d^2 \leq ( r_{J^A}^2 \vee \sigma ^2\varepsilon ^2\log(1/(1-2\varepsilon )) ) \wedge d^2.
    \end{align*}
    \item \textbf{If (B) fails at $ J=1 $}, then we have $ d^2\asymp r_1^2 \lesssim \varepsilon ^2\sigma ^2\log(1/(1-2\varepsilon )) $. Thus
    \begin{align*}
      \mathbbm{E}_{  }\left[ \left\Vert \hat{\mu }_{\mathrm{bounded}} - \mu  \right\Vert ^2 \right] \leq d^2 \lesssim \varepsilon ^2\sigma ^2\log(1/(1-2\varepsilon )) \wedge d^2 \leq ( r_{J^A}^2 \vee \sigma ^2\varepsilon ^2\log(1/(1-2\varepsilon )) ) \wedge d^2.
    \end{align*}
  \end{itemize}

  It remains to consider the case when both conditions are satisfied at the root $ J=1 $. By the preceding lemma, we have
  \begin{align*}
     \mathop{ \sup }\limits_{\mu \in K} \mathop{ \sup }\limits_{\mathcal{C}} \mathbbm{E}_{ \mu ,\mathcal{C} } \left[ \left\Vert \hat{\mu }_{\mathrm{bounded}} - \mu  \right\Vert ^2 \right] \lesssim r_{J^*}^2.
  \end{align*}
  At the level when condition (\textbf{B}) fails, we have $ r_{J^B} < 2D_1\varepsilon \sigma \sqrt{z_\varepsilon } $. Then since $ J\mapsto r_J $ is decreasing, 
  \begin{align*}
    r_{J^*}^2 = r_{J^A}^2 \vee r_{J^B}^2 \lesssim  r_{J^A}^2 \vee \sigma ^2\varepsilon ^2\log(1/(1-2\varepsilon )).
  \end{align*}
  Combining the two cases with the trivial bound $ d^2 $ we obtain the desired result.
  
\end{proof}

\begin{proof}[Proof of \autoref{theorem:minimax_upper_bound_bounded}]
  Recall that
  \begin{align*}
    \delta _\varepsilon^* = \sup\big\{ r\geq 0: a_\varepsilon N\frac{r^2}{\sigma ^2} \leq \log \mloc(r,c) \}.
  \end{align*}
  If $ K $ is a singleton, then $ \delta _\varepsilon ^* = 0 $ and the result is trivial. Otherwise, we split the discussion by the value of $ \kappa _\varepsilon := a_\varepsilon N\delta _\varepsilon ^{*2} / \sigma ^2 $ and argue that the rate given in the previous lemma matches the desired minimax rate. 

  \begin{itemize}[topsep=2pt,itemsep=0pt]
    \item \textbf{Case 1: }If $ \kappa _\varepsilon \leq 8\log 2 $. By the monotonicity of the local metric entropy \autoref{lemma:useful_properties}[(i)], we have 
    \begin{align*}
      8\log 2 \geq \kappa _\varepsilon = \frac{1}{4}a_\varepsilon N\frac{(2\delta _\varepsilon ^*)^2}{\sigma ^2} \geq \frac{1}{4} \log \mloc(2\delta _\varepsilon ^*,c) .
    \end{align*}
    By the same argument as in \cite{prasadan_information_2026}[proof of Theorem 3.10, Case 3], which utilizes the fact that there must be line segments in $ K $ of length at least $ d/3 $ \autoref{lemma:useful_properties}[(iii)], we have $ d\leq 12\delta _\varepsilon ^* $. As a result, we directly have
    \begin{align*}
       \mathbbm{E}_{  }\left[ \left\Vert \hat{\mu }_{\mathrm{bounded}} - \mu  \right\Vert ^2 \right] \leq d^2 \asymp (\delta _\varepsilon ^{*2} \vee \sigma ^2\varepsilon ^2\log(1/(1-2\varepsilon ))) \wedge d^2.
    \end{align*}
    \item \textbf{Case 2:} If $ \kappa _\varepsilon > 8\log 2 $. To apply the previous lemma, it suffices to show that $ \delta _\varepsilon ^* \gtrsim r_{J^A} $. We do so by using an intermediate quantity $ \tilde{r} := D_4\delta_\varepsilon ^* $ and arguing that $ \tilde{r} \gtrsim r_{J^A} $. Here we set
    \begin{align*}
      D_3 := \frac{c}{4} \wedge \frac{\sqrt{Q_0}}{4}, \quad D_4:= 2/D_3. 
    \end{align*}
    Consequently, we have $ D_3 \leq c/2 $, $ 4D_3^2 < Q_0 $, $ D_3D_4 = 2 $, and $ Q_0D_4^2 \geq 64$. Now we verify that $ \tilde{r} $ satisfies the condition (\textbf{A}):
    We have
    \begin{align*}
      \frac{Q_0a_\varepsilon N (D_4\delta _\varepsilon ^*)^2}{\sigma ^2} = & \frac{Q_0}{D_3^2} \frac{a_\varepsilon N (2\delta _\varepsilon ^*)^2}{\sigma ^2} \\
      > & \frac{Q_0}{D_3^2} \log \mloc(2\delta _\varepsilon ^*,c) \\
      \mathop{ \geq }\limits^{(i)}  & \frac{Q_0}{D_3^2} \log \mloc( \frac{cD_4}{2}\delta _\varepsilon ^*,c) \\
      =& \frac{Q_0}{D_3^2} \log \mloc( \frac{c}{2}\tilde{r},c) \\
      \geq& 4\log \mloc( \frac{c}{2}\tilde{r},c),
    \end{align*}
    where $ (i) $ from the monotonicity of the local metric entropy \autoref{lemma:useful_properties}[(i)] and the fact that $ D_3\leq c/2 $. Also from the first line we have
    \begin{align*}
       \frac{Q_0a_\varepsilon N (D_4\delta _\varepsilon ^*)^2}{\sigma ^2} = & Q_0D_4^2\kappa _\varepsilon > 64\log 2 > \log 2.
    \end{align*}

    Now if $ r_1 < \tilde{r} $, then we have $ r_{J^A} \leq r_1 < \tilde{r} $. Otherwise, we can choose some $ J $ such that $ \tilde{r} \leq r_J < 2\tilde{r} $. By the monotonicity of $ r \mapsto Q_0a_\varepsilon Nr^2/\sigma ^2 - \big(4\log \mloc(\frac{c}{2}r,c) \vee \log 2\big)$, we must have $ r_{J^A} \leq r_J < 2\tilde{r} $. As a result, both cases give $ r_{J^A} \lesssim \tilde{r} $. Substituting this into the previous lemma gives
    \begin{align*}
      \sup _{\mu \in K} \sup_{\mathcal{C}} \mathbbm{E}_{  }\left[ \left\Vert \hat{\mu }_{\mathrm{bounded}} - \mu  \right\Vert ^2 \right]  \lesssim &(r_{J^A}^2 \vee \sigma ^2\varepsilon ^2\log(1/(1-2\varepsilon ))) \wedge d^2 \\
      \lesssim &(\delta _\varepsilon ^{*2} \vee \sigma ^2\varepsilon ^2\log(1/(1-2\varepsilon ))) \wedge d^2 .
    \end{align*}    
  \end{itemize}
  When $ \varepsilon \geq 1/4 $, replacing $ \log(1/(1-2\varepsilon )) $ by $ \log(e/(1-2\varepsilon )) $ does not change the order, giving the desired result in this regime. The case $ \varepsilon \in [0,1/4) $ follows from \autoref{proposition:agreement_with_existing_result}.
  
\end{proof}

\section{Minimax Upper Bound for Unbounded Case}\label{appendix:minimax_upper_bound_unbounded_case}

For the unbounded case, the tree is preceded by a localization step, as described in \autoref{appendix:tournament_on_tree_framework}. The localization and tree construction remain unchanged up to constant rescaling, and it suffices to analyze the constant calibration in these steps. The objective is to restore the dependence on $ \varepsilon  $ of the original proof of \cite{prasadan_information_2026} where $ \varepsilon  $ is assumed to be bounded away from $ 1/2 $. In the remaining part we only consider the near-breakdown regime $ \varepsilon \in [1/4,1/2) $; for $ \varepsilon \in [0,1/4) $, the minimax upper bound follows using similar arguments as in bounded case.

\subsection{Localization Step}

The two requirements on $ \gamma _\varepsilon  $ inherited from \cite{prasadan_information_2026}[Section 5.2] are
\begin{align*}
  \gamma _\varepsilon  > \max \big( 1-\dfrac{ C_{3,\varepsilon } }{ 6(C+1)^2\log2 } , 1-\dfrac{ 4 }{(C+1)^2C_{1,\varepsilon }^2 } \big),
\end{align*}
where we note that $ C_{1,\varepsilon }^2\asymp z_\varepsilon = \log(e/(1-2\varepsilon )) $ and $ C_{3,\varepsilon }\asymp a_\varepsilon = 1-2\varepsilon $. We meet these requirements by setting
\begin{align}\label{equation:localization_step_constant_calibration}
  \gamma _\varepsilon := 1 - Q_4a_\varepsilon 
\end{align}
where $ Q_4>0 $ can be chosen universally since $ a_\varepsilon z_\varepsilon \lesssim 1 $. As a result we have $ \gamma_\varepsilon  \asymp 1 $, $ 1-\gamma_\varepsilon \asymp a_\varepsilon  $. Without loss of generality, we choose $ Q_4 \leq 1 $ so that $ \gamma _\varepsilon \in [1/2,1) $ for all $ \varepsilon \in [1/4,1/2) $. 

Now we proceed to the localization radius $ R_\varepsilon  $. Inherited from \cite{prasadan_information_2026}[Equation (5.1)], we require that
\begin{align}\label{equation:condition_on_R_epsilon}
  R_\varepsilon ^2 > \max\big(   \frac{8n\sigma^2\log5}{1-\gamma_\varepsilon}, \frac{-64\sigma^2g(\varepsilon)}{\gamma_\varepsilon a_\varepsilon }, \frac{1024\sigma^2\log2}{\gamma_\varepsilon a_\varepsilon}, 4(C+1)^2 \big) .
\end{align}
By \autoref{equation:function_g_h_endpoint_behavior_1}, we have $ -g(\varepsilon)/a_\varepsilon \asymp z_\varepsilon  $. Since $ n\geq 1 $ and $ a_\varepsilon z_\varepsilon \lesssim 1 $, the above condition can be satisfied by setting
\begin{align}\label{equation:localization_step_radius_calibration}
    R_\varepsilon ^2 := Q_5\dfrac{ n\sigma ^2 }{ a_\varepsilon } \vee 8(C+1)^2, 
\end{align}
where $ Q_5>0 $ is a sufficiently large universal constant.

For every $t\geq R_\varepsilon$, the conditions \autoref{equation:condition_on_R_epsilon} for
$R_\varepsilon$ remain valid with $t$ in place of $R_\varepsilon$. Applying the same argument as in \cite{prasadan_information_2026}[Lemma 5.3] gives
\begin{align}\label{equation:localization_step_success_probability}
  \mathbbm{P}_{  }\left( \mu \not\in S(t ) \right) \leq \exp\left(-\frac{Na_\varepsilon\gamma_\varepsilon}{64}
  \frac{t^2}{\sigma^2}
  \right),\quad t\geq R_\varepsilon.
\end{align}
It follows from \autoref{equation:localization_step_radius_calibration} that the right-hand side is bounded by $ \exp[-Q_5Nn/128] $. As a result, the localization step succeeds with high probability.

In \autoref{appendix:tournament_on_tree_framework}, we have described the construction of the estimator for the unbounded case depending on whether $ S(R_\varepsilon ) $ is empty or not. 

\subsection{Risk Upper Bound Analysis}

Recall that the final estimator is defined as
\begin{itemize}[topsep=2pt,itemsep=0pt]
  \item If $ S(R_\varepsilon ) \neq \varnothing $: the algorithm outputs a winner chain $ \Y^s = [\Y_1^s,\Y_2^s,\ldots ] $ from the tournament-on-a-tree procedure on the localized set $ S(R_\varepsilon ) $; traversing the tree until some stopping step $ J^* $ gives the final estimator $ \Y_{J^*}^s $;
  \item If $ S(R_\varepsilon ) = \varnothing $: let $ \hat{R} := \inf\{R\geq0: S(R)\neq \varnothing \} $ and the estimator is $ \min_{\text{lex}}S(\hat{R}) $.
\end{itemize}
We denote the final estimator as $ \hat{\mu }_{\mathrm{unbounded}} $. 

We now state the stopping conditions for the first case. Here we slightly overload the notation $ r_J $ with $ d $ replaced by $ d_m $:
\begin{align*}
  r_J:=\dfrac{ d_m }{ 2^{J-1}(C+1) },\quad d_m := 2m+2R_\varepsilon = \frac{2c}{c-2}R_\varepsilon .  
\end{align*}
The stopping conditions are
\begin{itemize}[topsep=2pt,itemsep=0pt]
  \item[\textbf{A}] $ Q_0a_\varepsilon Nr_J^2/\sigma ^2 \geq 6\log \mloc(\frac{c}{2}r_J,c) \vee \log 2 $;
  \item[\textbf{B}]$ r_J \geq D_1\varepsilon \sigma \sqrt{z_\varepsilon } $.
\end{itemize}
Let $ J^A $ and $ J^B $ be the largest positive integers such that the conditions \textbf{A} and \textbf{B} are satisfied, respectively; if the corresponding condition is never satisfied, we set the index to be $ 1 $. Then we let the stopping step be $ J^* := J^A \wedge J^B $. As a result, the estimator is
\begin{align*}
  \hat{\mu }_{\mathrm{unbounded}} = \begin{cases}
    \Y_{J^*}^s, & S(R_\varepsilon ) \neq \varnothing ,\\
    \min_{\text{lex}}S(\hat{R}), & S(R_\varepsilon ) = \varnothing .
  \end{cases}
\end{align*}

\begin{lemma}
  If $ \varepsilon \in[1/4,1/2) $, then for any $ J $ such that both conditions \textbf{A} and \textbf{B} are satisfied, we have
  \begin{align*}
     \mathbbm{E}_{  }\left[ \left\Vert \hat{\mu }_{\mathrm{unbounded}} -\mu  \right\Vert ^2  \right] \lesssim r_J^2.
  \end{align*}
\end{lemma}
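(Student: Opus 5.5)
We split the risk according to the localization event and write
\begin{align*}
  \mathbbm{E}\left[ \left\Vert \hat{\mu }_{\mathrm{unbounded}} -\mu  \right\Vert ^2 \right] = \mathbbm{E}\left[ \left\Vert \hat{\mu }-\mu \right\Vert ^2\mathbbm{1}(\mu \in S(R_\varepsilon )) \right] + \mathbbm{E}\left[ \left\Vert \hat{\mu }-\mu \right\Vert ^2\mathbbm{1}(\mu \notin S(R_\varepsilon )) \right].
\end{align*}
We then bound the two terms separately by $r_J^2$.

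\textbf{First term.} On $\{\mu \in S(R_\varepsilon )\}$ we have $S(R_\varepsilon )\neq \varnothing$, so the tree branch is used. Since $\operatorname{diam}(S(R_\varepsilon ))\leq 2R_\varepsilon$ and $S_m$ is a $2m$-covering of $K$, the selected root $s$ satisfies $\left\Vert s-\mu \right\Vert \leq 2m+2R_\varepsilon = d_m$. Hence $\mu \in B(s,d_m)\cap K$, which is exactly the region on which $\mathcal{G}^s$ is built with scale $d_m$.

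The problem is now the bounded problem of \autoref{appendix:minimax_upper_bound_bounded_case} with $d$ replaced by $d_m$. The counterpart of \autoref{lemma:pruned_directed_tree_properties} holds for $\mathcal{G}^s$. Conditions \textbf{A} and \textbf{B} hold at every level $j\leq J$ by monotonicity (\autoref{lemma:useful_properties}[(i)]), so \autoref{lemma:robust_pairwise_testing_near_breakdown_point} applies to every comparison along the chain.

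The one new point is that the root $s$ is data-dependent. We handle this by a union bound over the candidate roots $s\in S_m$ with $\left\Vert s-\mu \right\Vert \leq d_m$. Their number is controlled by a local packing number at scale comparable to $r_1$, since $S_m$ is an $m$-packing with $m\asymp d_m/c$. By monotonicity this is at most $\mloc(\frac{c}{2}r_j,c)$ for every $j$. This gives a third packing factor in the tree union bound, which is why condition \textbf{A} carries the constant $6$ instead of $4$:
\begin{align*}
  \mloc(\tfrac{c}{2}r_j,c)^3\exp\left[ -Q_0a_\varepsilon Nr_j^2/\sigma ^2 \right]\leq \exp\left[ -Q_0a_\varepsilon Nr_j^2/(2\sigma ^2) \right].
\end{align*}
The tail integration of \autoref{lemma:robust_tournament_on_tree_near_breakdown_point_bounded_case} then bounds the first term by $r_J^2+\frac{\sigma ^2}{a_\varepsilon N}e^{-Q_0a_\varepsilon Nr_J^2/(2\sigma ^2)}\lesssim r_J^2$, using $Q_0a_\varepsilon Nr_J^2/\sigma ^2\geq \log 2$.

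\textbf{Second term (the main obstacle).} Here we need a pointwise bound on $\left\Vert \hat{\mu }-\mu \right\Vert$ in terms of a data-driven radius. Let $T:=\inf\{t\geq 0:\mu \in S(t)\}$. The sets $S(t)$ are nested, and $\mu \in S(t)$ for all $t>T$.

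In the tree case, $S(R_\varepsilon)\neq\varnothing$ but $\mu\notin S(R_\varepsilon)$, so $T\geq R_\varepsilon$. The estimator lies within $O(d_m)\lesssim R_\varepsilon\leq T$ of $s$, using \autoref{lemma:pruned_directed_tree_properties}(iv). Moreover $s$ is within $2m\lesssim R_\varepsilon$ of $S(R_\varepsilon)$. For $t>T$, both $S(R_\varepsilon)$ and $\mu$ lie in $S(t)$, and $\operatorname{diam}(S(t))\leq 2t$. Combining these gives $\left\Vert \hat{\mu }-\mu \right\Vert \lesssim T$.

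In the empty case, $\hat{\mu }\in S(\hat{R})$ with $\hat{R}\leq T$. Both $\hat{\mu }$ and $\mu$ lie in $S(t)$ for $t>T$, so $\left\Vert \hat{\mu }-\mu \right\Vert \leq 2T$.

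In both cases $\left\Vert \hat{\mu }-\mu \right\Vert ^2\lesssim T^2\mathbbm{1}(T\geq R_\varepsilon )$. By \autoref{equation:localization_step_success_probability}, which is valid for all $t\geq R_\varepsilon$, we get
\begin{align*}
  \mathbbm{E}\left[ T^2\mathbbm{1}(T\geq R_\varepsilon ) \right]\leq R_\varepsilon ^2 e^{-\lambda R_\varepsilon ^2}+\int_{R_\varepsilon }^\infty 2t\,e^{-\lambda t^2}dt \lesssim \Big(R_\varepsilon ^2+\frac{1}{\lambda}\Big)e^{-\lambda R_\varepsilon ^2},\qquad \lambda :=\frac{Na_\varepsilon \gamma _\varepsilon }{64\sigma ^2}.
\end{align*}
Note that $\lambda R_\varepsilon ^2\gtrsim Q_5 Nn$ and $1/\lambda \asymp \sigma ^2/(a_\varepsilon N)$.

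It remains to compare this with $r_J^2$. Condition \textbf{A} gives $r_J^2\gtrsim \sigma ^2/(a_\varepsilon N)$, which absorbs the $1/\lambda$ part directly. For the $R_\varepsilon ^2e^{-\lambda R_\varepsilon ^2}$ part, write $R_\varepsilon ^2\lambda \asymp N\cdot(\text{large})$. Then $R_\varepsilon ^2e^{-\lambda R_\varepsilon ^2}\lesssim \lambda ^{-1}\cdot(\lambda R_\varepsilon ^2)e^{-\lambda R_\varepsilon ^2}\lesssim \lambda ^{-1}$, since $xe^{-x}\leq 1$. So this part is also $\lesssim \sigma ^2/(a_\varepsilon N)\lesssim r_J^2$.

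The delicate point is that all constants stay $\varepsilon$-free. This relies on $\gamma _\varepsilon \in[1/2,1)$ and on the exponent scaling exactly as $a_\varepsilon$. Summing the two terms gives the claim.
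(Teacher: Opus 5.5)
Your proof is correct, but it handles the localization step by a different route than the paper.

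\textbf{How the paper splits the risk.} The paper splits on $\{S(R_\varepsilon)\neq\varnothing\}$ versus $\{S(R_\varepsilon)=\varnothing\}$. In the tree branch it folds the localization failure into the tail bound: this is the $2\exp[-Na_\varepsilon\gamma_\varepsilon R_\varepsilon^2/(64\sigma^2)]$ term, absorbed via $Q_6$. It then controls the large-$t$ regime by nesting plus the localization bound. In the empty branch it runs a dyadic peeling over $\hat R$ with conditional risk bounds. The whole structure mirrors Parts I--VI of the original Lemma D.2.

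\textbf{How your split differs.} You split on $\{\mu\in S(R_\varepsilon)\}$ instead. The tree analysis then only sees the good event, where $\|s-\mu\|\le d_m$ holds deterministically. On the complement you prove the pointwise bound $\|\hat\mu-\mu\|\lesssim T$, with $T=\inf\{t:\mu\in S(t)\}\ge R_\varepsilon$. This bound holds for both branches of the estimator, using only the nesting of $S(t)$ and $\operatorname{diam}S(t)\le 2t$. A single tail integral of $T$ against the localization bound then gives $\lesssim\sigma^2/(a_\varepsilon N)\lesssim r_J^2$.

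\textbf{What each route buys.} Your route replaces both the large-$t$ part of the tree analysis and the entire peeling argument. It also avoids the paper's somewhat loose conditioning on $\{S(R_\varepsilon)\neq\varnothing\}$, since you work with indicators throughout. The paper's route, in exchange, maps line by line onto the original proof, so only the $\varepsilon$-dependent constants need tracking.

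\textbf{The constant in condition \textbf{A}.} You read the constant $6$ as the third packing factor from the union bound over candidate roots. This is consistent with the paper's constants. Note that $d_m=cm$, so $|S_m\cap B(\mu,d_m)|\le\mloc(d_m,c)$. The further bound $\mloc(d_m,c)\le\mloc(\frac c2 r_j,c)$ holds only for $j\ge 2$, because $\frac c2 r_1=2d_m$. That suffices, since tournaments occur only at levels $j\ge2$.

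\textbf{One step to make explicit.} Your first-term bound is stated for $\Y^s_J$, but $\hat\mu=\Y^s_{J^*}$. Either of the following closes this:
\begin{itemize}
\item since both conditions hold at $J$, you have $J\le J^*$, and \autoref{lemma:pruned_directed_tree_properties}[(iv)] gives $\|\Y^s_{J^*}-\Y^s_J\|\le\frac{4C+5}{2}r_J$;
\item or apply your bound at the deterministic index $J^*$, where both conditions still hold by monotonicity, and use $r_{J^*}\le r_J$.
\end{itemize}
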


\begin{proof}
  The proof is divided into two parts according to whether $ S(R_\varepsilon ) $ is empty, following the idea of \cite{prasadan_information_2026}[Lemma D.1, Lemma D.2]. In their proof of Lemma D.2, Parts I-IV correspond to the case $ S(R_\varepsilon ) \neq \varnothing $ and Part V corresponds to the case $ S(R_\varepsilon ) = \varnothing $; Part VI combines the two contributions to give the final risk bound. We refer the readers to their proof for the details of the tree traversal and tail probability calculation since the geometric argument is unchanged, and we have calibrated the involved $ \varepsilon  $-dependent parameters. The main steps are summarized below.

  We begin with a counterpart of their Lemma D.1 that gives the tail probability of the tree traversal with constants uniform
  over $\varepsilon\in[1/4,1/2)$. The difference from the bounded case is that we now need to account for the data-dependent choice of the root $ s $. This requires an extra probability bound to address the localization step, which is given in \autoref{equation:localization_step_success_probability}. Repeating the same steps gives
  \begin{align*}
    \mathbbm{P}_{  }\left( \left\Vert \Y_J^s - \mu  \right\Vert > (C+1)r_J | S(R_\varepsilon ) \neq \varnothing \right) \leq& 2\exp\left[ -\frac{Na_\varepsilon\gamma_\varepsilon R_\varepsilon^2}{64\sigma^2} \right] + 4\exp\left[ -\frac{Q_0a_\varepsilon Nr_J^2}{2\sigma^2} \right]\\
    \lesssim & \exp\left[ -\frac{Q_6a_\varepsilon Nr_J^2}{\sigma^2} \right],
  \end{align*}
  where $ Q_6 $ is a universal constant, for example we can set $ Q_6 = \frac{Q_0}{2} \wedge \frac{(2C+1)^2}{2048} $. Since $ r_J\leq r_1 = 4R_\varepsilon /(2C+1) $ and $ \gamma _\varepsilon \geq 1/2 $ the inequality holds for all $ \varepsilon \in [1/4,1/2) $. 
  
  The $ S(R_\varepsilon ) \neq \varnothing $ case, where the estimator is $ \Y^s_{J^*} $, has the same structure as \autoref{lemma:robust_tournament_on_tree_near_breakdown_point_bounded_case} by assessing the tail probability of the tree traversal and then integrating the tail. To obtain the conditional expected risk, we apply the preceding tail bound at every level $j\leq J$ and follow Parts I-IV of \cite{prasadan_information_2026}[Lemma D.2]. Specifically, there are two scales involved in the integration (say, the integration index is $ t $): the tree argument controls the intermediate $ r_J\lesssim t \lesssim R_\varepsilon  $ regime; the nesting property and the localization bound \autoref{equation:localization_step_success_probability} control the large $ t\gtrsim R_\varepsilon  $ regime. The calculation yields the following: for any $ J $ such that both conditions \textbf{A} and \textbf{B} are satisfied, we have
  \begin{align}\label{equation:unbounded_case_upper_bound_case_1}
    \mathbbm{E}_{  }\left[ \left\Vert \Y^s_{J} - \mu  \right\Vert ^2 | S(R_\varepsilon ) \neq \varnothing \right] \lesssim r_J^2 + \frac{\sigma ^2}{a_\varepsilon N}\exp\left[ -Q_6 a_\varepsilon N \frac{r_J^2}{\sigma ^2} \right] \lesssim r_J^2,
  \end{align}
  where the last inequality follows from condition \textbf{A}.
  Since both stopping conditions hold at $J$, we have $J\le J^*$. By \autoref{lemma:pruned_directed_tree_properties}[(iv)] that $ \left\Vert \Y_{J^*}^s - \Y_J^s \right\Vert \leq\frac{4C+5}{2}r_J $ as long as $ J\leq J^* $, we obtain
  \begin{align*}
     \mathbb E\left[ \left\Vert \Y_{J^*}^s - \mu  \right\Vert ^2 | S(R_\varepsilon ) \neq \varnothing \right] \lesssim r_J^2.
  \end{align*}
  
  The $ S(R_\varepsilon ) = \varnothing $ case involves a peeling argument to break the event $ S(R_\varepsilon ) = \varnothing $ into a disjoint union:
  \begin{align*}
    \{S(R_\varepsilon )= \varnothing\} = \{ \hat{R} > R_\varepsilon \}= \uplus_{l=1}^{\infty }\{ 2^{l-1}R_\varepsilon < \hat{R} \leq 2^lR_\varepsilon \}.
  \end{align*}
  Then we let $ p_l := \mathbbm{P}_{  }\left( 2^{l-1}R_\varepsilon < \hat{R} \leq 2^lR_\varepsilon \right) $ and apply the localization tail probability \autoref{equation:localization_step_success_probability} to obtain
  \begin{align}\label{equation:p_l_tail_bound}
    p_l \leq& \mathbbm{P}\bigl(\hat R>2^{l-1}R_\varepsilon\bigr)\notag \\
    \leq& \mathbbm{P}\bigl(\mu\notin S(2^{l-1}R_\varepsilon)\bigr) \notag \\
    \leq& \exp\left[ -\frac{Na_\varepsilon \gamma _\varepsilon }{64}\frac{4^{l-1}R_\varepsilon ^2}{\sigma ^2} \right]:= \exp(- t_l),
  \end{align}
  where we remind the readers that $ t_l \geq t_1 > 1 $ by \autoref{equation:localization_step_radius_calibration}. Then following the same integration argument gives a counterpart of \cite{prasadan_information_2026}[Equation (D.8)]:
  \begin{align*}
    \mathbbm{E}_{  }\left[ \left\Vert \hat{\mu }_{\mathrm{unbounded}} - \mu  \right\Vert ^2 | 2^{l-1}R_\varepsilon < \hat{R} \leq 2^lR_\varepsilon \right] \lesssim 4^{l-1}R_\varepsilon ^2 \vee \frac{\sigma ^2}{Na_\varepsilon \gamma _\varepsilon }\log\frac{1}{p_l} + \frac{\sigma ^2}{Na_\varepsilon \gamma _\varepsilon }.
  \end{align*}
  Substituting the tail bound of $ p_l $ gives
  \begin{align}\label{equation:unbounded_case_upper_bound_case_2}
    \mathbbm{E}_{  }\left[ \left\Vert \hat{\mu }_{\mathrm{unbounded}} - \mu  \right\Vert ^2 \mathbbm{1}\big(S(R_\varepsilon ) = \varnothing\big) \right] =& \sum_{l=1}^{\infty } p_l\mathbbm{E}_{  }\left[ \left\Vert \hat{\mu }_{\mathrm{unbounded}} - \mu  \right\Vert ^2 | 2^{l-1}R_\varepsilon < \hat{R} \leq 2^lR_\varepsilon \right] \notag\\
    \lesssim &\sum_{l=1,p_l>0}^{\infty } p_l \left( 4^{l-1}R_\varepsilon ^2 \vee \frac{\sigma ^2}{Na_\varepsilon \gamma _\varepsilon }\log\frac{1}{p_l} \right) + \frac{\sigma ^2}{Na_\varepsilon \gamma _\varepsilon } \notag\\   
    \mathop{ \lesssim }\limits^{(i)} &  \frac{\sigma ^2}{Na_\varepsilon \gamma _\varepsilon }\sum_{l=1}^{\infty } t_l\exp(-t_l) + \frac{\sigma ^2}{Na_\varepsilon \gamma _\varepsilon } \notag\\
    \mathop{ \lesssim }\limits^{(ii)}  &\frac{\sigma ^2}{Na_\varepsilon \gamma _\varepsilon }\asymp \frac{\sigma ^2}{Na_\varepsilon },
  \end{align}
  where $ (i) $ by rearranging \autoref{equation:p_l_tail_bound} to see that $ p_l4^{l-1}R_\varepsilon ^2 \leq 64\sigma ^2/(Na_\varepsilon \gamma _\varepsilon )t_l\exp(-t_l) $; also  we use the monotonicity of $ x\mapsto x\log(1/x) $ for $ x\in(0,1/e) $ and thus $ p_l\log(1/p_l) \leq t_l\exp(-t_l) $ for all $ l\geq 1 $. $ (ii) $ since $ t_l $ is a geometrically increasing sequence.
  
  Finally, we combine the two cases \autoref{equation:unbounded_case_upper_bound_case_1} and \autoref{equation:unbounded_case_upper_bound_case_2}. Note that condition (\textbf{A}) implies that $ r_J^2\gtrsim \frac{\sigma ^2}{Na_\varepsilon } $. Thus we have
  \begin{align*}
     \mathbbm{E}_{  }\left[ \left\Vert \hat{\mu }_{\mathrm{unbounded}} - \mu  \right\Vert ^2 \right] \lesssim r_{J}^2 + \frac{\sigma ^2}{Na_\varepsilon } \lesssim r_J^2.
  \end{align*}
\end{proof}

\begin{lemma}
  If $ \varepsilon \in[1/4,1/2) $, then $ \hat{\mu }_{\mathrm{unbounded}} $ satisfies
  \begin{align*}
    \mathop{ \sup }\limits_{\mu \in K} \mathop{ \sup }\limits_{\mathcal{C}} \mathbbm{E}_{ \mu ,\mathcal{C} } \left[ \left\Vert \hat{\mu }_{\mathrm{unbounded}} - \mu  \right\Vert ^2 \right] \lesssim  r_{J^A}^2 \vee \sigma ^2\varepsilon ^2\log(1/(1-2\varepsilon )) .
  \end{align*}
\end{lemma}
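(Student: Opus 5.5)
The plan mirrors the bounded-case argument: apply the preceding lemma at the stopping index $J^*=J^A\wedge J^B$, and separately handle the edge cases in which one of the stopping conditions fails already at the root level $J=1$. Throughout, $r_J$ denotes the overloaded scale $r_J=d_m/(2^{J-1}(C+1))$ with $d_m=\frac{2c}{c-2}R_\varepsilon$.

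\textbf{Main case.} Suppose both conditions \textbf{A} and \textbf{B} hold at $J=1$. By the monotonicity of $J\mapsto r_J$ and of the local entropy (\autoref{lemma:useful_properties}[(i)]), both conditions then hold for every $j\le J^*$. In particular they hold at $J^*$, so the preceding lemma gives $\mathbb{E}\|\hat\mu_{\mathrm{unbounded}}-\mu\|^2\lesssim r_{J^*}^2$ uniformly in $\mu\in K$ and in the adversary $\mathcal{C}$. Since $r_{J^*}=r_{J^A}\vee r_{J^B}$, it remains to bound $r_{J^B}$. Condition \textbf{B} fails at $J^B+1$, hence
\[
r_{J^B}=2r_{J^B+1}<2D_1\varepsilon\sigma\sqrt{z_\varepsilon}.
\]
For $\varepsilon\ge 1/4$ we have $z_\varepsilon\asymp\log(1/(1-2\varepsilon))$, which gives $r_{J^*}^2\lesssim r_{J^A}^2\vee\sigma^2\varepsilon^2\log(1/(1-2\varepsilon))$.

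\textbf{Edge cases at $J=1$.} Here there is no trivial $d^2$ bound, and this is the main obstacle. The estimator is $\Y_1^s=s$ when $S(R_\varepsilon)\ne\varnothing$, and $\min_{\mathrm{lex}}S(\hat R)$ otherwise. I would reuse the two scales from the proof of the previous lemma.
\begin{itemize}[topsep=2pt,itemsep=0pt]
\item On $\{S(R_\varepsilon)\neq\varnothing,\ \mu\in S(R_\varepsilon)\}$, the bound $\operatorname{diam}S(R_\varepsilon)\le 2R_\varepsilon$ and the choice of $s$ closest to $S(R_\varepsilon)$ with covering radius $2m$ give $\|s-\mu\|\lesssim R_\varepsilon+m\asymp R_\varepsilon\asymp r_1$.
\item On the complement, and on $\{S(R_\varepsilon)=\varnothing\}$, the localization tail \eqref{equation:localization_step_success_probability}, combined with the peeling bound \eqref{equation:unbounded_case_upper_bound_case_2}, contributes at most $R_\varepsilon^2+\sigma^2/(Na_\varepsilon)$.
\end{itemize}
Altogether this yields $\mathbb{E}\|\hat\mu_{\mathrm{unbounded}}-\mu\|^2\lesssim r_1^2+\sigma^2/(Na_\varepsilon)$.

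It then suffices to show $r_1^2\vee\sigma^2/(Na_\varepsilon)$ is dominated by the target rate in each failure case.
\begin{itemize}[topsep=2pt,itemsep=0pt]
\item If \textbf{B} fails at $J=1$, then $r_1\lesssim\varepsilon\sigma\sqrt{z_\varepsilon}$.
\item If \textbf{A} fails at $J=1$, then $J^A=1$ and $r_1=r_{J^A}$.
\end{itemize}
To absorb $\sigma^2/(Na_\varepsilon)$, note that $r_J\le r_1$ and that \textbf{A} failing at $J^A+1$ forces either $Q_0a_\varepsilon Nr_{J^A+1}^2/\sigma^2<\log 2$ or a large entropy. In the former case $\sigma^2/(Na_\varepsilon)\gtrsim r_{J^A}^2$ directly. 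I would also argue that $\sigma^2/(Na_\varepsilon)\lesssim r_{J^A}^2$: since $K$ is unbounded, \autoref{lemma:useful_properties}[(iii)] gives $\mloc\ge 2$ for $c$ large, so condition \textbf{A} at any satisfied level already forces $r^2\gtrsim \sigma^2/(Na_\varepsilon)$. If the index is set to $1$ by default, then $r_1^2\asymp R_\varepsilon^2\gtrsim n\sigma^2/a_\varepsilon\ge\sigma^2/(Na_\varepsilon)$ by \eqref{equation:localization_step_radius_calibration}.

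The step I would double-check most carefully is that the localization failure contribution is genuinely of order $R_\varepsilon^2\asymp r_1^2$ uniformly in $\varepsilon$. This should follow from $\gamma_\varepsilon\ge 1/2$ and from $t_1\ge 1$, since the latter is guaranteed by the calibration of $R_\varepsilon$.
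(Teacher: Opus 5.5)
Your proof is correct, and your main case is exactly the paper's argument: apply the preceding lemma at $J^*$, then use the failure of \textbf{B} at $J^B+1$ to get $r_{J^B}<2D_1\varepsilon\sigma\sqrt{z_\varepsilon}$. The difference is the edge-case branch. The paper never needs it, because it shows at the outset that both conditions always hold at the root, so the edge cases cannot occur.

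By \eqref{equation:localization_step_radius_calibration}, $r_1^2=\frac{16}{(2C+1)^2}R_\varepsilon^2\gtrsim Q_5 n\sigma^2/a_\varepsilon$. Condition \textbf{A} at $J=1$ then holds because $Q_0a_\varepsilon Nr_1^2/\sigma^2\gtrsim Q_5 Nn$, while $6\log\mloc(\tfrac{c}{2}r_1,c)\vee\log 2\lesssim n$ by \autoref{lemma:useful_properties}[(ii)]. Condition \textbf{B} at $J=1$ holds because $r_1^2\gtrsim Q_5\sigma^2/a_\varepsilon\gtrsim\sigma^2\varepsilon^2 z_\varepsilon$, using $a_\varepsilon z_\varepsilon\lesssim 1$. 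Both inequalities go through once the universal constant $Q_5$ is large.

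You had the key inequality $r_1^2\gtrsim n\sigma^2/a_\varepsilon$ in hand at the end, but used it only to absorb $\sigma^2/(Na_\varepsilon)$. Applied to \textbf{A} and \textbf{B} directly, it makes your root-level risk bound vacuous. That bound (localization tail plus nesting on $\{S(R_\varepsilon)\neq\varnothing\}$, peeling on $\{S(R_\varepsilon)=\varnothing\}$) is valid. What it buys is robustness: it would still work under a weaker calibration of $R_\varepsilon$, needing only $R_\varepsilon^2\gtrsim\sigma^2/(Na_\varepsilon)$ and $t_1\ge 1$. The paper's route is shorter because it spends the calibration once.

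One small point in your write-up: the remark that $Q_0a_\varepsilon Nr_{J^A+1}^2/\sigma^2<\log 2$ gives $\sigma^2/(Na_\varepsilon)\gtrsim r_{J^A}^2$ is the wrong direction for absorbing $\sigma^2/(Na_\varepsilon)$. It plays no role and should be deleted. The correct absorption comes from the $\vee\log 2$ in condition \textbf{A}, together with the calibration in the default-index case, as you then argue.
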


\begin{proof}
  We first check that both conditions \textbf{A} and \textbf{B} are satisfied at the root $ J=1 $. Indeed, we have $ r_1 = \frac{d_m}{C+1} = \frac{4}{2C+1}R_\varepsilon  $. By \autoref{equation:localization_step_radius_calibration}, we have $ r_1^2\gtrsim n\sigma ^2/a_\varepsilon   $. Since $ n\gtrsim \log\mloc(\frac{c}{2}r_1,c) $ by \autoref{lemma:useful_properties}[(ii)] and $ a_\varepsilon z_\varepsilon \lesssim 1 $, by taking $ Q_5 $ sufficiently large, we must have both conditions satisfied at the root, and hence at $ J^* $. As a result, the previous lemma applies, and we have
  \begin{align*}
    \mathop{ \sup }\limits_{\mu \in K} \mathop{ \sup }\limits_{\mathcal{C}} \mathbbm{E}_{ \mu ,\mathcal{C} } \left[ \left\Vert \hat{\mu }_{\mathrm{unbounded}} - \mu  \right\Vert ^2 \right] \lesssim r_{J^*}^2.  
  \end{align*}
  At step $ J^B+1 $ condition \textbf{B} fails, which gives $ r_{J^B}/2 = r_{J^B+1} < D_1\varepsilon \sigma \sqrt{z_\varepsilon } $. Since $ J^* = J^A \wedge J^B $, we have $ r_{J^*}^2 = r_{J^A}^2 \vee r_{J^B}^2 \lesssim r_{J^A}^2 \vee \sigma ^2\varepsilon ^2\log(1/(1-2\varepsilon )) $. Substituting this into the previous display gives the desired result.
  
\end{proof}

\begin{proof}[Proof of \autoref{theorem:minimax_upper_bound_unbounded}]
  The regime $ \varepsilon \in [0,1/4) $ follows from \cite{prasadan_information_2026}[Theorem 5.9] and the rate matching argument as in \autoref{proposition:agreement_with_existing_result}. For $ \varepsilon \in [1/4,1/2) $, the logic is similar to that in the proof of \autoref{theorem:minimax_upper_bound_bounded}. Here since $ K $ is unbounded, the argument in Case 1 there does not apply as $ K $ contains arbitrarily long line segments, meaning that we always have $ \kappa _\varepsilon := a_\varepsilon N\delta _\varepsilon ^{*2}/\sigma ^2 $ large enough and hence the discussion falls into Case 2. Then it suffices to complete an analogue of Case 2 of the proof, arguing that $ r_{J^A} \lesssim \delta _\varepsilon ^* $. The details are omitted since the argument is identical. Gathering the two cases gives the desired result for all $ \varepsilon \in [0,1/2) $.
\end{proof}

\end{document}